\newtheorem{theorem}{Theorem}
\newtheorem{lemma}[theorem]{Lemma}
\newtheorem{corollary}[theorem]{Corollary}
\newtheorem{proposition}[theorem]{Proposition}
\newtheorem{example}[theorem]{Example}
\newtheorem{problem}[theorem]{Problem}
\newcommand{\tto}{\twoheadrightarrow}
\font\sc=rsfs10
\newcommand{\cQ}{\sc\mbox{Q}\hspace{1.0pt}}
\newcommand{\cC}{\sc\mbox{C}\hspace{1.0pt}}
\newcommand{\cP}{\sc\mbox{P}\hspace{1.0pt}}
\font\scc=rsfs7
\newcommand{\ccC}{\scc\mbox{C}\hspace{1.0pt}}
\newcommand{\ccP}{\scc\mbox{P}\hspace{1.0pt}}
\newcommand{\ccQ}{\scc\mbox{Q}\hspace{1.0pt}}
\begin{document}

\title[$2$-categories associated with dual projection functors]{Finitary $2$-categories 
associated with\\ dual projection functors}
\author{Anna-Louise Grensing and Volodymyr Mazorchuk}

\begin{abstract}
We study finitary $2$-categories associated to dual projection functors for
finite dimensional associative algebras. In the case of path algebras of admissible tree quivers 
(which includes all Dynkin quivers of type $A$)
we show that the monoid generated by dual projection functors is the Hecke-Kiselman monoid of 
the underlying quiver and also obtain a presentation for the monoid of indecomposable subbimodules 
of the identity bimodule. 
\end{abstract}

\maketitle

\section{Introduction}\label{s1}

Study of $2$-categories of additive functors operating on a module category of a finite dimensional
associative algebra is motivated by recent advances and applications of categorification philosophy,
see \cite{CR,Ro,KhLa,Ma} and references therein. Such $2$-categories appear as natural $2$-analogues 
of finite dimensional algebras axiomatized via the notion of {\em finitary} $2$-categories 
as introduced in \cite{MM1}. The series \cite{MM1,MM2,MM3,MM5,MM6} of papers develops basics of the structure 
theory and the $2$-representation theory for the so-called {\em fiat $2$-categories}, that is finitary 
$2$-categories having a weak involution and adjunction morphisms. Natural examples of such fiat 
$2$-categories are $2$-categories generated by {\em projective} functors, that is functors given by
tensoring with projective bimodules, see \cite[Subsection~7.3]{MM1}. Fiat $2$-categories also naturally 
appear as quotients of $2$-Kac-Moody algebras from \cite{KhLa,Ro,We}, see \cite[Subsection~7.1]{MM2} 
and \cite[Subsection~7.2]{MM5} for detailed explanations. There are also many natural constructions which 
produce new fiat $2$-categories from known ones, see e.g. \cite[Section~6]{MM6}.

Despite of some progress made in understanding fiat $2$-categories in the papers mentioned above, 
the general case of finitary $2$-categories remains very mysterious with the only general result 
being the abstract $2$-analogue of the Morita theory developed in \cite{MM4}. One of the major difficulties 
is that so far there are not that many natural 
examples of finitary $2$-categories which would be ``easy enough'' for any kind of sensible understanding.
In \cite{GrMa}, inspired by the study of the so-called {\em projection} functors in \cite{Gr,Pa},
we defined a finitary $2$-category which is a natural $2$-analogue of the semigroup algebra of the
so-called {\em Catalan monoid} of all order-decreasing and order-preserving transformations of a finite chain.
This $2$-category is associated to the path algebra of a type $A$ Dynkin quiver with a fixed uniform 
orientation (meaning that all edges are oriented in the same direction).

The main aim of the present paper is to make the next step and consider a similarly defined $2$-category
for an arbitrary orientation of a type $A$ Dynkin quiver and, more generally, for any admissible orientation
of an arbitrary tree quiver. There is one important difference, which we will  now explain, between 
this general case and the case of a uniform orientation in type $A$. Basic structural properties
of a finitary $2$-category are encoded in the so-called {\em multisemigroup} of this $2$-category as defined
in \cite[Subsection~3.3]{MM2}. Elements of this multisemigroup are isomorphism classes of {\em indecomposable}
$1$-morphisms in our $2$-category. It turns out that for a uniform orientation of a type $A$ Dynkin quiver
any composition of projection functors is either indecomposable or zero. This fails in all other cases
in which the orientation is not uniform as well as for all admissible tree quivers outside type $A$. 
This is the principal added difficulty of the present paper compared to \cite{GrMa}.

For technical reasons it turns out that it is more convenient to work with a dual version of projection
functors, which we simply call {\em dual projection functors}. Roughly speaking these are the right exact 
functors given by maximal subfunctors of the identity functor. The first part of the paper is devoted to 
some basic structure theory for such functors. This is developed in Section~\ref{s3} after various preliminaries
collected in Section~\ref{s2}. In particular, in Proposition~\ref{prop8} we make the connection between
projection and dual projection functors very explicit. This, in particular, allows us to transfer, for free, 
many results of \cite{Gr,Pa} to our situation. 

Section~\ref{s4} contains basic preliminaries on $2$-categories. In Section~\ref{s5} we define finitary
$2$-categories given by dual projection functors and also finitary $2$-categories given by non-exact ancestors 
of  dual projection functors which we call {\em idealization functors}.  Section~\ref{s7} is the main part of
the paper and contains several results. This includes a classification of indecomposable dual projection functors
in Theorem~\ref{thm31} and also the statement that composition of indecomposable dual projection 
functors for any admissible orientation of a tree quiver is indecomposable, see Proposition~\ref{prop32}.
Our classification is based on a generalization of the Dyck path combinatorics in application to subbimodules
of the identity bimodule for admissible tree quivers as described in Subsections~\ref{s7.3}, \ref{s7.4}, 
\ref{s7.5} and \ref{s7.7}.

Proposition~\ref{prop32} mentioned above implies that the multisemigroup of the $2$-category of dual projection 
functors associated to any admissible orientation of a tree quiver is, in fact, an ordinary semigroup. 
This observation automatically makes this semigroup an interesting object of study.
In Section~\ref{s8} we give a presentation for this semigroup in Theorem~\ref{thm55}
and also for the semigroup of all idealization functors in Theorem~\ref{thm54}. Our proof of 
Theorem~\ref{thm54} is rather elegant, it exploits the idea of decategorification: the canonical
action of our $2$-category on the underlying module category gives rise to a linear representation of 
a certain Hecke-Kiselman monoid from \cite{GM2}. Proof of Theorem~\ref{thm54} basically reduces to verification 
that this representation is effective (in the sense that different elements of the monoid are represented
by different linear transformations). This effectiveness was conjectured in \cite{GM2} and proved in \cite{Fo}.
Theorem~\ref{thm55} requires more technical work as the monoid of indecomposable dual projection functors is
not a Hecke-Kiselman monoid on the nose, but after some preparation it also reduces to a similar argument.
\vspace{5mm}

\noindent
{\bf Acknowledgment.} 
The first author is supported by the priority program SPP 1388 of the German Science Foundation.
The second author is partially supported by the Swedish Research Council,
Knut and Alice Wallenbergs Stiftelse and the Royal Swedish Academy of Sciences.
We thank the referee for a very careful reading of the original manuscript and many
useful comments and suggestions which corrected several inaccuracies and substantially improved exposition. 
\vspace{20mm}

\section{Preliminaries}\label{s2}

\subsection{Notation and setup}\label{s2.1}

In this paper we work over a fixed field $\Bbbk$ which for simplicity is assumed to be algebraically closed.
All categories and functors considered in this paper are supposed to be $\Bbbk$-linear, that is enriched over
$\Bbbk\text{-}\mathrm{Mod}$. If not explicitly stated otherwise, by a module we always mean a {\em left} module.

For a finite dimensional associative $\Bbbk$-algebra
$A$ we denote by $A\text{-}\mathrm{mod}$ the (abelian) category of all finitely generated $A$-modules.
By $A\text{-}\mathrm{Mod}$  we denote the (abelian) category of all $A$-modules.
We also denote by $A\text{-}\mathrm{proj}$ the (additive) category of all finitely generated projective $A$-modules
and by $A\text{-}\mathrm{inj}$ the (additive) category of all finitely generated injective $A$-modules.

We denote by $\mathrm{mod}\text{-}A$ the category of all finitely generated right $A$-modules and define
$\mathrm{proj}\text{-}A$ and $\mathrm{Mod}\text{-}A$ respectively.

We denote by $A\text{-}\mathrm{mod}\text{-}A$ the category of all finitely generated $A\text{-}A$--bimodules.
Denote by $\mathcal{AF}_{A}$ the category of all additive $\Bbbk$-linear endofunctors of 
$A\text{-}\mathrm{mod}$. This is an abelian category since $A\text{-}\mathrm{mod}$ is abelian.

Abusing notation, we write $*$ for both the $\Bbbk$-duality functors 
\begin{displaymath}
\mathrm{Hom}_{\text{-}\Bbbk}({}_-,\Bbbk):A\text{-}\mathrm{mod}\to  \mathrm{mod}\text{-}A\quad\text{ and }\quad
\mathrm{Hom}_{\Bbbk\text{-}}({}_-,\Bbbk):\mathrm{mod}\text{-}A\to  A\text{-}\mathrm{mod}.
\end{displaymath}

Let $L_1,L_2,\dots,L_n$ be a complete and irredundant list of representatives of isomorphism classes
of simple $A$-modules. Then $L^{*}_1,L^{*}_2,\dots,L^{*}_n$ is a complete and irredundant list of 
representatives of isomorphism classes of simple right $A$-modules. For $i,j=1,2,\dots,n$, set
$L_{ij}:=L_i\otimes_{\Bbbk}L^{*}_j$. This gives a complete and irredundant list of 
representatives of isomorphism classes of simple $A\text{-}A$--bimodules.
For $i=1,2,\dots,n$ we denote by $P_i$ and $I_i$ the indecomposable projective cover and injective envelope
of $L_i$, respectively.

When working with the opposite algebra, we will add the superscript ${}^{\mathrm{op}}$ to all notation.

We refer the reader to \cite{ARS,Ba,DK,GR} for further generalities and details on representation theory
of finite dimensional algebras.

\subsection{Trace functors}\label{s2.2}

With each $N\in A\text{-}\mathrm{mod}$ one associates the corresponding {\em trace functor}
$\mathrm{Tr}_N:A\text{-}\mathrm{mod}\to A\text{-}\mathrm{mod}$ defined in the following way:
\begin{itemize}
\item For every $M\in A\text{-}\mathrm{mod}$, the module $\mathrm{Tr}_N(M)\in A\text{-}\mathrm{mod}$ 
is defined as the submodule $\displaystyle\sum_{f:N\to M}\mathrm{Im}(f)$ of $M$. 
\item For every $M,M'\in A\text{-}\mathrm{mod}$ and every $f:M\to M'$, the corresponding morphism 
$\mathrm{Tr}_N(f):\mathrm{Tr}_N(M)\to \mathrm{Tr}_N(M')$ is defined as the restriction of $f$ to 
$\mathrm{Tr}_N(M)$. 
\end{itemize}
Directly from the definition it follows that $\mathrm{Tr}_N$ is a subfunctor of the identity functor for every
$N$. We denote by $\iota_{N}:\mathrm{Tr}_N\hookrightarrow \mathrm{Id}_{A\text{-}\mathrm{mod}}$ the
corresponding injective natural transformation.

\begin{lemma}\label{lem1}
Let $N\in A\text{-}\mathrm{mod}$.
\begin{enumerate}[$($i$)$]
\item\label{lem1.1} The functor $\mathrm{Tr}_N$ preserves monomorphisms.
\item\label{lem1.2} If $N$ is projective, then $\mathrm{Tr}_N$ preserves epimorphisms.
\item\label{lem1.3} We have  $\mathrm{Tr}_N\circ \mathrm{Tr}_N\cong \mathrm{Tr}_N$.
\end{enumerate}
\end{lemma}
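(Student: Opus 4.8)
The plan is to handle all three statements by direct manipulation of the defining submodules $\mathrm{Tr}_N(M)=\sum_{f:N\to M}\mathrm{Im}(f)\subseteq M$, invoking nothing beyond the definition except, for part \eqref{lem1.2}, the universal lifting property of projective modules.

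For \eqref{lem1.1}, I would observe that, since $\mathrm{Tr}_N$ is a subfunctor of the identity, for a monomorphism $g:M\hookrightarrow M'$ the morphism $\mathrm{Tr}_N(g)$ is by construction the restriction of $g$ to the submodule $\mathrm{Tr}_N(M)\subseteq M$. The restriction of an injective map to any submodule is again injective, so $\mathrm{Tr}_N(g)$ is a monomorphism and no further argument is needed.

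For \eqref{lem1.2}, let $g:M\tto M'$ be an epimorphism; the claim is that $g$ maps $\mathrm{Tr}_N(M)$ onto $\mathrm{Tr}_N(M')$. The inclusion $g(\mathrm{Tr}_N(M))\subseteq\mathrm{Tr}_N(M')$ holds for arbitrary $g$, because $g\bigl(\sum_{f}\mathrm{Im}(f)\bigr)=\sum_{f}\mathrm{Im}(g\circ f)$ and each $g\circ f$ is a homomorphism $N\to M'$. The reverse inclusion is exactly where projectivity enters: given any $h:N\to M'$, projectivity of $N$ together with surjectivity of $g$ produces a lift $f:N\to M$ with $g\circ f=h$, so that $\mathrm{Im}(h)=g(\mathrm{Im}(f))\subseteq g(\mathrm{Tr}_N(M))$. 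Summing over all such $h$ yields $\mathrm{Tr}_N(M')\subseteq g(\mathrm{Tr}_N(M))$, hence $\mathrm{Tr}_N(g)$ is surjective.

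For \eqref{lem1.3}, the heart of the matter is idempotency of the trace construction, which I would verify objectwise by comparing the two submodules $\mathrm{Tr}_N(\mathrm{Tr}_N(M))$ and $\mathrm{Tr}_N(M)$ of $M$. The inclusion of the former in the latter is automatic, again because $\mathrm{Tr}_N$ is a subfunctor of the identity. Conversely, every $h:N\to M$ appearing in the sum defining $\mathrm{Tr}_N(M)$ has image contained in $\mathrm{Tr}_N(M)$ by definition, so it corestricts to a map $h':N\to\mathrm{Tr}_N(M)$ with the same image; then $\mathrm{Im}(h)=\mathrm{Im}(h')\subseteq\mathrm{Tr}_N(\mathrm{Tr}_N(M))$, and summing gives the reverse inclusion. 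The two submodules therefore coincide, and the restriction of $\iota_N$ furnishes the natural isomorphism $\mathrm{Tr}_N\circ\mathrm{Tr}_N\cong\mathrm{Tr}_N$. I do not anticipate any real obstacle: the only steps requiring genuine care are the use of the projective lifting property in \eqref{lem1.2}, which is precisely the place where the hypothesis on $N$ is used, and the routine check in \eqref{lem1.3} that the equality of underlying submodules is natural in $M$.
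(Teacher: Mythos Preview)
Your proposal is correct and follows essentially the same approach as the paper's proof: part~\eqref{lem1.1} via restriction of a monomorphism (the paper phrases this as $f\circ\iota_M$ being monic in the obvious square), part~\eqref{lem1.2} via the projective lifting property exactly as you do, and part~\eqref{lem1.3} is dismissed in the paper as ``direct from the definition'' --- your explicit verification that the two submodules coincide is precisely the unpacking of that phrase.
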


\begin{proof}
Let $f:M\to M'$ be a monomorphism. In the commutative diagram
\begin{displaymath}
\xymatrix{
M\ar@{^{(}->}[rr]^f&&M'\\
\mathrm{Tr}_N(M)\ar[rr]^{\mathrm{Tr}_N(f)}\ar@{^{(}->}[u]^{\iota_M}&&\mathrm{Tr}_N(M')\ar@{^{(}->}[u]_{\iota_{M'}}
}
\end{displaymath}
we have $f\circ \iota_M$ is a monomorphism. Hence $\mathrm{Tr}_N(f)$ is a monomorphism as well. This proves 
claim~\eqref{lem1.1}.

Let $f:M\to M'$ be an epimorphism and $g:N\to M'$ any map. If $N$ is projective, then there is $h:N\to M$ such that 
$g=f\circ h$. Hence $\mathrm{Im}(g)=f(\mathrm{Im}(h))$ showing that $\mathrm{Tr}_N(M)$ surjects onto
$\mathrm{Tr}_N(M')$. This proves claim~\eqref{lem1.2}. 

Claim~\eqref{lem1.3} follows directly from the definition of $\mathrm{Tr}_N$. This completes the proof of the lemma.
\end{proof}

\begin{example}\label{nnewex1}
{\rm
In general, $\mathrm{Tr}_N$ is neither left nor right exact (even if $N$ is projective). 
Indeed, let $A$ be the
path algebra of the quiver $\xymatrix{1\ar[r]&2}$, $P_1$ be the indecomposable projective $A$-module
$\xymatrix{\Bbbk\ar[r]^{\mathrm{Id}}&\Bbbk}$, $L_1$ be the simple $A$-module
$\xymatrix{\Bbbk\ar[r]&0}$ and  $L_2$ be the simple $A$-module
$\xymatrix{0\ar[r]&\Bbbk}$. For $N=P_1$, applying $\mathrm{Tr}_N$ to the short exact sequence
\begin{displaymath}
0\to L_2 \to P_1\to L_1\to 0,
\end{displaymath}
gives the sequence
\begin{displaymath}
0\to  0 \to P_1\to L_1\to 0
\end{displaymath}
which has homology in the middle position.
}
\end{example}

\subsection{Projection functors}\label{s2.3}

For $N\in A\text{-}\mathrm{mod}$ we define the corresponding  {\em projection functor}
$\mathrm{Pr}_N:A\text{-}\mathrm{mod}\to A\text{-}\mathrm{mod}$ as the cokernel of the natural transformation
$\iota_{N}$. Let $\pi_{N}:\mathrm{Id}_{A\text{-}\mathrm{mod}}\tto \mathrm{Pr}_N$ denote the
corresponding surjective natural transformation.  The following properties of projection functors
appear in \cite{Pa,Gr}:
\begin{itemize}
\item For any  $N$, the functor $\mathrm{Pr}_N$ preserves epimorphisms.
\item If  $N$ is simple, then the functor $\mathrm{Pr}_N$ preserves monomorphisms.
\item If $N$ is simple and $\mathrm{Ext}_A^1(N,N)=0$, then $\mathrm{Pr}_N\circ \mathrm{Pr}_N\cong \mathrm{Pr}_N$.
\item If $N$ and $K$ are simple and $\mathrm{Ext}_A^1(K,N)=0$, then 
\begin{displaymath}
\mathrm{Pr}_N\circ \mathrm{Pr}_K\circ \mathrm{Pr}_N\cong 
\mathrm{Pr}_K\circ \mathrm{Pr}_N\circ \mathrm{Pr}_K\cong
\mathrm{Pr}_N  \circ \mathrm{Pr}_K.
\end{displaymath}
\item If $N$ and $K$ are simple and $\mathrm{Ext}_A^1(N,K)=\mathrm{Ext}_A^1(K,N)=0$, then 
\begin{displaymath}
\mathrm{Pr}_N\circ \mathrm{Pr}_K\cong \mathrm{Pr}_K\circ \mathrm{Pr}_N.
\end{displaymath}
\end{itemize}
For the record, we also point out the following connection between the functors $\mathrm{Pr}_N$ and $\mathrm{Tr}_N$.

\begin{lemma}\label{lem2}
For any fixed $N$, the functor $\mathrm{Tr}_N$ is exact if and only if the functor $\mathrm{Pr}_N$ is exact.
\end{lemma}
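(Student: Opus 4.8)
The plan is to deduce everything from the short exact sequence of functors
$$0\to \mathrm{Tr}_N\stackrel{\iota_N}{\longrightarrow}\mathrm{Id}_{A\text{-}\mathrm{mod}}\stackrel{\pi_N}{\longrightarrow}\mathrm{Pr}_N\to 0,$$
which is exactly the content of the definition of $\mathrm{Pr}_N$ as the cokernel of $\iota_N$; in particular, for every $M\in A\text{-}\mathrm{mod}$ the sequence $0\to\mathrm{Tr}_N(M)\to M\to\mathrm{Pr}_N(M)\to 0$ is short exact. First I would fix an arbitrary short exact sequence $0\to M'\to M\to M''\to 0$ and apply the three functors $\mathrm{Tr}_N$, $\mathrm{Id}_{A\text{-}\mathrm{mod}}$ and $\mathrm{Pr}_N$ to it. This produces a commutative diagram whose three columns are the defining short exact sequences above (hence exact) and whose middle row is the given short exact sequence (hence exact). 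The top and bottom rows are genuine complexes, since e.g. $\mathrm{Tr}_N(g)\circ\mathrm{Tr}_N(f)=\mathrm{Tr}_N(g\circ f)=\mathrm{Tr}_N(0)=0$ by functoriality, and similarly for $\mathrm{Pr}_N$.

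Next I would read the three rows as chain complexes $X_\bullet$ (the $\mathrm{Tr}_N$-row), $Y_\bullet$ (the $\mathrm{Id}$-row) and $Z_\bullet$ (the $\mathrm{Pr}_N$-row), concentrated in homological degrees $2,1,0$ corresponding to $M',M,M''$. The exactness of the columns says precisely that $0\to X_\bullet\to Y_\bullet\to Z_\bullet\to 0$ is a short exact sequence of complexes, so it gives a long exact sequence in homology
$$0\to H_2(X)\to H_2(Y)\to H_2(Z)\to H_1(X)\to H_1(Y)\to H_1(Z)\to H_0(X)\to H_0(Y)\to H_0(Z)\to 0.$$
Since the middle row is short exact, $Y_\bullet$ is acyclic, that is $H_2(Y)=H_1(Y)=H_0(Y)=0$. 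Feeding this into the long exact sequence forces $H_2(X)=0$ and $H_0(Z)=0$ (recovering that $\mathrm{Tr}_N$ preserves monomorphisms and $\mathrm{Pr}_N$ preserves epimorphisms) and, more importantly, produces connecting isomorphisms
$$H_1(X)\cong H_2(Z)\qquad\text{and}\qquad H_0(X)\cong H_1(Z).$$

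Finally I would interpret these homologies as the precise defects of exactness: $H_1(X)$ and $H_0(X)$ measure the failure of $\mathrm{Tr}_N$ to be left and right exact on the chosen sequence, while $H_2(Z)$ and $H_1(Z)$ measure the failure of $\mathrm{Pr}_N$ to be left exact and to be exact in the middle. Since $H_2(X)$ and $H_0(Z)$ always vanish, $\mathrm{Tr}_N$ is exact if and only if $H_1(X)=H_0(X)=0$ for every short exact sequence, and $\mathrm{Pr}_N$ is exact if and only if $H_2(Z)=H_1(Z)=0$ for every short exact sequence; the two displayed isomorphisms, holding for each sequence separately, make these two conditions equivalent. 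I expect the only real care to be in the bookkeeping of the homological degrees and in matching each homology group with the correct one-sided exactness statement; the homological algebra itself, namely a single application of the long exact sequence associated with a short exact sequence of three-term complexes, is routine.
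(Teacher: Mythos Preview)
Your proof is correct and is essentially the same argument as the paper's. The paper sets up the identical $3\times 3$ diagram (exact columns coming from the defining short exact sequence $\mathrm{Tr}_N\hookrightarrow\mathrm{Id}\twoheadrightarrow\mathrm{Pr}_N$, exact middle row given by the chosen short exact sequence) and then simply invokes the Nine Lemma to conclude that the top row is exact if and only if the bottom row is; you instead unpack this via the long exact sequence of the short exact sequence of three-term complexes, which is precisely the mechanism that proves the Nine Lemma variant being quoted. Your version is a little more explicit and even yields the finer information $H_1(X)\cong H_2(Z)$ and $H_0(X)\cong H_1(Z)$, but the underlying idea is identical.
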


\begin{proof}
For an exact sequence  $X\overset{f}{\hookrightarrow} Y\overset{g}\tto Z$ in 
$A\text{-}\mathrm{mod}$ consider the commutative diagram 
\begin{displaymath}
\xymatrix{
\mathrm{Tr}_N(X)\ar@{^{(}->}[d]_{\iota_N(X)}\ar[rr]^{\mathrm{Tr}_N(f)}&&
\mathrm{Tr}_N(Y)\ar@{^{(}->}[d]_{\iota_N(Y)}\ar[rr]^{\mathrm{Tr}_N(g)}&&
\mathrm{Tr}_N(Z)\ar@{^{(}->}[d]^{\iota_N(Z)}\\
X\ar@{^{(}->}[rr]^f\ar@{->>}[d]_{\pi_N(X)}&&
Y\ar@{->>}[rr]^g\ar@{->>}[d]_{\pi_N(Y)}&&Z\ar@{->>}[d]^{\pi_N(Z)}\\
\mathrm{Pr}_N(X)\ar[rr]^{\mathrm{Pr}_N(f)}&&
\mathrm{Pr}_N(Y)\ar[rr]^{\mathrm{Pr}_N(g)}&&
\mathrm{Pr}_N(Z)\\
}
\end{displaymath}
Here all columns are exact by construction and the middle row is exact by assumption. Therefore the Nine Lemma
(a.k.a. the $3\times 3$-Lemma)
says that the first row is exact if and only if the third row is exact. 
\end{proof}

\section{Dual projection functors}\label{s3}

\subsection{Idealization functors}\label{s3.1}

The algebra $A$ is an $A\text{-}A$--bimodule, as usual.
Tensoring with this bimodule (over $A$) is isomorphic to the identity endofunctor of $A\text{-}\mathrm{mod}$.
We identify subbimodules of ${}_AA_A$ and
two-sided ideals of $A$. For each two-sided ideal $I\subset A$ denote by $\mathrm{Su}_I$ the endofunctor of
$A\text{-}\mathrm{mod}$ defined in the following way:
\begin{itemize}
\item For every $M\in A\text{-}\mathrm{mod}$, the module $\mathrm{Su}_I(M)$ is defined as $IM$.
\item For every $M,M'\in A\text{-}\mathrm{mod}$ and $f:M\to M'$, the morphism $\mathrm{Su}_I(f)$ is defined as 
the restriction of $f$ to $IM$.
\end{itemize}
We will call $\mathrm{Su}_I$ the {\em idealization functor} associated to $I$, where the notation $\mathrm{Su}$
stands for ``Sub''.

Let $\gamma:\mathrm{Su}_I\hookrightarrow \mathrm{Id}_{A\text{-}\mathrm{mod}}$ denote the injective natural
transformation given by the canonical inclusion $IM\hookrightarrow M$. Directly from the definition we obtain 
that for any two two-sided ideals $I$ and $J$ in $A$ we have
\begin{equation}\label{eq1}
\mathrm{Su}_I\circ \mathrm{Su}_J=\mathrm{Su}_{IJ}.
\end{equation}

Furthermore, if $I\subset J$, then we have the canonical inclusion $\mathrm{Su}_I\hookrightarrow \mathrm{Su}_J$.

\subsection{Exactness of idealization}\label{s3.2}

Here we prove the following property of idealization functors.

\begin{lemma}\label{lem3}
Let $I$ be a two-sided ideal in $A$.
\begin{enumerate}[$($i$)$]
\item\label{lem3.1} The functor $\mathrm{Su}_I$ preserves monomorphisms.
\item\label{lem3.2} The functor $\mathrm{Su}_I$ preserves epimorphisms.
\end{enumerate}
\end{lemma}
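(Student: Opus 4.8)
The plan is to read both statements off directly from the definition $\mathrm{Su}_I(M)=IM$ together with $A$-linearity, in exactly the spirit of the proof of Lemma~\ref{lem1}. First I would record the single auxiliary fact used throughout: for any homomorphism $f\colon M\to M'$ of left $A$-modules one has $f(IM)=If(M)$, since $f\bigl(\sum_k i_k m_k\bigr)=\sum_k i_k f(m_k)$ for $i_k\in I$, $m_k\in M$ gives $f(IM)\subseteq If(M)$, and the reverse inclusion is the same computation read backwards. In particular $f(IM)\subseteq IM'$, which is exactly what makes the restriction $\mathrm{Su}_I(f)=f|_{IM}\colon IM\to IM'$ well defined; phrased differently, this is the naturality identity $f\circ\gamma_M=\gamma_{M'}\circ\mathrm{Su}_I(f)$ for the canonical inclusion $\gamma\colon\mathrm{Su}_I\hookrightarrow\mathrm{Id}_{A\text{-}\mathrm{mod}}$.

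For claim~\eqref{lem3.1} I would argue as in Lemma~\ref{lem1}: assume $f$ is a monomorphism. The inclusion $\gamma_M$ is a monomorphism and so is $f$, hence $f\circ\gamma_M$ is a monomorphism; by the naturality identity $f\circ\gamma_M=\gamma_{M'}\circ\mathrm{Su}_I(f)$, and since this composite is a monomorphism, so is its right factor $\mathrm{Su}_I(f)$. This one-line diagram chase needs no hypothesis on $I$.

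For claim~\eqref{lem3.2} I would simply compute an image: assume $f$ is an epimorphism, so $f(M)=M'$. Then the image of $\mathrm{Su}_I(f)$ is $f(IM)=If(M)=IM'=\mathrm{Su}_I(M')$, using the auxiliary identity and $f(M)=M'$. Hence $\mathrm{Su}_I(f)$ surjects onto $\mathrm{Su}_I(M')$, i.e.\ it is an epimorphism.

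I do not expect a genuine obstacle here: the whole argument rests on the bookkeeping identity $f(IM)=If(M)$, which is immediate from $A$-linearity, and the rest is formal. The one conceptual point worth flagging is that, unlike $\mathrm{Tr}_N$ in Lemma~\ref{lem1}\eqref{lem1.2}, no projectivity (or any other) hypothesis is needed for either direction, precisely because $IM$ is an $A$-linearly defined submodule of $M$ rather than a trace. I would resist overclaiming, though: preserving both monomorphisms and epimorphisms does \emph{not} make $\mathrm{Su}_I$ exact, and in fact one checks on the quiver of Example~\ref{nnewex1}, taking $I=\mathrm{rad}(A)$, that $\mathrm{Su}_I$ can fail to be exact in the middle term. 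This is exactly the sense in which $\mathrm{Su}_I$ is a non-exact ancestor of the dual projection functors.
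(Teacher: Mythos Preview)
Your proof is correct and follows essentially the same approach as the paper: for \eqref{lem3.1} the paper simply notes that the restriction of a monomorphism is a monomorphism (your naturality-square phrasing is the same fact dressed categorically), and for \eqref{lem3.2} the paper does the identical element-wise computation $f(aw)=af(w)=av$ that underlies your identity $f(IM)=If(M)$. Your closing remark on non-exactness is apt; the paper records the same phenomenon (with $A=\Bbbk[x]/(x^2)$) in the example immediately following the lemma.
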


\begin{proof}
Claim~\eqref{lem3.1} follows from the definition of $\mathrm{Su}_I$ and the fact that the restriction of a
monomorphism is a monomorphism. To prove claim~\eqref{lem3.2}, consider an epimorphism $f:M\tto M'$,
$v\in M'$ and $a\in I$. Then there is $w\in M$ such that $f(w)=v$ and hence $af(w)=f(aw)=av$. 
As $aw\in \mathrm{Su}_I(M)$, we obtain that $av$ belongs to the image of $\mathrm{Su}_I(f)$, completing the proof.
\end{proof}

\begin{example}\label{nnewex2}
{\rm  
The functor $\mathrm{Su}_I$ is neither left nor right exact in general. Indeed, consider the algebra 
$A=\Bbbk[x]/(x^2)$, let $L$ be the (unique up to isomorphism) simple $A$-module and set $I:=\mathrm{Rad}(A)$. 
Applying $\mathrm{Su}_I$ to the short exact sequence
\begin{displaymath}
0\to  L\to {}_A A\to L\to 0,
\end{displaymath}
we obtain the sequence
\begin{displaymath}
0\to 0\to L\to 0\to 0 
\end{displaymath}
which has homology in the middle position.
}
\end{example}

\subsection{Idealization functors versus trace functors}\label{s3.3}

Let $I$ be a two-sided ideal of $A$ and $N$ an $A$-module. Since both $\mathrm{Su}_I$ and $\mathrm{Tr}_N$ 
are subfunctors of the identity functor, it is natural to ask when they are isomorphic. In this subsection 
we would like to present some examples showing that, in general, these two families of functors are really different.

\begin{lemma}\label{lem4}
If $A$ is not semi-simple, then $\mathrm{Su}_{\mathrm{Rad}(A)}$ is not isomorphic to any trace functor. 
\end{lemma}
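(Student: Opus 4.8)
The plan is to test the putative isomorphism on simple modules, where the radical is completely transparent, and extract a contradiction. First I would record the concrete meaning of the functor: for every $M\in A\text{-}\mathrm{mod}$ one has $\mathrm{Su}_{\mathrm{Rad}(A)}(M)=\mathrm{Rad}(A)M=\mathrm{Rad}(M)$, the radical of $M$. In particular, since the radical of a simple module is zero, we get $\mathrm{Su}_{\mathrm{Rad}(A)}(L_i)=0$ for every $i=1,2,\dots,n$.

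Next I would assume, aiming for a contradiction, that there is an isomorphism of functors $\mathrm{Su}_{\mathrm{Rad}(A)}\cong\mathrm{Tr}_N$ for some $N\in A\text{-}\mathrm{mod}$. Evaluating this isomorphism at each simple module gives $\mathrm{Tr}_N(L_i)\cong\mathrm{Su}_{\mathrm{Rad}(A)}(L_i)=0$. The key computation is then that for a simple target we have $\mathrm{Tr}_N(L_i)=\sum_{f:N\to L_i}\mathrm{Im}(f)$, and every nonzero $f:N\to L_i$ is surjective; hence $\mathrm{Tr}_N(L_i)$ equals $L_i$ when $\mathrm{Hom}_A(N,L_i)\neq 0$ and equals $0$ otherwise. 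Therefore $\mathrm{Tr}_N(L_i)=0$ forces $\mathrm{Hom}_A(N,L_i)=0$, and as this holds for every $i$, the module $N$ admits no nonzero homomorphism to any simple module.

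Finally I would invoke the standard fact that a nonzero finitely generated module over a finite dimensional algebra possesses a simple quotient, and so maps nonzero onto some $L_i$ (equivalently, $N/\mathrm{Rad}(N)=0$ together with Nakayama's lemma forces $N=0$). Thus $N=0$, whence $\mathrm{Tr}_N$ is the zero functor. On the other hand, evaluating $\mathrm{Su}_{\mathrm{Rad}(A)}$ on the regular module gives $\mathrm{Su}_{\mathrm{Rad}(A)}(A)=\mathrm{Rad}(A)\neq 0$, precisely because $A$ is not semi-simple, so $\mathrm{Su}_{\mathrm{Rad}(A)}$ is a nonzero functor. This contradiction finishes the proof. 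I do not expect any genuine obstacle here: the entire argument rests on the single observation that the radical functor annihilates every simple module, while any nonzero trace functor must detect at least one simple quotient of $N$. The only points needing a little care are the identification $\mathrm{Tr}_N(L_i)=0\iff\mathrm{Hom}_A(N,L_i)=0$ and the passage from ``no simple quotients'' to $N=0$, both of which are routine.
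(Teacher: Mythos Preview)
Your proof is correct and follows essentially the same approach as the paper's: both evaluate the two functors on simple modules (the paper packages them as $L=A/\mathrm{Rad}(A)$, you use the individual $L_i$), observe that $\mathrm{Su}_{\mathrm{Rad}(A)}$ kills them while $\mathrm{Tr}_N$ cannot whenever $N\neq 0$ admits a simple quotient, and then rule out $N=0$ via $\mathrm{Su}_{\mathrm{Rad}(A)}(A)=\mathrm{Rad}(A)\neq 0$.
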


\begin{proof}
We have $\mathrm{Su}_{\mathrm{Rad}(A)}(A)=\mathrm{Rad}(A)\neq 0$ as $A$ is not semi-simple. Hence
$\mathrm{Su}_{\mathrm{Rad}(A)}$ is not the zero functor, in particular, it is not isomorphic to $\mathrm{Tr}_0$.
At the same time, let $L:=A/\mathrm{Rad}(A)$. Then $\mathrm{Su}_{\mathrm{Rad}(A)}(L)=0$. 
On the other hand, for any non-zero $N\in A\text{-}\mathrm{mod}$ the module $N$ surjects onto some
simple $A$-module. As every simple $A$-module is a summand of $L$, we have
$\mathrm{Tr}_{N}(L)\neq 0$. The claim follows.
\end{proof}

\begin{lemma}\label{lem5}
If $N$ is simple and not projective, then $\mathrm{Tr}_{N}$ is not isomorphic to any idealization functor. 
\end{lemma}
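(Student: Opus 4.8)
The plan is to separate trace functors from idealization functors by their behaviour on epimorphisms. By Lemma~\ref{lem3}\eqref{lem3.2}, every idealization functor $\mathrm{Su}_I$ preserves epimorphisms, and this property is invariant under natural isomorphism of functors (if $\mathrm{Tr}_N\cong\mathrm{Su}_I$, the naturality squares identify $\mathrm{Tr}_N(f)$ with $\mathrm{Su}_I(f)$ up to isomorphisms on source and target, so one is an epimorphism exactly when the other is). Hence it suffices to exhibit a single epimorphism that $\mathrm{Tr}_N$ fails to preserve whenever $N$ is simple and non-projective.

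First I would take the projective cover $p:P\tto N$ of the simple module $N$. Since $N$ is not projective, $P$ is not simple; and as the projective cover of a simple module, $P$ is indecomposable with $\ker(p)=\mathrm{Rad}(P)$ its unique maximal submodule. On the target, $\mathrm{Tr}_N(N)=N$, witnessed by the identity map. On the source, every homomorphism $N\to P$ is either zero or has image a submodule of $P$ isomorphic to the simple module $N$ (because $N$ is simple); hence $\mathrm{Tr}_N(P)$ is a sum of simple submodules and therefore lies in $\mathrm{soc}(P)$.

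The key step is then the observation that for the indecomposable non-simple module $P$ one has $\mathrm{soc}(P)\subseteq\mathrm{Rad}(P)$: a simple submodule $S\not\subseteq\mathrm{Rad}(P)$ would satisfy $S+\mathrm{Rad}(P)=P$ since $\mathrm{Rad}(P)$ is the unique maximal submodule, while simplicity forces $S\cap\mathrm{Rad}(P)=0$, splitting $S$ off as a direct summand and contradicting indecomposability. Consequently $\mathrm{Tr}_N(P)\subseteq\mathrm{Rad}(P)=\ker(p)$, so the restriction $\mathrm{Tr}_N(p)$ of $p$ to $\mathrm{Tr}_N(P)$ is the zero map and cannot surject onto $\mathrm{Tr}_N(N)=N\neq 0$. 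Thus $\mathrm{Tr}_N$ does not preserve the epimorphism $p$, whereas every $\mathrm{Su}_I$ does, which proves that $\mathrm{Tr}_N$ is isomorphic to no idealization functor.

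I expect the only delicate point to be the socle-in-radical observation for $P$; everything else is a direct application of the definitions, of Lemma~\ref{lem3}, and of the fact that the projective cover of a non-projective simple module is a non-simple indecomposable. It is worth noting that this argument is, in spirit, the converse to Lemma~\ref{lem1}\eqref{lem1.2}: projectivity of $N$ is precisely what is needed for $\mathrm{Tr}_N$ to preserve epimorphisms, and its failure here is what obstructs the isomorphism with any $\mathrm{Su}_I$.
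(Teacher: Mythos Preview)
Your proof is correct and follows essentially the same approach as the paper's own proof: both take the projective cover $p:P\tto N$, observe that $\mathrm{Tr}_N(P)\subseteq\mathrm{soc}(P)\subseteq\mathrm{Rad}(P)=\ker(p)$ since $P$ is indecomposable and not simple, conclude that $\mathrm{Tr}_N(p)=0$ while $\mathrm{Tr}_N(N)=N$, and contrast this with the fact that every $\mathrm{Su}_I$ preserves epimorphisms by Lemma~\ref{lem3}\eqref{lem3.2}. Your write-up is slightly more explicit about the socle-in-radical step and the invariance under natural isomorphism, but the argument is the same.
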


\begin{proof}
Let $f:P\tto N$ be a projective cover of $N$. Then $P$ is indecomposable and has simple top.
As $N$ is simple, $\mathrm{Tr}_{N}(P)$ belongs to the socle of $P$. As $N$ is not projective,
$P\not\cong N$. Consequently, the socle of $P$ belongs to the radical of $P$. Therefore
$f$ annihilates $\mathrm{Tr}_{N}(P)$ and it follows
that $\mathrm{Tr}_{N}(f)$ is the zero map.
We also have the obvious isomorphism $\mathrm{Tr}_{N}(N)\cong N$.
At the same time, each idealization functor preserves epimorphisms by Lemma~\ref{lem3}\eqref{lem3.2}.
The obtained contradiction proves the statement.
\end{proof}

\subsection{Definition of dual projection functors}\label{s3.4}

Recall that, for any additive functor $\mathrm{F}:A\text{-}\mathrm{proj}\to A\text{-}\mathrm{mod}$, there is
a unique, up to isomorphism, right exact functor $\mathrm{G}:A\text{-}\mathrm{mod}\to A\text{-}\mathrm{mod}$
such that the restriction of $\mathrm{G}$ to $A\text{-}\mathrm{proj}$ is isomorphic to $\mathrm{F}$. 
As ${}_A A$ is an additive generator of $A\text{-}\mathrm{proj}$, the condition that 
the restriction of $\mathrm{G}$ to $A\text{-}\mathrm{proj}$ is isomorphic to $\mathrm{F}$
is equivalent to the condition that the $A\text{-}A$--bimodules 
$\mathrm{F}(A)$ and $\mathrm{G}(A)$ are isomorphic. The functor $\mathrm{G}$ is isomorphic to the functor
$\mathrm{F}(A)\otimes_A{}_-$, see \cite[Chapter~2,~{\S}2]{Ba} for details.

For an ideal $I$ in $A$ define a {\em dual projection functor} corresponding to $I$ as a 
functor isomorphic to the functor
\begin{displaymath}
\mathrm{Dp}_I:= \mathrm{Su}_I(A)\otimes_A{}_-:A\text{-}\mathrm{mod}\to A\text{-}\mathrm{mod}.
\end{displaymath}
Directly from the definition we have that $\mathrm{Dp}_I$ is right exact.

\begin{lemma}\label{lem6}
If $A$ is hereditary, then the functor $\mathrm{Dp}_I$ is exact for any $I$.
\end{lemma}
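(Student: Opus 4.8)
The plan is to make the functor $\mathrm{Dp}_I$ completely explicit and then reduce its exactness to a flatness statement about $I$. First I would note that $\mathrm{Su}_I(A)=IA=I$, carrying its natural structure as the two-sided ideal $I\subseteq A$ regarded as an $A\text{-}A$--bimodule; hence $\mathrm{Dp}_I$ is simply the functor $I\otimes_A{}_-$. Since tensoring is always right exact (as already observed right after the definition of $\mathrm{Dp}_I$), the entire content of the lemma is that $I\otimes_A{}_-$ also preserves monomorphisms.

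Next I would recall the standard fact that, forming $I\otimes_A{}_-$ via the right $A$-module structure on $I$, this functor is exact if and only if $I$ is flat as a right $A$-module, equivalently $\mathrm{Tor}_1^A(I,{}_-)=0$. The key observation is then that $I$, viewed as a right $A$-module, is a right ideal of $A$, i.e.\ a submodule of the regular right module $A_A$, which is projective. Because $A$ is hereditary, every submodule of a projective module is again projective, so $I$ is projective---and in particular flat---as a right $A$-module. Tensoring with a flat module is exact, which gives the claim.

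The one point that requires care is that hereditarity is used here in its right-handed incarnation (submodules of projective \emph{right} modules are projective), whereas throughout the paper modules are left modules by default. This is harmless because for a finite dimensional algebra the left and right global dimensions coincide, so left hereditary and right hereditary are equivalent. If one prefers to avoid invoking this symmetry, I would instead run the long exact sequence in $\mathrm{Tor}$ attached to the short exact sequence $0\to I\to A\to A/I\to 0$ of right modules: since $A$ is free, hence flat, this gives $\mathrm{Tor}_1^A(I,M)\cong\mathrm{Tor}_2^A(A/I,M)$ for every left module $M$, and the latter vanishes because $A$ has global dimension at most one. Either way, the vanishing of $\mathrm{Tor}_1^A(I,{}_-)$ is the crux of the argument, and it is the only place where the hereditary hypothesis enters.
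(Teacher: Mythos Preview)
Your argument is correct and follows essentially the same route as the paper: observe that $\mathrm{Su}_I(A)=I$ is a right $A$-submodule of $A_A$, invoke hereditarity to conclude it is projective as a right $A$-module, and deduce that tensoring with it is exact. The paper compresses this into two sentences; your version is more explicit, and in particular your remark on the left/right symmetry of hereditarity for finite dimensional algebras (and the alternative via the $\mathrm{Tor}$ shift) fills in a point the paper leaves implicit.
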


\begin{proof}
As $\mathrm{Su}_I(A)\subset A$ and $A$ is hereditary, the right $A$-module $\mathrm{Su}_I(A)$ is projective.
This means that $\mathrm{Dp}_I$ is exact.
\end{proof}

\begin{corollary}\label{cor7}
If $A$ is hereditary, then $\mathrm{Dp}_I\circ \mathrm{Dp}_J\cong \mathrm{Dp}_{IJ}$ for any
two two-sided ideals $I,J$ in $A$.
\end{corollary}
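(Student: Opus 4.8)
The plan is to reduce the statement about functors to a statement about bimodules. Since $\mathrm{Su}_I(A)=IA=I$, by definition $\mathrm{Dp}_I\cong I\otimes_A{}_-$, and likewise $\mathrm{Dp}_J\cong J\otimes_A{}_-$ and $\mathrm{Dp}_{IJ}\cong IJ\otimes_A{}_-$. Associativity of the tensor product gives a natural isomorphism $\mathrm{Dp}_I\circ\mathrm{Dp}_J\cong(I\otimes_A J)\otimes_A{}_-$. Recalling from Subsection~\ref{s3.4} that a right exact functor of the form $X\otimes_A{}_-$ is determined up to isomorphism by the bimodule $X$, it therefore suffices to produce a bimodule isomorphism $I\otimes_A J\cong IJ$; tensoring this isomorphism with ${}_-$ will then identify $\mathrm{Dp}_I\circ\mathrm{Dp}_J$ with $\mathrm{Dp}_{IJ}$.

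To construct this bimodule isomorphism I would use exactness of $\mathrm{Dp}_I$, which holds by Lemma~\ref{lem6} precisely because $A$ is hereditary. Consider the short exact sequence of left $A$-modules $0\to J\to A\to A/J\to 0$ coming from the inclusion of $J$ into $A$. Applying $\mathrm{Dp}_I=I\otimes_A{}_-$ and using the canonical identification $I\otimes_A A\cong I$, I obtain an injective bimodule map $I\otimes_A J\to I$, the injectivity being exactly the content of exactness of $\mathrm{Dp}_I$ on this monomorphism. A direct check shows this map is the multiplication map $a\otimes b\mapsto ab$, whose image is the set of all finite sums of products $ab$ with $a\in I$ and $b\in J$, that is, the ideal $IJ$. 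Hence $I\otimes_A J\cong IJ$ as $A\text{-}A$--bimodules, which completes the argument.

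The only delicate point, and the place where heredity is genuinely used, is the injectivity of the multiplication map $I\otimes_A J\to IJ$: in general this map is merely surjective and can have a kernel, so that $\mathrm{Dp}_I\circ\mathrm{Dp}_J$ and $\mathrm{Dp}_{IJ}$ need not agree. Here the hereditary hypothesis feeds in through Lemma~\ref{lem6} (equivalently, through projectivity, hence flatness, of the right $A$-module $I$), guaranteeing that $\mathrm{Dp}_I$ preserves the monomorphism $J\hookrightarrow A$ and thus eliminating the kernel. Everything else is formal manipulation of tensor products.
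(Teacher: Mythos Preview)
Your proof is correct and follows essentially the same approach as the paper. Both arguments use Lemma~\ref{lem6} to exploit exactness of $\mathrm{Dp}_I$ and reduce to a check on $A\text{-}\mathrm{proj}$: the paper phrases this as ``restrict to projectives and invoke formula~\eqref{eq1},'' while you phrase it as ``compute the bimodule $I\otimes_A J$ directly via the multiplication map,'' but these are the same computation (evaluating both functors on the generator ${}_AA$ and using that right exact functors are determined there).
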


\begin{proof}
Note that for hereditary $A$ the functor $\mathrm{Dp}_I$ preserves $A\text{-}\mathrm{proj}$.
Because of exactness, established in Lemma~\ref{lem6}, it is thus enough to prove the isomorphism when 
restricted to   $A\text{-}\mathrm{proj}$ where it reduces to formula~\eqref{eq1}.
\end{proof}

\subsection{Special dual projection functors}\label{s3.5}

The radical of $A$ (see e.g. \cite[Section~3]{DK}) coincides with the radical of 
the $A\text{-}A$--bimodule ${}_AA_A$ and we have a short exact sequence
\begin{displaymath}
0\to \mathrm{Rad}(A)\to  {}_AA_A\to \bigoplus_{i=1}^n L_{ii}\to 0.
\end{displaymath}
For every $i=1,2,\dots,n$, this gives, using canonical projection onto a component of a direct sum, an
epimorphism ${}_AA_A\tto L_{ii}$. Let $J_i$ denote the kernel of the latter epimorphism. We will use
the shortcut $\mathrm{F}_i$ for the corresponding dual projection functor $\mathrm{Dp}_{J_i}$.
Setting $n_i:=\dim(L_i)$ for $i=1,2,\dots,n$, we have an isomorphism of left $A$-modules as follows:
\begin{equation}\label{eqan1}
J_i\cong \mathrm{Rad}(P_i)^{\oplus n_i}\oplus \bigoplus_{j\neq i} P_j^{\oplus n_j}. 
\end{equation}

\subsection{Dual projection functors versus projection functors}\label{s3.6}

In this subsection we explain the name {\em dual projection functors}.

For $i=1,2,\dots,n$ denote by $\mathrm{G}_i$ the unique, up to isomorphism, left exact endofunctor of
$A\text{-}\mathrm{mod}$ satisfying the condition that 
\begin{displaymath}
\mathrm{G}_i\vert_{A\text{-}\mathrm{inj}}\cong  \mathrm{Pr}_{L_i}\vert_{A\text{-}\mathrm{inj}}.
\end{displaymath}
For example, we can take
\begin{displaymath}
\mathrm{G}_i=\mathrm{Hom}_A((\mathrm{Pr}_{L_i}(A^*))^*,{}_-), 
\end{displaymath}
where $(\mathrm{Pr}_{L_i}(A^*))^*$ is viewed as an $A$--$A$-bimodule in the obvious way, 
see \cite[Subsection~2.3]{GrMa} for details. In other words, $\mathrm{G}_i$ is the unique 
left exact extension  of the projection functor corresponding to the simple module $L_i$.

\begin{proposition}\label{prop8}
There is an isomorphism of functors as follows:  $\mathrm{F}_i\cong *\circ \mathrm{G}^{\mathrm{op}}_i\circ *$.
\end{proposition}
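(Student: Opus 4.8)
The plan is to reduce the claimed isomorphism to a computation of bimodules, using the principle recalled in Subsection~\ref{s3.4}: a right exact endofunctor $\mathrm{G}$ of $A\text{-}\mathrm{mod}$ is determined up to isomorphism by the $A\text{-}A$--bimodule $\mathrm{G}(A)$, since $\mathrm{G}\cong \mathrm{G}(A)\otimes_A{}_-$. Hence it suffices to verify that both sides are right exact and take isomorphic values on ${}_AA_A$. For the left-hand side this is immediate: $\mathrm{F}_i=\mathrm{Dp}_{J_i}$ is right exact by definition, and $\mathrm{F}_i(A)=\mathrm{Su}_{J_i}(A)\otimes_A A\cong J_i$. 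Writing $\mathrm{H}_i:=*\circ\mathrm{G}_i^{\mathrm{op}}\circ *$, right exactness of $\mathrm{H}_i$ follows because $*$ is an exact contravariant duality while $\mathrm{G}_i^{\mathrm{op}}$ is left exact: a short exact sequence is carried by the inner $*$ to a short exact sequence with arrows reversed, by the left exact $\mathrm{G}_i^{\mathrm{op}}$ to a left exact sequence, and by the outer $*$ to a right exact sequence.

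It remains to compute the bimodule $\mathrm{H}_i(A)=\bigl(\mathrm{G}_i^{\mathrm{op}}(A^*)\bigr)^*$. The key point is that $A^*=\mathrm{Hom}_{\Bbbk}(A,\Bbbk)$ is the injective cogenerator of $\mathrm{mod}\text{-}A$, hence injective; so, by the defining property of $\mathrm{G}_i^{\mathrm{op}}$, it may be replaced on $A^*$ by the projection functor for the simple right module $L_i^*$,
\begin{displaymath}
\mathrm{G}_i^{\mathrm{op}}(A^*)\cong \mathrm{Pr}_{L_i^*}(A^*)=A^*/\mathrm{Tr}_{L_i^*}(A^*).
\end{displaymath}
All modules appearing here inherit compatible left $A$-actions from the bimodule structure of $A^*$, so these isomorphisms are isomorphisms of $A\text{-}A$--bimodules.

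The final step is to identify the trace. Dualizing the defining sequence $0\to J_i\to {}_AA_A\to L_{ii}\to 0$ of Subsection~\ref{s3.5} and using $L_{ii}^*\cong L_{ii}$ yields a short exact sequence of bimodules
\begin{displaymath}
0\to L_{ii}\to A^*\to J_i^*\to 0,
\end{displaymath}
in which $L_{ii}$ sits inside $A^*$ as the annihilator of $J_i$; dualizing the radical sequence shows that $\bigoplus_{j}L_{jj}$ is the socle of $A^*$, with this $L_{ii}$ being its $i$-th summand. Since $L_{jj}\cong (L_j^*)^{\oplus n_j}$ as a right $A$-module, the $L_i^*$-isotypic part of the socle is exactly $L_{ii}$, whence $\mathrm{Tr}_{L_i^*}(A^*)=L_{ii}$ and $\mathrm{G}_i^{\mathrm{op}}(A^*)\cong A^*/L_{ii}\cong J_i^*$ as bimodules. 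Therefore $\mathrm{H}_i(A)\cong (J_i^*)^*\cong J_i\cong \mathrm{F}_i(A)$, and the desired isomorphism $\mathrm{F}_i\cong \mathrm{H}_i$ follows from Subsection~\ref{s3.4}. The main obstacle is precisely the bookkeeping in this last step: every identification must be carried out at the level of $A\text{-}A$--bimodules rather than one-sided modules, so that the uniqueness statement genuinely applies; in particular one must check that $\mathrm{Tr}_{L_i^*}(A^*)$, computed a priori in $\mathrm{mod}\text{-}A$, is stable under the left $A$-action and coincides with the subbimodule $L_{ii}$.
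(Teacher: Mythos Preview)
Your proof is correct and follows essentially the same route as the paper's: reduce to comparing the bimodules obtained by evaluating both right exact functors at $A$, dualize the defining sequence $J_i\hookrightarrow A\tto L_{ii}$, and identify $\mathrm{Tr}_{L_i^*}(A^*)$ with the copy of $L_{ii}$ in the socle of $A^*$. The only cosmetic difference is that the paper unwinds the explicit $\mathrm{Hom}$-description of $\mathrm{G}_i^{\mathrm{op}}$ via adjunction to reach $(\mathrm{Pr}^{\mathrm{op}}_{L_i^*}(A^*))^*$, whereas you invoke directly that $A^*$ is injective so $\mathrm{G}_i^{\mathrm{op}}$ agrees with $\mathrm{Pr}_{L_i^*}$ there; your remark that the natural isomorphism on injectives automatically respects the left $A$-action (since that action is by right-module endomorphisms) is exactly what makes this shortcut legitimate at the bimodule level.
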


\begin{proof}
Both $\mathrm{F}_i$ and $*\circ \mathrm{G}^{\mathrm{op}}_i\circ *$ are right exact functors and hence it is sufficient
to prove that they are isomorphic on $A\text{-}\mathrm{proj}$. For the additive generator $A$ of the
latter category we have 
\begin{displaymath}
(\mathrm{G}_i^{\mathrm{op}}(A^*))^*\cong 
\mathrm{Hom}_{\text{-}A}\big((\mathrm{Pr}^{\mathrm{op}}_{L^*_i}(A^*))^*,A^*\big)^*
\cong \mathrm{Hom}_{\text{-}\Bbbk}\big((\mathrm{Pr}^{\mathrm{op}}_{L^*_i}(A^*))^*,\Bbbk\big)^*\cong
(\mathrm{Pr}^{\mathrm{op}}_{L^*_i}(A^*))^*,
\end{displaymath}
where the second isomorphism is given by adjunction,
and thus the claim of our proposition amounts to finding a natural isomorphism between
$(\mathrm{F}_i(A))^*\cong J_i^*$ and $\mathrm{Pr}^{\mathrm{op}}_{L^*_i}(A^*)$.

Applying $*$ to the exact sequence $J_i\hookrightarrow A\tto L_{ii}$ results in the exact sequence
$L_{ii}^*\hookrightarrow A^*\tto J_i^*$. As $L^*_{ii}\cong L_{ii}$ and all other simple subbimodules of
$A^*$ are of the form $L_{jj}$ for some $j\neq i$, the submodule $\mathrm{Tr}^{\mathrm{op}}_{L^*_i}(A^*)$ coincides 
with $L^*_{ii}$. This implies that there is a bimodule isomorphism $J_i^*\cong \mathrm{Pr}^{\mathrm{op}}_{L^*_i}(A^*)$ 
which completes the proof.
\end{proof}

Proposition~\ref{prop8} allows us to freely transfer results for projection functors to dual
projection functors and vice versa. For technical reasons in this paper we will mostly work
with dual projection functors.

\subsection{Dual projection functors and coapproximation functors}\label{s3.7}

In some cases dual projective functors can be interpreted as partial coapproximation functors in the 
terminology of \cite[Subsection~2.4]{KhMa}. For $i=1,2,\dots,n$, set 
\begin{displaymath}
Q_i:=P_1\oplus P_2\oplus \dots\oplus P_{i-2}\oplus P_{i-1}\oplus P_{i+1}\oplus P_{i+2}\oplus \dots \oplus 
P_{n-1}\oplus P_n. 
\end{displaymath}
The functor $\mathrm{C}_i$ of {\em partial coapproximation} with respect to $Q_i$ is defined as follows:
Given $M\in A\text{-}\mathrm{mod}$, consider a short exact sequence $K\hookrightarrow P\tto M$ with 
projective $P$. Then 
\begin{displaymath}
\mathrm{C}_i(M):=\mathrm{Tr}_{Q_i}(P/\mathrm{Tr}_{Q_i}(K))
\end{displaymath}
and the action on morphisms is defined by first lifting them using projectivity and then restriction.
From \cite[Lemma~9]{KhMa} it follows that $\mathrm{C}_i$ is right exact. The functor $\mathrm{C}_i$ comes
together with a natural transformation $\kappa:\mathrm{C}_i\to\mathrm{Id}_{A\text{-}\mathrm{mod}}$
which is injective on projective modules (note that, if $M$ is projective in the above construction,
then we may choose $K=0$ and $\mathrm{C}_i(M)=\mathrm{Tr}_{Q_i}(M)$). In particular, if 
$\mathrm{Ext}_A^1(L_i,L_i)=0$,  then we have
\begin{displaymath}
\mathrm{Tr}_{Q_i}(P_j)\cong
\begin{cases} 
P_j, \quad &\text{ if } i\neq j;\\ 
\mathrm{Rad}(P_i), &\text{otherwise}.
\end{cases}
\end{displaymath}

\begin{lemma}\label{lem9}
If $\mathrm{Ext}_A^1(L_i,L_i)=0$, then $\mathrm{C}_i\cong \mathrm{F}_i$.
\end{lemma}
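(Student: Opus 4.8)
The plan is to use the reduction principle recalled in Subsection~\ref{s3.4}: two right exact endofunctors of $A\text{-}\mathrm{mod}$ are isomorphic as soon as they take isomorphic values on the bimodule ${}_AA_A$. Now $\mathrm{F}_i=\mathrm{Dp}_{J_i}$ is right exact by definition and $\mathrm{C}_i$ is right exact by \cite[Lemma~9]{KhMa}, so it suffices to exhibit an isomorphism of $A\text{-}A$--bimodules $\mathrm{C}_i(A)\cong\mathrm{F}_i(A)$. On the one side, $\mathrm{F}_i(A)=\mathrm{Su}_{J_i}(A)\otimes_A A\cong J_i$, realised as a sub-bimodule of $A$. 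On the other side, $A$ is projective, so in the defining construction of $\mathrm{C}_i$ we may take $K=0$ and $P=A$, giving $\mathrm{C}_i(A)=\mathrm{Tr}_{Q_i}(A)$. I would therefore reduce the whole statement to the equality of two-sided ideals $\mathrm{Tr}_{Q_i}(A)=J_i$.

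First I would identify $\mathrm{Tr}_{Q_i}(A)$ as an ideal. Fix a decomposition $1=\varepsilon_1+\dots+\varepsilon_n$ into block idempotents, where $\varepsilon_j$ is the sum of the primitive idempotents $e$ with $Ae\cong P_j$, and set $\varepsilon_{\hat\imath}:=1-\varepsilon_i$. Using $\mathrm{Hom}_A(Ae,A)\cong eA$ one computes $\mathrm{Tr}_{P_j}(A)=Ae_jA$, and since the two-sided ideal generated by a primitive idempotent depends only on the isomorphism class of the associated indecomposable projective, this yields $\mathrm{Tr}_{Q_i}(A)=\sum_{j\neq i}A\varepsilon_jA=A\varepsilon_{\hat\imath}A$. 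In particular $\mathrm{Tr}_{Q_i}(A)$ is genuinely a sub-bimodule of $A$, which is what makes the comparison with $J_i$ meaningful.

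Next I would prove $A\varepsilon_{\hat\imath}A=J_i$. Working in the semisimple quotient $A/\mathrm{Rad}(A)\cong\bigoplus_k L_{kk}$, the image of $\varepsilon_{\hat\imath}$ generates exactly $\bigoplus_{k\neq i}L_{kk}=J_i/\mathrm{Rad}(A)$, so $A\varepsilon_{\hat\imath}A+\mathrm{Rad}(A)=J_i$, and it remains to show $\mathrm{Rad}(A)\subseteq A\varepsilon_{\hat\imath}A$. Decomposing $\mathrm{Rad}(A)=\bigoplus_{j,k}\varepsilon_j\mathrm{Rad}(A)\varepsilon_k$, every off-diagonal summand (with $j\neq i$ or $k\neq i$) lies in $A\varepsilon_{\hat\imath}A$ trivially. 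The hypothesis enters precisely for the diagonal block: $\mathrm{Ext}_A^1(L_i,L_i)=0$ forces $\varepsilon_i\mathrm{Rad}(A)\varepsilon_i\subseteq\mathrm{Rad}^2(A)$, whence, writing $R:=\varepsilon_i\mathrm{Rad}(A)\varepsilon_i$ and inserting $1=\sum_k\varepsilon_k$, one gets $R\subseteq\sum_k\varepsilon_i\mathrm{Rad}(A)\varepsilon_k\mathrm{Rad}(A)\varepsilon_i\subseteq A\varepsilon_{\hat\imath}A+R^2$. A nilpotency induction then absorbs $R$ entirely: from $R\subseteq A\varepsilon_{\hat\imath}A+R^m$ and $R^m\subseteq A\varepsilon_{\hat\imath}A+R^{m+1}$ one obtains $R\subseteq A\varepsilon_{\hat\imath}A+R^{m+1}$ for all $m$, and $R^m\subseteq\mathrm{Rad}(A)^m=0$ for $m$ large. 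I expect this diagonal estimate, together with the bookkeeping of the nilpotency induction, to be the main obstacle; once it is in place we conclude $\mathrm{Rad}(A)\subseteq A\varepsilon_{\hat\imath}A$ and hence $A\varepsilon_{\hat\imath}A=J_i$.

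Combining the three identifications gives $\mathrm{C}_i(A)=\mathrm{Tr}_{Q_i}(A)=A\varepsilon_{\hat\imath}A=J_i\cong\mathrm{F}_i(A)$ as bimodules, and the reduction of the first paragraph upgrades this to the desired functor isomorphism $\mathrm{C}_i\cong\mathrm{F}_i$. As a consistency check with~\eqref{eqan1}, the block-wise formula $\mathrm{Tr}_{Q_i}(P_i)\cong\mathrm{Rad}(P_i)$ quoted just before the lemma recovers the left-module description $\mathrm{Tr}_{Q_i}(A)\cong\mathrm{Rad}(P_i)^{\oplus n_i}\oplus\bigoplus_{j\neq i}P_j^{\oplus n_j}$, matching $J_i$ as a left module and corroborating the ideal equality above.
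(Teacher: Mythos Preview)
Your proof is correct and follows the same overall strategy as the paper: reduce to a bimodule isomorphism $\mathrm{C}_i(A)\cong\mathrm{F}_i(A)$ using right exactness, identify $\mathrm{C}_i(A)$ with $\mathrm{Tr}_{Q_i}(A)$ via projectivity of $A$, and then show $\mathrm{Tr}_{Q_i}(A)=J_i$ under the Ext hypothesis.

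The difference lies in how the key equality $\mathrm{Tr}_{Q_i}(A)=J_i$ is justified. The paper simply asserts it, implicitly relying on the displayed formula $\mathrm{Tr}_{Q_i}(P_j)\cong P_j$ for $j\neq i$ and $\mathrm{Tr}_{Q_i}(P_i)\cong\mathrm{Rad}(P_i)$ stated just before the lemma, together with the observation that both $\mathrm{Tr}_{Q_i}(A)$ and $J_i$ sit inside $A$ as the same subspace. You instead give a self-contained ideal-theoretic argument: first $\mathrm{Tr}_{Q_i}(A)=A\varepsilon_{\hat\imath}A$ via $\mathrm{Hom}_A(Ae,A)\cong eA$, then $A\varepsilon_{\hat\imath}A=J_i$ via the radical estimate $\varepsilon_i\mathrm{Rad}(A)\varepsilon_i\subseteq\mathrm{Rad}^2(A)$ (which is exactly what $\mathrm{Ext}^1_A(L_i,L_i)=0$ says) and a clean nilpotency induction. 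Your route is more explicit and makes the role of the Ext hypothesis transparent; the paper's is shorter but leaves the verification to the reader.
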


\begin{proof}
As both functors are right exact, it is enough to check the bimodule isomorphism 
$\mathrm{C}_i(A)\cong \mathrm{F}_i(A)$. Since ${}_AA$ is projective, we have 
$\mathrm{C}_i(A)=\mathrm{Tr}_{Q_i}(A)$. At the same time, if $\mathrm{Ext}_A^1(L_i,L_i)=0$, then
$\mathrm{Tr}_{Q_i}(A)=J_i$. As the action of $\mathrm{C}_i$  on morphisms is defined
via restriction, it follows that $\mathrm{C}_i(A)\cong J_i$ as a bimodule. This completes the proof.
\end{proof}

\section{Some preliminaries on $2$-categories}\label{s4}

\subsection{Finite and finitary $2$-categories}\label{s4.1}

We refer the reader to \cite{Le,McL,Ma} for generalities on $2$-categories. Denote by $\mathbf{Cat}$ the category
of all small categories. A {\em $2$-category} is a category enriched over $\mathbf{Cat}$. A $2$-category $\cC$ is 
called {\em finite} if it has finitely many objects, finitely many $1$-morphisms and finitely many $2$-morphisms.

Recall from \cite{MM1}  that a $2$-category $\cC$ is called {\em finitary} over $\Bbbk$ provided that
\begin{itemize}
\item $\cC$ has finitely many objects;
\item each $\cC(\mathtt{i},\mathtt{j})$ is an idempotent split additive $\Bbbk$-linear category with finitely
many isomorphism classes of indecomposable objects and finite dimensional spaces of morphisms;
\item all compositions are biadditive and also $\Bbbk$-bilinear whenever the latter makes sense;
\item all identity $1$-morphisms are indecomposable.
\end{itemize}

For an object ${\mathtt{i}}$ of a $2$-category we denote by $\mathbbm{1}_{\mathtt{i}}$ the corresponding
identity $1$-morphism.

\subsection{The multisemigroup of a finitary $2$-category}\label{s4.2}

For a finitary $2$-category $\cC$ denote by $\mathcal{S}_{\ccC}$ the set of isomorphism classes of 
indecomposable $1$-morphisms in $\cC$ with an added external zero element $0$. 
By \cite[Subsection~3.3]{MM2}, the finite set 
$\mathcal{S}_{\ccC}$ has the natural structure of a {\em multisemigroup} given for 
$[\mathrm{F}],[\mathrm{G}]\in \mathcal{S}_{\ccC}$ by defining
\begin{displaymath}
[\mathrm{F}]\star[\mathrm{G}]:=
\begin{cases}
\left\{[\mathrm{H}]\,:\, \mathrm{H}\text{ is isomorphic to a direct summand of }
\mathrm{F}\circ\mathrm{G}\right\}, & \mathrm{F}\circ\mathrm{G}\neq 0;\\
0, & \text{otherwise}.
\end{cases}
\end{displaymath}
We refer the reader to \cite{KuMa} for more details on multisemigroups.

\subsection{$\Bbbk$-linearization of finite categories}\label{s4.3}

For a set $X$ denote by $\Bbbk[X]$ the $\Bbbk$-vector space of all formal linear combinations of
elements in $X$ with coefficients in $\Bbbk$. Then $X$ is naturally identified with a basis in $\Bbbk[X]$.
Note that $\Bbbk[X]=\{0\}$ if $X=\varnothing$.

Let $\mathcal{C}$ be a finite category, that is a category with finitely many objects and morphisms. 
The {\em $\Bbbk$-linearization} of $\mathcal{C}$ is the category $\mathcal{C}_{\Bbbk}$ defined as follows:
\begin{itemize}
\item $\mathcal{C}_{\Bbbk}$ and $\mathcal{C}$ have the same objects;
\item $\mathcal{C}_{\Bbbk}(\mathtt{i},\mathtt{j}):=\Bbbk[\mathcal{C}(\mathtt{i},\mathtt{j})]$;
\item composition in $\mathcal{C}_{\Bbbk}$ is induced from composition in $\mathcal{C}$ by $\Bbbk$-bilinearity.
\end{itemize}
The {\em additive $\Bbbk$-linearization} $\mathcal{C}_{\Bbbk}^{\oplus}$ of $\mathcal{C}$ is 
then the ``additive closure'' of $\mathcal{C}_{\Bbbk}$ in the following sense:
\begin{itemize}
\item objects in $\mathcal{C}_{\Bbbk}^{\oplus}$ are all expressions of the form $\mathtt{i}_1\oplus
\mathtt{i}_2\oplus\dots\oplus\mathtt{i}_k$, where $k\in\{0,1,2,\dots\}$ and all 
$\mathtt{i}_i$ are objects in $\mathcal{C}_{\Bbbk}$;
\item the set  $\mathcal{C}_{\Bbbk}^{\oplus}(\mathtt{i}_1\oplus
\mathtt{i}_2\oplus\dots\oplus\mathtt{i}_k,\mathtt{j}_1\oplus
\mathtt{j}_2\oplus\dots\oplus\mathtt{j}_m)$ consists of all matrices of the form
\begin{displaymath}
\left(
\begin{array}{cccc}
f_{11}& f_{12}&\dots& f_{1k} \\
f_{21}& f_{22}&\dots& f_{2k} \\
\vdots& \vdots&\ddots& \vdots \\
f_{m1}& f_{m2}&\dots& f_{mk}  
\end{array}
\right) 
\end{displaymath}
where $f_{st}\in\mathcal{C}_{\Bbbk}(\mathtt{i}_t,\mathtt{j}_s)$;
\item composition in $\mathcal{C}_{\Bbbk}^{\oplus}$ is given by matrix multiplication.
\end{itemize}

\subsection{Finitarization of finite $2$-categories}\label{s4.4}

Let $\cC$ be a finite $2$-category. Then the {\em finitarization} of $\cC$ over $\Bbbk$ is the $2$-category
$\cC_{\Bbbk}$ defined as follows:
\begin{itemize}
\item $\cC_{\Bbbk}$ has the same objects as $\cC$;
\item $\cC_{\Bbbk}(\mathtt{i},\mathtt{j}):=\cC(\mathtt{i},\mathtt{j})_{\Bbbk}^{\oplus}$;
\item composition in $\cC_{\Bbbk}$ is induced from composition in $\cC$ using biadditivity and $\Bbbk$-bilinearity.
\end{itemize}
Directly from the definition it follows that $\cC_{\Bbbk}$ is finitary if and only if for each
$1$-morphism $f$ in $\cC$ the endomorphism algebra
$\mathrm{End}_{\ccC_{\Bbbk}}(f) \cong \Bbbk[\mathrm{End}_{\ccC}(f)]$ is local.

\subsection{Two $2$-categories associated with an ordered monoid}\label{s4.5}

Let $(S,e,\cdot)$ be a finite monoid with a fixed admissible reflexive partial pre-order $\preceq$. Admissibility 
means that $s\preceq t$ implies both $sr\preceq tr$ and $rs\preceq rt$ for all $s,t,r\in S$. In this situation we
may define a finite $2$-category $\cC^{S}$ as follows:
\begin{itemize}
\item $\cC^{S}$ has one object $\clubsuit$;
\item $1$-morphisms in $\cC^{S}(\clubsuit,\clubsuit)$ are elements in $S$ and the horizontal 
composition of $1$-morphisms is given by multiplication in $S$;
\item for two $1$-morphisms $s$ and $t$, the set of $2$-morphisms from $s$ to $t$ is empty if $s\not\preceq t$
and contains one element, denoted $(s,t)$, otherwise (note that in this case all compositions of $2$-morphisms
are automatically uniquely defined).
\end{itemize}
The finitarization $\cC^S_{\Bbbk}$ of $\cC^{S}$ is then a finitary $2$-category as the endomorphism algebra
of each $1$-morphism is just $\Bbbk$.

\section{$2$-categories of idealization functors and dual projection functors}\label{s5}

\subsection{Monoid of two-sided ideals}\label{s5.1}

The set $\mathcal{I}$ of all two-sided ideals in $A$ has the natural structure of a monoid given by
multiplication of ideals $(I,J)\mapsto IJ$. The identity element of $\mathcal{I}$ is $A$ and the zero
element is the zero ideal. We note the following:

\begin{lemma}\label{lem10}
If $\dim_{\Bbbk}\mathrm{Hom}_A(P_i,P_j)\leq 1$ for all $i,j\in\{1,2,\dots,n\}$, then
$|\mathcal{I}|<\infty$.
\end{lemma}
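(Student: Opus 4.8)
The plan is to exploit the two-sided Peirce decomposition of $A$ coming from a complete set of primitive orthogonal idempotents, and to observe that the hypothesis forces every two-sided ideal to be homogeneous with respect to this decomposition, leaving only finitely many possibilities.

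First I would fix a complete set $\{f_1,f_2,\dots,f_m\}$ of primitive orthogonal idempotents of $A$, so that $1=\sum_{a=1}^m f_a$ and each $Af_a$ is an indecomposable projective module; concretely $m=\sum_{i=1}^n n_i$ and each $Af_a$ is isomorphic to some $P_i$. Using the standard isomorphism $\mathrm{Hom}_A(Af_a,Af_b)\cong f_aAf_b$ together with $Af_a\cong P_i$, $Af_b\cong P_j$, the hypothesis $\dim_{\Bbbk}\mathrm{Hom}_A(P_i,P_j)\le 1$ translates directly into $\dim_{\Bbbk}f_aAf_b\le 1$ for all $a,b$. This yields the Peirce decomposition $A=\bigoplus_{a,b=1}^m f_aAf_b$ into summands each of dimension at most one.

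Next I would show that every two-sided ideal is homogeneous for this decomposition. For a two-sided ideal $I$, writing $x=\bigl(\sum_a f_a\bigr)x\bigl(\sum_b f_b\bigr)=\sum_{a,b}f_axf_b$ and using $f_aIf_b\subseteq I$ gives $I=\sum_{a,b}f_aIf_b$; since the subspaces $f_aAf_b$ are independent, this sum is direct, and one checks readily that $f_aIf_b=I\cap f_aAf_b$. Thus $I$ is completely determined by the family of subspaces $I\cap f_aAf_b\subseteq f_aAf_b$. As each $f_aAf_b$ is at most one-dimensional, each $I\cap f_aAf_b$ is either $0$ or all of $f_aAf_b$, leaving at most two choices for each pair $(a,b)$. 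Hence $|\mathcal{I}|\le 2^{m^2}<\infty$, as required.

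The entire content sits in the homogeneity step, and the one point where care is genuinely needed is that \emph{two}-sidedness is essential: a one-sided ideal decomposes only as $\bigoplus_a f_aI$ (respectively $\bigoplus_b If_b$), and the summands $f_aA$ need not be one-dimensional, so one-sided ideals really can form infinite families under the same hypothesis. Cutting with the idempotents on both sides is exactly what collapses each homogeneous component to the two scalar choices $\{0,\,f_aAf_b\}$ and produces the finite bound.
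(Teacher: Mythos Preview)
Your proof is correct and rests on the same underlying idea as the paper's, but packaged differently. The paper argues via the bimodule category: it computes $\mathrm{Hom}_{A\text{-}A}(Aa\otimes_{\Bbbk}bA,A)\cong aAb$ for primitive idempotents $a,b$, deduces that each simple $A$-$A$-bimodule $L_{ij}$ has composition multiplicity at most $1$ in ${}_AA_A$, and concludes that every subbimodule is determined by its set of composition factors, giving $|\mathcal{I}|\le 2^{\dim A}$. Your Peirce-decomposition argument is a more elementary, purely ring-theoretic version of the same mechanism: cutting on both sides by primitive idempotents replaces ``simple bimodule occurring with multiplicity one'' by ``one-dimensional Peirce component $f_aAf_b$,'' and homogeneity of two-sided ideals then does the work that the paper assigns to composition series. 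What you gain is that you never need to mention projective covers or composition multiplicities in $A\text{-}\mathrm{mod}\text{-}A$; what you trade away is only the slightly sharper bound $2^{\dim A}$ in place of your $2^{m^2}$.
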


\begin{proof}
If $a,b\in A$ are idempotents, then, by adjunction, we have
\begin{displaymath}
\mathrm{Hom}_{A\text{-}A}(Aa\otimes_{\Bbbk}bA,A)\cong
\mathrm{Hom}_{A\text{-}}(Aa,Ab)\cong
\mathrm{Hom}_{\Bbbk}(\Bbbk,aAb)=aAb.
\end{displaymath}
For $i,j\in\{1,2,\dots,n\}$, the projective cover of the simple bimodule
$L_{ij}$ in $A\text{-}\mathrm{mod}\text{-}A$ is isomorphic
to $P_i\otimes_{\Bbbk}I_j^*$ and hence from our assumptions it follows that the composition
multiplicity of $L_{ij}$ in ${}_AA_A$ is at most $1$. This means that each subbimodule of ${}_AA_A$ is 
uniquely determined by its composition subquotients (and equals the sum of images of unique up to scalar
nonzero homomorphisms from the projective covers of these simple subquotients). Therefore 
$|\mathcal{I}|\leq 2^{\dim(A)}$.
\end{proof}

\begin{corollary}\label{cor11}
If $A$ is the path algebra of a tree quiver
or the incidence algebra of a finite poset, then $|\mathcal{I}|<\infty$. 
\end{corollary}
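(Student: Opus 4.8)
The plan is to reduce both cases to Lemma~\ref{lem10} by verifying its hypothesis, namely that $\dim_{\Bbbk}\mathrm{Hom}_A(P_i,P_j)\leq 1$ for all $i,j$. Writing $P_i=Ae_i$ for primitive idempotents $e_i$, I would use the standard identification $\mathrm{Hom}_A(Ae_i,Ae_j)\cong e_iAe_j$, so that the whole problem reduces to bounding the dimension of the corner $e_iAe_j$ by $1$ in each of the two situations. Once this is done, finiteness of $\mathcal{I}$ is immediate from the bound $|\mathcal{I}|\leq 2^{\dim(A)}$ already established in the proof of Lemma~\ref{lem10}.

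For the path algebra $A=\Bbbk Q$ of a tree quiver $Q$, I would recall that $e_iAe_j$ has a basis consisting of all directed paths in $Q$ between the vertices $i$ and $j$ (in the direction selected by the idempotents; note that such a quiver is acyclic, so $A$ is finite dimensional). Since the underlying undirected graph of $Q$ is a tree, there is a unique undirected path between any two vertices, and hence at most one directed path compatible with the orientation. Thus $\dim_{\Bbbk}e_iAe_j\leq 1$, and Lemma~\ref{lem10} applies.

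For the incidence algebra $A$ of a finite poset $(P,\leq)$, I would recall that $A$ has a basis $\{e_{xy}: x\leq y\}$ indexed by the order relations, where the elements $e_{xx}$ are the primitive idempotents corresponding to the modules $P_i$. The corner $e_xAe_y$ is then spanned by the single basis element (the interval from $y$ to $x$) when the relevant inequality holds, and is zero otherwise. In either case $\dim_{\Bbbk}e_xAe_y\leq 1$, so Lemma~\ref{lem10} again gives the claim.

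There is essentially no serious obstacle here: the content is entirely in the combinatorial observation that a tree admits at most one path between two vertices, so that no multiplicity arises, together with the analogous remark that an interval in a poset is uniquely determined by its endpoints. The only point requiring care is keeping the left/right module and source/target conventions consistent when identifying $\mathrm{Hom}_A(P_i,P_j)$ with the appropriate corner; with that fixed, both verifications are routine and the corollary follows at once from Lemma~\ref{lem10}.
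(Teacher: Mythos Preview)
Your proposal is correct and follows exactly the paper's approach: verify the hypothesis of Lemma~\ref{lem10} in each of the two cases and conclude. The paper merely declares the condition $\dim_{\Bbbk}\mathrm{Hom}_A(P_i,P_j)\leq 1$ to be ``straightforward'' in both situations, whereas you spell out the combinatorial reasons (uniqueness of paths in a tree, uniqueness of intervals in a poset), which is a welcome elaboration but not a different argument.
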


\begin{proof}
Both for the path algebra of a tree quiver and for the incidence algebra of a finite poset, the condition 
$\dim_{\Bbbk}\mathrm{Hom}_A(P_i,P_j)\leq 1$ for all $i,j\in\{1,2,\dots,n\}$
is straightforward and thus the statement follows from Lemma~\ref{lem10}.
\end{proof}

The monoid $\mathcal{I}$ is naturally ordered by inclusions, moreover, this order is obviously admissible.

\subsection{A $2$-action of $\cC^{\mathcal{I}}$ on $A\text{-}\mathrm{mod}$ by idealization functors}\label{s5.2}

We define a $2$-action of the $2$-category $\cC^{\mathcal{I}}$ associated to the ordered monoid 
$(\mathcal{I},A,\cdot,\subset)$ on $A\text{-}\mathrm{mod}$ as follows:
\begin{itemize}
\item the element $I\in \mathcal{I}$ acts as the functor $\mathrm{Su}_I$;
\item for $I\subset J$, the $2$-morphism $(I,J)$ acts as the canonical inclusion
$\mathrm{Su}_I\hookrightarrow \mathrm{Su}_J$.
\end{itemize}
This is a strict $2$-action because of \eqref{eq1}.

This $2$-action extends to a $2$-action of $\cC^{\mathcal{I}}_{\Bbbk}$ on $A\text{-}\mathrm{mod}$ in the
obvious way. Note that this $2$-action is clearly faithful both on the level of $1$-morphisms and on the
level of $2$-morphisms. However, this $2$-action is not full on the level of $2$-morphisms in general.
Indeed, in case the algebra $A$ has a non-trivial center, the $1$-dimensional endomorphism algebra of the
identity $1$-morphism in $\cC^{\mathcal{I}}_{\Bbbk}$ cannot surject onto the non-trivial endomorphism algebra
of the identity functor of $A\text{-}\mathrm{mod}$.

\subsection{A $2$-action of $\cC^{\mathcal{I}}$ on $A\text{-}\mathrm{mod}$ by dual projection functors}\label{s5.3}

The main disadvantage of the $2$-action defined in Subsection~\ref{s5.2} is the fact that the functors
$\mathrm{Su}_J$ are not exact from any side in general. In particular, they do not induce any reasonable
maps on the Grothendieck group of $A\text{-}\mathrm{mod}$. To overcome this problem one needs to define another
action and dual projection functors are reasonable candidates. However, there is a price to pay. 
Firstly, in order to avoid weak $2$-actions (where equalities of functors are changed 
to isomorphisms with some coherency conditions, see e.g. \cite{Le}), 
we will have to change $A\text{-}\mathrm{mod}$ to an equivalent category.
Secondly, we will have to restrict to hereditary algebras.

Denote by $\overline{A\text{-}\mathrm{proj}}$ the category whose objects are diagrams 
$P\overset{f}{\longrightarrow}Q$ over $A\text{-}\mathrm{proj}$ and whose morphisms are equivalence
classes of solid commutative diagrams 
\begin{displaymath}
\xymatrix{
P\ar[rr]^{f}\ar[d]_{g_2}&&Q\ar[d]^{g_1}\ar@{-->}[dll]_{h}\\
P'\ar[rr]^{f'}&&Q'\\
}
\end{displaymath}
modulo the equivalence relation defined as follows: the solid diagram is equivalent to zero provided that there
exists a dashed map $h$ as indicated on the diagram such that $g_1=f'h$. The category 
$\overline{A\text{-}\mathrm{proj}}$ is abelian and, moreover, equivalent to $A\text{-}\mathrm{mod}$, see \cite{Fr}.
This construction is called {\em abelianization} in \cite{MM1,MM2}.

If $A$ is hereditary, then each $\mathrm{Su}_I$ preserves $A\text{-}\mathrm{proj}$ and hence the $2$-actions of both 
$\cC^{\mathcal{I}}$ and $\cC^{\mathcal{I}}_{\Bbbk}$ defined in Subsection~\ref{s5.2} extends 
component-wise to $2$-actions of both these categories on $\overline{A\text{-}\mathrm{proj}}$.
By construction, this is not an action on $A\text{-}\mathrm{mod}$ but on a category which is only equivalent to
$A\text{-}\mathrm{mod}$. Moreover, the action is designed so that the ideal $I$ acts by a right exact functor
which is isomorphic to $\mathrm{Su}_I$ when restricted to $A\text{-}\mathrm{proj}$. This means that this is
a $2$-action by dual projection functors.

\subsection{The $2$-category of idealization functors}\label{s5.4}

The $2$-action defined in Subsection~\ref{s5.2} suggest the following definition. Fix a small category $\mathcal{C}$
equivalent to $A\text{-}\mathrm{mod}$. Define the $2$-category $\cQ=\cQ(A,\mathcal{C})$ in the following way:
\begin{itemize}
\item $\cQ$ has one object $\clubsuit$ (which we identify with $\mathcal{C}$);
\item $1$-morphisms in $\cQ$ are endofunctors of $\mathcal{C}$ which belong to the additive closure generated by 
the identity functor and all idealization functors;
\item $2$-morphisms in $\cQ$ are all natural transformations of functors;
\item composition in  $\cQ$ is induced from $\mathbf{Cat}$.
\end{itemize}
Our main observation here is the following:

\begin{proposition}\label{prop15}
If $A$ is connected and $|\mathcal{I}|<\infty$, then $\cQ$ is a finitary $2$-category.
\end{proposition}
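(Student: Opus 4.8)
The plan is to verify, one at a time, the four defining properties of a finitary $2$-category listed in Subsection~\ref{s4.1}. Since $\cQ$ has the single object $\clubsuit$, the first condition is immediate and everything reduces to understanding the endomorphism category $\cQ(\clubsuit,\clubsuit)$. By construction this is the full subcategory of the abelian category $\mathcal{AF}_{A}$ consisting of all functors isomorphic to direct summands of finite direct sums of idealization functors; being closed under finite direct sums and direct summands inside an abelian category, it is an idempotent split additive $\Bbbk$-linear category, and all composition maps (vertical and horizontal composition of natural transformations) are $\Bbbk$-bilinear because they already are in $\mathcal{AF}_{A}$. I would also record that $\cQ$ is genuinely closed under horizontal composition of $1$-morphisms: by \eqref{eq1} we have $\mathrm{Su}_I\circ\mathrm{Su}_J=\mathrm{Su}_{IJ}$, and composition distributes over direct sums, so composing objects of the additive closure lands again in the additive closure.

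The technical heart of the argument is the finite-dimensionality of the morphism spaces, and the key observation is a rigidity property: a natural transformation $\varphi\colon\mathrm{Su}_I\to\mathrm{Su}_J$ is completely determined by its single component $\varphi_A\colon \mathrm{Su}_I(A)=I\to \mathrm{Su}_J(A)=J$. First I would check that $\varphi_A$ is a homomorphism of $A\text{-}A$--bimodules: it is a left module map since it is a morphism in $A\text{-}\mathrm{mod}$, and naturality applied to the right multiplication maps $A\to A$ forces it to be a right module map as well. To recover $\varphi$ from $\varphi_A$, I would use naturality with respect to the coordinate embeddings and projections of $A^{\oplus k}$ to obtain $\varphi_{A^{\oplus k}}=\mathrm{diag}(\varphi_A,\dots,\varphi_A)$, and then, for an arbitrary $M$, choose an epimorphism $p\colon A^{\oplus k}\tto M$; since $\mathrm{Su}_I$ preserves epimorphisms by Lemma~\ref{lem3}\eqref{lem3.2}, the map $\mathrm{Su}_I(p)$ is surjective, so the naturality relation $\varphi_M\circ\mathrm{Su}_I(p)=\mathrm{Su}_J(p)\circ\varphi_{A^{\oplus k}}$ pins down $\varphi_M$ uniquely. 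This exhibits an injection $\mathrm{Hom}_{\cQ}(\mathrm{Su}_I,\mathrm{Su}_J)\hookrightarrow\mathrm{Hom}_{A\text{-}A}(I,J)$, whose target is finite dimensional because $I$ and $J$ are finite dimensional bimodules.

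With finite-dimensional morphism spaces in hand, finiteness of the set of indecomposables follows easily. Since $|\mathcal{I}|<\infty$, the additive closure is generated by the finitely many idealization functors $\mathrm{Su}_I$, $I\in\mathcal{I}$. Each $\mathrm{Su}_I$ has finite-dimensional endomorphism algebra by the previous paragraph, so the Krull--Schmidt theorem decomposes it into finitely many indecomposables with local endomorphism rings; every indecomposable $1$-morphism of $\cQ$ is then a summand of some $\mathrm{Su}_I$, leaving only finitely many isomorphism classes. Finally I would treat the most hypothesis-sensitive point, that the identity $1$-morphism $\mathrm{Id}=\mathrm{Su}_A$ is indecomposable. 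By the rigidity above its endomorphism algebra is $\mathrm{Hom}_{A\text{-}A}(A,A)\cong Z(A)$, the center of $A$; the idempotents of $Z(A)$ are exactly the central idempotents of $A$, so connectedness of $A$ leaves none that are nontrivial, and a finite-dimensional commutative $\Bbbk$-algebra without nontrivial idempotents is local. Hence $\mathrm{End}_{\cQ}(\mathrm{Id})$ is local and $\mathrm{Id}$ is indecomposable. I expect the rigidity step of the second paragraph---collapsing an a priori huge space of natural transformations onto a finite-dimensional space of bimodule maps---to be the main obstacle; once it is in place the other verifications are routine, and the connectedness hypothesis enters only through the final centrality computation.
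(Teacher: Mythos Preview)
Your proof is correct and follows essentially the same route as the paper's: both hinge on the observation that a natural transformation between idealization functors is determined by its values on projectives, using Lemma~\ref{lem3}\eqref{lem3.2} to push down along an epimorphism from a free module. Your version is somewhat more detailed than the paper's---you make explicit the injection into $\mathrm{Hom}_{A\text{-}A}(I,J)$, invoke Krull--Schmidt for the finiteness of indecomposables, and spell out the indecomposability of $\mathrm{Id}$ via $Z(A)$---whereas the paper simply asserts that connectedness gives indecomposability and that $|\mathcal{I}|<\infty$ bounds the indecomposables. One small point: your rigidity argument only yields an \emph{injection} $\mathrm{End}_{\cQ}(\mathrm{Id})\hookrightarrow Z(A)$, not the equality you then use; but this is harmless, since a subalgebra of $Z(A)$ still has no nontrivial idempotents when $A$ is connected (and in any case the reverse inclusion is the standard fact that central elements act naturally).
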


\begin{proof}
Connectedness of $A$ ensures that the identity $1$-morphism $\mathbbm{1}_{\clubsuit}$ is indecomposable.
Clearly, $\cQ$ has finitely many objects. As $|\mathcal{I}|<\infty$, the $2$-category $\cQ$ 
has finitely many isomorphism classes of
indecomposable $1$-morphisms. It remains to check that all spaces of $2$-morphisms are finite dimensional.

Let $I$ and $J$ be two ideals in $A$. Let $\eta:\mathrm{Su}_I\to\mathrm{Su}_J$ be a natural transformation.
We claim that values of $\eta$ on indecomposable projective $A$-modules determine $\eta$ uniquely. 
Indeed, by additivity these values determine all values of $\eta$ on all projective $A$-modules.
For $M\in A\text{-}\mathrm{mod}$, choose some projective cover $f:P\tto M$. Then,
by Lemma~\ref{lem3}\eqref{lem3.2}, we have the commutative diagram:
\begin{displaymath}
\xymatrix{ 
\mathrm{Su}_I(P)\ar@{->>}[rr]^{\mathrm{Su}_I(f)}\ar[d]_{\eta_P}&&\mathrm{Su}_I(M)\ar[d]^{\eta_M}\\
\mathrm{Su}_J(P)\ar@{->>}[rr]^{\mathrm{Su}_J(f)}&&\mathrm{Su}_J(M)\\
}
\end{displaymath}
From this diagram we see that  $\eta_M$ is uniquely determined by $\eta_P$.
Consequently, all spaces of $2$-morphisms in $\cQ$ are finite dimensional.
\end{proof}

The fact that $\mathrm{Su}_I$ is not right exact implies that, potentially, there might exist a 
natural transformation $\eta\vert_{A\text{-}\mathrm{proj}}:
\mathrm{Su}_I\vert_{A\text{-}\mathrm{proj}}\to\mathrm{Su}_J\vert_{A\text{-}\mathrm{proj}}$ which cannot be
extended to a natural transformation $\eta:\mathrm{Su}_I\to\mathrm{Su}_J$.
Note also that in the case when $A$ has finite representation type the space of natural transformations 
between any two additive endofunctors on $A\text{-}\mathrm{mod}$ is finite dimensional (since an additive
endofunctor is uniquely determined, up to isomorphism, by its action on indecomposable objects and
morphisms between them and in the case when $A$ has finite representation type there are only finitely many 
indecomposable $A$-modules).

\subsection{The $2$-category of dual projection functors}\label{s5.5}

The $2$-action defined in Subsection~\ref{s5.3} suggest the following definition. Assume that $A$ is hereditary.
Fix a small category $\mathcal{C}$ equivalent to $A\text{-}\mathrm{mod}$. Define the $2$-category 
$\cP=\cP(A,\mathcal{C})$ in the following way:
\begin{itemize}
\item $\cP$ has one object $\clubsuit$ (which we identify with $\mathcal{C}$);
\item $1$-morphisms in $\cP$ are endofunctors of $\mathcal{C}$ which belong to the additive closure generated by 
the identity functor and all dual projection functors;
\item $2$-morphisms in $\cP$ are all natural transformations of functors;
\item composition in  $\cP$ is induced from $\mathbf{Cat}$.
\end{itemize}
Our main observation here is the following:

\begin{proposition}\label{prop16}
If $A$ is hereditary, connected and $|\mathcal{I}|<\infty$, then $\cP$ is a finitary $2$-category.
\end{proposition}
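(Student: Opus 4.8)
The plan is to largely mirror the proof of Proposition~\ref{prop15}, since the axioms of a finitary $2$-category must be checked one by one and three of the four are essentially identical in the two settings. First I would observe that, since $A$ is hereditary, the conclusion of Lemma~\ref{lem6} guarantees that each $\mathrm{Su}_I$ preserves $A\text{-}\mathrm{proj}$, so the component-wise action on $\overline{A\text{-}\mathrm{proj}}$ described in Subsection~\ref{s5.3} is well defined and $\cP$ really is a $2$-category of endofunctors of a category equivalent to $A\text{-}\mathrm{mod}$. As in Proposition~\ref{prop15}, connectedness of $A$ forces the identity $1$-morphism $\mathbbm{1}_{\clubsuit}$ to be indecomposable, and $\cP$ trivially has finitely many objects (just one). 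The hypothesis $|\mathcal{I}|<\infty$ then bounds the number of dual projection functors $\mathrm{Dp}_I$, and hence the number of isomorphism classes of indecomposable $1$-morphisms in the additive closure, so this axiom holds as well.

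The remaining point is finite-dimensionality of the $2$-morphism spaces, i.e.\ of the spaces of natural transformations between dual projection functors. Here the argument is actually cleaner than in the idealization case. The key simplification is that, by construction, a dual projection functor $\mathrm{Dp}_I$ is right exact and, by Lemma~\ref{lem6}, in fact \emph{exact} for hereditary $A$. Thus I would argue that any natural transformation $\eta:\mathrm{Dp}_I\to\mathrm{Dp}_J$ is uniquely determined by its values on the projective generators. Concretely, given $M\in A\text{-}\mathrm{mod}$ with projective presentation $P_1\to P_0\tto M$, right exactness of both functors yields a commutative diagram with exact rows
\begin{displaymath}
\xymatrix{
\mathrm{Dp}_I(P_1)\ar[rr]\ar[d]_{\eta_{P_1}}&&\mathrm{Dp}_I(P_0)\ar@{->>}[rr]\ar[d]_{\eta_{P_0}}&&\mathrm{Dp}_I(M)\ar[d]^{\eta_M}\\
\mathrm{Dp}_J(P_1)\ar[rr]&&\mathrm{Dp}_J(P_0)\ar@{->>}[rr]&&\mathrm{Dp}_J(M)\\
}
\end{displaymath}
from which $\eta_M$ is forced by $\eta_{P_0}$ (and $\eta_{P_1}$) via the cokernel universal property. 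By additivity, the values on the finitely many indecomposable projectives $P_1,\dots,P_n$ determine $\eta$ completely, and each of those values lives in the finite-dimensional space $\mathrm{Hom}_A(\mathrm{Dp}_I(P_k),\mathrm{Dp}_J(P_k))$. Hence every $2$-morphism space is finite dimensional.

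The main point worth care is the contrast with the idealization case flagged in the remark after Proposition~\ref{prop15}: there, because $\mathrm{Su}_I$ is only a subfunctor of the identity and not right exact, a transformation defined on $A\text{-}\mathrm{proj}$ need not extend, and one only gets an injectivity (determination) statement rather than a genuine equivalence with transformations of the restricted functors. For dual projection functors this worry evaporates precisely because right exactness makes the restriction functor to $A\text{-}\mathrm{proj}$ fully faithful on the relevant Hom-spaces, so I expect no obstacle here beyond recording that exactness is what upgrades the Proposition~\ref{prop15} argument. The one bookkeeping subtlety is that we are working inside $\overline{A\text{-}\mathrm{proj}}$ rather than $A\text{-}\mathrm{mod}$ directly; I would note that the equivalence $\overline{A\text{-}\mathrm{proj}}\simeq A\text{-}\mathrm{mod}$ from \cite{Fr} transports the finite-dimensionality statement verbatim, so the finitarity axioms all hold and $\cP$ is a finitary $2$-category.
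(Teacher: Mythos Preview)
Your proof is correct and follows essentially the same structure as the paper's. The only difference is cosmetic: for the finite-dimensionality of $2$-morphism spaces, the paper invokes directly that a right exact endofunctor of $A\text{-}\mathrm{mod}$ is isomorphic to $\mathrm{F}(A)\otimes_A{}_-$ for a finite-dimensional bimodule, so natural transformations are just bimodule homomorphisms between finite-dimensional bimodules; you instead unwind this by showing a natural transformation is determined by its values on the indecomposable projectives, which amounts to the same thing. Note that only right exactness is needed here, not the full exactness from Lemma~\ref{lem6}.
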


\begin{proof}
Similarly to the proof of Proposition~\ref{prop15}, the $2$-category $\cP$ has one object, finitely
many isomorphism classes of indecomposable $1$-morphisms thanks to the assumption
$|\mathcal{I}|<\infty$, and indecomposable identity $1$-morphism $\mathbbm{1}_{\clubsuit}$ thanks to the
assumption that $A$ is connected. Spaces of $2$-morphisms are finite dimensional as
projection functors are right exact and hence are given by tensoring with finite dimensional
bimodules which yields that spaces of $2$-morphisms are just bimodule homomorphisms 
between these finite dimensional bimodules.
\end{proof}

\subsection{Decategorification and categorification}\label{s5.6}

Let $\cC$ be a finitary $2$-category. Then the {\em decategorification} of $\cC$ is the ($1$-)category $[\cC]$
defined as follows.
\begin{itemize}
\item $[\cC]$ has same objects as $\cC$;
\item for all $\mathtt{i},\mathtt{j}\in\cC$ the morphism set $[\cC](\mathtt{i},\mathtt{j})$ is defined to be the
split Grothendieck group $[\cC(\mathtt{i},\mathtt{j})]_{\oplus}$ of the additive category $\cC(\mathtt{i},\mathtt{j})$;
\item composition in $[\cC]$ is induced from composition in $\cC$.
\end{itemize}

Given a $2$-functor $\Phi$ from $\cC$ to the $2$-category of additive categories, taking the split Grothendieck group
for each $\Phi(\mathtt{i})$ induces a functor $[\Phi]$ from $[\cC]$ to $\mathbf{Cat}$ which is called the
{\em decategorification} of $\Phi$. 

Given a $2$-functor $\Phi$ from $\cC$ to the $2$-category of abelian categories and exact functors, taking the
usual Grothendieck group for each $\Phi(\mathtt{i})$ induces a functor $[\Phi]$ from $[\cC]$ to $\mathbf{Cat}$
which is also called the {\em decategorification} of $\Phi$.

Conversely, the $2$-category $\cC$ is called a {\em categorification} of the category $[\cC]$ and the $2$-functor
$\Phi$ is called a {\em categorification} of the functor $\Phi$. We refer to \cite[Section~1]{Ma} 
for more details and examples.

\section{Indecomposable summands of dual projection functors for path algebras of admissible trees}\label{s7}

In this section we study both the monoid $\mathcal{I}$ and the multisemigroup $\mathcal{S}_{\ccP}$ 
in case $A$ is the path algebra of the quiver $Q$ given by an admissible orientation of a tree.

\subsection{Categorification of the Catalan monoid}\label{s7.1}

To start with,  we briefly recall the main results from \cite{GrMa}. Let $A$ be the path algebra of the 
following quiver
\begin{equation}\label{eq2}
\xymatrix{
1\ar[rr]&&2\ar[rr] &&3 \ar[rr]&& \dots\ar[rr]&& m.
}
\end{equation}
The main result of \cite{GrMa} asserts that the ring $[\cP](\clubsuit,\clubsuit)$ in the 
corresponding decategorification
is isomorphic to the integral monoid algebra of the monoid $\mathtt{C}_{m+1}$ of all order preserving and 
decreasing transformations of $\{1,2,\dots,m,m+1\}$, which is also known as the {\em Catalan monoid}. 
Moreover, the monoid 
$\mathtt{C}_{m+1}$ is an ordered monoid and the $2$-category $\cP$ is biequivalent to the corresponding $2$-category 
$\cC^{\mathtt{C}_{m+1}}_{\Bbbk}$. We can also observe that in this case the multisemigroup 
$\mathcal{S}_{\ccP}$ is a usual monoid (i.e. the operation in $\mathcal{S}_{\ccP}$ is single-valued rather than
multi-valued) and is, in fact, isomorphic to $\mathtt{C}_{m+1}$.

A very special feature of this example is the fact that the bimodule ${}_AA_A$ has simple socle. 
Consequently, all ideals of $A$ are indecomposable as $A\text{-}A$--bimodules. One observation in addition to
the results from \cite{GrMa} is the following.

\begin{proposition}\label{prop21}
If $A$ is the path algebra of the quiver \eqref{eq2}, then the $2$-categories 
$\cQ$ and $\cP$ are biequivalent.
\end{proposition}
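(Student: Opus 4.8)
The plan is to construct an explicit (pseudo-)$2$-functor $\Psi\colon\cQ\to\cP$ which sends the single object $\clubsuit$ to $\clubsuit$, sends each idealization functor $\mathrm{Su}_I$ to the dual projection functor $\mathrm{Dp}_I$ (extended additively), and sends a $2$-morphism $\eta\colon\mathrm{Su}_I\to\mathrm{Su}_J$ to its restriction to $A\text{-}\mathrm{proj}$, reinterpreted as a morphism $\mathrm{Dp}_I\to\mathrm{Dp}_J$; then I would show that $\Psi$ is a biequivalence. As a preliminary I record that both $\cQ$ and $\cP$ are finitary: the quiver \eqref{eq2} is a connected tree, its path algebra $A$ is hereditary, and $|\mathcal{I}|<\infty$ by Corollary~\ref{cor11}, so Propositions~\ref{prop15} and~\ref{prop16} apply.

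First I would identify the indecomposable $1$-morphisms on both sides. Because ${}_AA_A$ has simple socle (the longest path $1\to\cdots\to m$), every nonzero ideal $I$ is indecomposable as a bimodule and, as in the proof of Lemma~\ref{lem10}, is uniquely determined by its composition factors. Evaluating at the projective generator gives $\mathrm{Su}_I(A)=I=\mathrm{Dp}_I(A)$. For $\cP$ the endomorphisms of $\mathrm{Dp}_I$ are the bimodule endomorphisms of $I$, a local ring since $I$ is indecomposable, so $\mathrm{Dp}_I$ is indecomposable; for $\cQ$, any idempotent endomorphism of $\mathrm{Su}_I$ restricts on projectives to an idempotent of $\mathrm{End}_{A\text{-}A}(I)$, which is trivial, and since restriction to projectives is injective (proof of Proposition~\ref{prop15}) the idempotent itself is trivial, so $\mathrm{Su}_I$ is indecomposable as well. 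Moreover $\mathrm{Su}_I\cong\mathrm{Su}_{I'}$ or $\mathrm{Dp}_I\cong\mathrm{Dp}_{I'}$ forces $I\cong I'$ as bimodules, hence $I=I'$ by multiplicity-freeness. Thus the nonzero indecomposable $1$-morphisms of $\cQ$ and of $\cP$ are indexed by the same set $\mathcal{I}\setminus\{0\}$, with $\mathrm{Su}_A=\mathrm{Dp}_A=\mathbbm{1}_{\clubsuit}$. On compositions the two families agree: $\mathrm{Su}_I\circ\mathrm{Su}_J=\mathrm{Su}_{IJ}$ by \eqref{eq1}, while $\mathrm{Dp}_I\circ\mathrm{Dp}_J\cong\mathrm{Dp}_{IJ}$ by Corollary~\ref{cor7}, so $\Psi$ is compatible with horizontal composition (strictly on the source, up to the compositor isomorphisms on the target); coherence of these isomorphisms and compatibility of $\Psi$ with vertical and horizontal composition of $2$-morphisms are routine verifications using naturality of restriction to projectives.

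It then remains to prove that the restriction-to-projectives map
\[
R\colon \mathrm{Hom}_{\ccQ}(\mathrm{Su}_I,\mathrm{Su}_J)\longrightarrow \mathrm{Hom}_{A\text{-}A}(I,J)=\mathrm{Hom}_{\ccP}(\mathrm{Dp}_I,\mathrm{Dp}_J)
\]
is an isomorphism for all $I,J$, which is exactly the statement that the induced functor $\cQ(\clubsuit,\clubsuit)\to\cP(\clubsuit,\clubsuit)$ is fully faithful (it is essentially surjective by additivity, and $\clubsuit$ is the unique object, so this yields the biequivalence). Injectivity of $R$ is immediate from the proof of Proposition~\ref{prop15}. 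For surjectivity I would use that, by multiplicity-freeness of ${}_AA_A$, the space $\mathrm{Hom}_{A\text{-}A}(I,J)$ is at most one-dimensional and is nonzero precisely when $I\subseteq J$, in which case it is spanned by the inclusion $I\hookrightarrow J$; this is the computation underlying the biequivalence $\cP\simeq\cC^{\mathtt{C}_{m+1}}_{\Bbbk}$ of \cite{GrMa}. When $I\subseteq J$ the canonical inclusion $\mathrm{Su}_I\hookrightarrow\mathrm{Su}_J$ of idealization functors (the transformation $IM\hookrightarrow JM$ noted after \eqref{eq1}) restricts at $M=A$ to exactly the inclusion $I\hookrightarrow J$, so $R$ is surjective; when $I\not\subseteq J$ both sides vanish. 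Hence $R$ is an isomorphism and $\Psi$ is a biequivalence.

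The hard part is the surjectivity of $R$. Since $\mathrm{Su}_I$ is not right exact, a natural transformation defined only on projectives need not extend to all of $A\text{-}\mathrm{mod}$ — this is precisely the phenomenon flagged in the remark after Proposition~\ref{prop15} — and it is only the very special structure of \eqref{eq2}, namely the simple bimodule socle and the resulting one-dimensional $\mathrm{Hom}$-spaces realized by honest inclusions of ideals, that forces every bimodule map $I\to J$ to lift to the canonical inclusion of idealization functors. This is exactly the feature that fails for non-uniform orientations in type $A$ and for trees outside type $A$, and is the reason the biequivalence of $\cQ$ and $\cP$ is special to the present case.
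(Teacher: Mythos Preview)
Your proof is correct and follows essentially the same approach as the paper: both arguments restrict $1$- and $2$-morphisms to $A\text{-}\mathrm{proj}$, use the simple socle of ${}_AA_A$ to show that $\mathrm{Hom}_{A\text{-}A}(I,J)$ is one-dimensional and spanned by the inclusion when $I\subseteq J$ (and zero otherwise), and then realize this inclusion via the canonical natural transformation $\mathrm{Su}_I\hookrightarrow\mathrm{Su}_J$. The only difference is packaging---the paper factors both $\cQ$ and $\cP$ through the common target of additive endofunctors of $\mathcal{C}_{\mathrm{proj}}$ and compares images there, while you build a direct $2$-functor $\cQ\to\cP$---but the substance is identical.
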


\begin{proof}
Note that in this situation $A$ is hereditary and connected. From Corollary~\ref{cor11} it follows
that $|\mathcal{I}|<\infty$. In particular, both $\cQ$ and $\cP$ are well-defined and finitary,
see Propositions~\ref{prop15} and \ref{prop16}.
For both of these $2$-categories consider the restriction $2$-functor to the $2$-category 
of additive endofunctors on $\mathcal{C}_{\mathrm{proj}}$, where the latter stands for the category
of projective objects in $\mathcal{C}$. This is well defined as the action of $\mathrm{Su}_I$ preserves 
$\mathcal{C}_{\mathrm{proj}}$ for each $I$ as $A$ is hereditary. The restriction $2$-functor is clearly
faithful both on the level of $1$-morphisms and on the level of $2$-morphisms.

Now, for any  non-zero $I$ and $J$, the space $\mathrm{Hom}_{A\text{-}A}(I,J)$ is zero if $I\not\subset J$ and is
one-dimensional otherwise since both $I$ and $J$ have simple socle 
(as $A$--$A$-bimodules) and the corresponding simple bimodule appears with multiplicity one in both
of them. This means that $\mathrm{Hom}_{\ccP}(\mathrm{Dp}_I,\mathrm{Dp}_J)$ is zero if $I\not\subset J$ and is
one-dimensional otherwise. 

As the restrictions of $\mathrm{Su}_I$ and $\mathrm{Dp}_I$ to $\mathcal{C}_{\mathrm{proj}}$ are isomorphic
(by construction), from the previous paragraph and the proof of Proposition~\ref{prop15} it follows that 
$\mathrm{Hom}_{\ccQ}(\mathrm{Su}_I,\mathrm{Su}_J)$ is zero if $I\not\subset J$ and is at most
one-dimensional otherwise. However, the inclusion $I\subset J$ does give rise to a non-zero
natural transformation in $\mathrm{Hom}_{\ccQ}(\mathrm{Su}_I,\mathrm{Su}_J)$ in the obvious way. Therefore
$\mathrm{Hom}_{\ccQ}(\mathrm{Su}_I,\mathrm{Su}_J)$ is one-dimensional if $I\subset J$.
This implies that both restriction $2$-functors are full and faithful. As already noted above, 
by construction of dual projection functors, the values of both these restrictions hit
exactly the same isomorphism classes of endofunctors of $\mathcal{C}_{\mathrm{proj}}$.
The claim follows.
\end{proof}

\subsection{Setup and some combinatorics}\label{s7.2}

For a vertex $i$ of an oriented graph $\Gamma$ we denote by $\deg_{\Gamma}(i)$ the degree of $i$, by
$\deg_{\Gamma}^{\mathrm{in}}(i)$ the in-degree of $i$ and by $\deg_{\Gamma}^{\mathrm{out}}(i)$ 
the out-degree of $i$. Clearly, $\deg_{\Gamma}(i)=\deg_{\Gamma}^{\mathrm{in}}(i)+\deg_{\Gamma}^{\mathrm{out}}(i)$.

In the rest of the paper we consider an oriented (connected) tree $Q$ with vertex set
$Q_0=\{1,2,\dots,n\}$, where $n>1$. Set 
\begin{displaymath}
\mathbf{K}(Q)=\{i\in Q_0\,;\, \deg_{Q}^{\mathrm{in}}(i)\deg_{Q}^{\mathrm{out}}(i)=0\},\qquad 
\mathbf{K}'(Q)=\{i\in \mathbf{K}(Q)\,;\,\deg_{Q}(i)\geq 2\}.
\end{displaymath}
In other words,  $\mathbf{K}(Q)$ is the set of all sinks and sources in $Q$ and 
$\mathbf{K}'(Q)$ is the set of all elements $i\in\mathbf{K}(Q)$ which are not leaves.
In what follows identify subsets in $Q_0$ with the corresponding full
subgraphs in $Q$. A function $\alpha:Q_0\to X$, for any $X$, will be written
$\alpha=(\alpha(1),\alpha(2),\dots,\alpha(n))$.

Following \cite{Gr}, we say that $Q$ is {\em admissible} provided that all vertices of $Q$ of degree
at least $3$ belong to $\mathbf{K}(Q)$. 

\begin{example}\label{nnewex3}
{\rm  
The orientation of a $D_4$ diagram on the left hand side
of the following picture is admissible while the one on the right hand side is not.
\begin{equation}\label{eq101}
\xymatrix{
&4\ar[d]&\\
1\ar[r]&2&3\ar[l]
}\qquad\qquad\qquad
\xymatrix{
&4\ar[d]&\\
1\ar[r]&2\ar[r]&3
}
\end{equation}
}
\end{example}

For $i\in Q_0$, we denote by $\overline{i}$ 
the set of all elements in $Q_0$ to which there is
an oriented path (possibly empty) from $i$ in the quiver $Q$.
Elements in $\overline{i}$ will be called {\em successors} of $i$.
We have $j\in \overline{i}$ if and only if the pair $(j,i)$ belongs
to the transitive closure of the binary relation given by elements
in $Q_1$ (our convention is that the arrow from $s$ to $t$ corresponds 
to the pair $(t,s)$). For $X\subset Q_0$, we define
\begin{displaymath}
\overline{X}:=\bigcup_{x\in X} \overline{x}.
\end{displaymath}
Note that $\overline{\varnothing}=\varnothing$.

\begin{example}\label{nnewex4}
{\rm
For the left quiver in \eqref{eq101} we have $\overline{1}=\{1,2\}$ while for the right 
quiver in \eqref{eq101} we have $\overline{1}=\{1,2,3\}$ and $\overline{\{1,4\}}=Q_0$.
}
\end{example}

A function $\alpha:Q_0\to Q_0\cup\{0\}$ is called a {\em path function} provided that 
$\alpha(i)\in\overline{i}\cup\{0\}$ for all $i$. A path function is called {\em monotone} 
provided that, for all $i,j$ such that $i\in\overline{j}$ and $\alpha(i)\neq 0$, we have 
$\alpha(j)\neq0$ and $\alpha(i)\in\overline{\alpha(j)}$. In particular, a monotone 
function maps to zero all successors of any preimage of zero.

\begin{example}\label{nnewex5}
{\rm
The identity function on $Q_0$ is a monotone path functions.
The function which maps all elements of $Q_0$ to $0$ is a monotone path function.
At the same time, for the left quiver in \eqref{eq101}, the function which maps $1$ to $0$
and $i$ to $i$ for all $i=2,3,4$ is a path function but it is not monotone.
}
\end{example}

A {\em chain} in $Q$ is a subtree isomorphic to \eqref{eq2} for some $m$. The set of 
all  chains in $Q$ is partially ordered by inclusions. A {\em maximal chain} is a chain which 
is maximal with respect to this partial order. We denote by $\mathfrak{M}_Q$ the set of all maximal
chains in $Q$. 

\begin{example}\label{nnewex5-5}
{\rm
For the left quiver in \eqref{eq101}, we have
$\mathfrak{M}_Q=\{\{1,2\},\{2,3\},\{2,4\}\}$.
}
\end{example}

For a function $\alpha:Q_0\to Q_0\cup\{0\}$, define the {\em support} $\mathrm{supp}(\alpha)$
of $\alpha$  as the (not necessarily full) subgraph $Q^{(\alpha)}$ of $Q$ given by the
union of all $X\in \mathfrak{M}_Q$ which contain some $i\in Q_0$ such that $\alpha(i)\in X$.
If $\alpha$ is a path function, then $X\in \mathfrak{M}_Q$ belongs to  
$\mathrm{supp}(\alpha)$ if and only if the image of $\alpha$ intersects $X$.

\begin{example}\label{nnewex5-6}
{\rm
Consider the quiver $\xymatrix{1\ar[r]&2&3\ar[l]\ar[r]&4}$. For the monotone path
function $\alpha=(1,2,2,0)$, the graph $\mathrm{supp}(\alpha)$ is
\begin{displaymath}
\xymatrix{1\ar[r]&2&3\ar[l]},
\end{displaymath}
in particular, it is connected. For the monotone path function $\beta=(1,0,4,4)$,
the graph $\mathrm{supp}(\beta)$ is
\begin{displaymath}
\xymatrix{1\ar[r]&2&3\ar[r]&4},
\end{displaymath}
in particular, it is disconnected.  
}
\end{example}

A monotone path  function $\alpha:Q_0\to Q_0\cup\{0\}$ will be called a {\em special function} provided 
that its support is connected and, additionally, the equality  $\alpha(i)=0$ for 
$i\in \mathbf{K}(Q)\cap \mathrm{supp}(\alpha)$ implies that $\deg_{\mathrm{supp}(\alpha)}(i)=1$.

\begin{lemma}\label{supportlemma}
Let $Q$ be admissible and $\alpha$ a special function. Then
\begin{enumerate}[$($i$)$]
\item\label{usl2} for any $i\in \mathrm{supp}(\alpha)$ we have 
$\deg_{\mathrm{supp}(\alpha)}(i)\in\{\deg_{Q}(i),1\}$;
\item\label{usl3} $\alpha(i)\neq i$ for any $i\in \mathbf{K}(Q)\cap \mathrm{supp}(\alpha)$ 
with $\deg_{\mathrm{supp}(\alpha)}(i)=1$;
\item\label{usl4} $\alpha(i)=i$ for any $i\in \mathbf{K}(Q)\cap \mathrm{supp}(\alpha)$ 
with $\deg_{\mathrm{supp}(\alpha)}(i)=\deg_{Q}(i)$. 
\end{enumerate}
\end{lemma}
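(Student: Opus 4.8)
My plan is to first translate the statement into the combinatorics of the chain decomposition of $Q$ and then run a three‑way case analysis on the value $\alpha(i)$. Write $S=\mathrm{supp}(\alpha)$. The starting observation is structural: since $Q$ is admissible, every vertex of degree $\geq 3$ lies in $\mathbf{K}(Q)$, and two distinct maximal chains can meet only at a vertex of $\mathbf{K}(Q)$, where such a vertex $i$ of degree $d$ is the common endpoint of exactly $d$ maximal chains, each contributing one edge at $i$. Consequently, for $i\in\mathbf{K}(Q)\cap S$ the quantity $\deg_S(i)$ equals the number of maximal chains through $i$ that lie in $S$, a chain lying in $S$ precisely when it meets $\mathrm{Im}(\alpha)$; at a pass‑through vertex the unique chain contributes both edges, and at a leaf the unique chain contributes one. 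This immediately settles claim \eqref{usl2} for pass‑through vertices and leaves, so the whole lemma reduces to the case $i\in\mathbf{K}'(Q)\cap S$, i.e.\ $d=\deg_Q(i)\geq 2$.

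The easy half comes next. If $\alpha(i)=i$ then $i\in\mathrm{Im}(\alpha)$, and since $i$ is the common endpoint of \emph{all} $d$ chains through it, every such chain meets $\mathrm{Im}(\alpha)$ at $i$; hence all lie in $S$ and $\deg_S(i)=\deg_Q(i)$. This proves one implication of \eqref{usl4}. For the remaining analysis I would distinguish the three mutually exclusive possibilities $\alpha(i)=i$, $\alpha(i)=0$, and $\alpha(i)=u$ for some proper successor $u\in\overline{i}\setminus\{i\}$ (the last only possible when $i$ is a source, since a sink $i$ has $\overline{i}=\{i\}$). In the case $\alpha(i)=0$ the defining condition of a special function forces $\deg_S(i)=1$ directly. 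The aim is to show that the third case also yields $\deg_S(i)=1$; granting this, the trichotomy gives $\deg_S(i)\in\{1,\deg_Q(i)\}$, proving \eqref{usl2}, and since for $d\geq 2$ the value $\deg_Q(i)$ occurs only in the case $\alpha(i)=i$ while $\deg_S(i)=1$ occurs only in the other two cases, both \eqref{usl3} and \eqref{usl4} follow at once.

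The main obstacle is exactly this third case: a source $i$ with $\alpha(i)=u\notin\{0,i\}$. Let $e$ be the first edge of the directed path $i\rightsquigarrow u$ and let $T$ be the subtree of $Q$ lying beyond $e$, so $u\in T$ and, for every successor $v\in\overline{i}$ with $\alpha(v)\neq 0$, monotonicity gives $\alpha(v)\in\overline{\alpha(i)}=\overline{u}\subseteq T$. I would then show that no maximal chain $C_{e'}$ through $i$ with $e'\neq e$ can lie in $S$. Suppose one does, witnessed by $\alpha(v)=z\in C_{e'}\setminus\{i\}\subseteq\overline{i}$ (note $z\neq i$ as $i$ is a source). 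If $v\in\overline{i}$ then $z\in\overline{u}\subseteq T$, contradicting that $z$ lies in the branch $e'$, which is disjoint from $T$ except at $i$. If $v\notin\overline{i}$, then the two distinct directed paths $i\rightsquigarrow z$ and $v\rightsquigarrow z$ force a confluence; admissibility rules out a meeting vertex of in‑degree $\geq 2$ and positive out‑degree (it would have degree $\geq 3$ yet be neither sink nor source), so the paths can meet only at $z$, which must therefore be a \emph{sink} of in‑degree $\geq 2$. One incoming edge of $z$ belongs to $C_{e'}\subseteq S$; the other belongs to a maximal chain terminating at $z$, which contains $z=\alpha(v)\in\mathrm{Im}(\alpha)$ and hence also lies in $S$. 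Thus $\deg_S(z)\geq 2$. Now $z\in\mathbf{K}(Q)\cap S$ with $\overline{z}=\{z\}$, so $\alpha(z)\in\{z,0\}$: if $\alpha(z)=0$ this contradicts the special condition (which demands $\deg_S(z)=1$), and if $\alpha(z)=z$ then, since $z\in\overline{i}$, monotonicity gives $z\in\overline{u}\subseteq T$, again a contradiction. Hence $C_{e'}\notin S$, so $\deg_S(i)=1$, as required.

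The only genuinely delicate point beyond this is that for leaves one has $\deg_Q(i)=1=\deg_S(i)$, so the hypotheses of \eqref{usl3} and \eqref{usl4} degenerate to the same case; here the statements are read through the bifurcation of \eqref{usl2}, whose two branches coincide, and the substantive content is exactly the branch‑point case $d\geq 2$ treated above. I would close by assembling the trichotomy into the three assertions as indicated, the confluence argument of the previous paragraph being the heart of the proof since it is precisely where admissibility and the special condition are used in tandem to tame the behaviour of $\alpha$ away from the successors of $i$.
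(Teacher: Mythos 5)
Your proof is correct, and its overall skeleton coincides with the paper's: the reduction to vertices of $\mathbf{K}(Q)$ via the observation that a non-crossing vertex has degree $2$ and lies on a unique maximal chain, followed by the trichotomy $\alpha(i)=i$ (full degree), $\alpha(i)=0$ (degree one by the special condition), and $\alpha(i)$ a proper successor (the hard case, occurring only for sources). The two arguments part ways in the hard case. The paper shows that $\alpha$ vanishes on $Y\setminus\{i\}$ for every other maximal chain $Y$ through $i$ (your Case A is exactly this monotonicity argument) and then invokes connectedness of the support to conclude $Y\cap\mathrm{supp}(\alpha)=\{i\}$. You instead argue by contradiction on a witness $v$ with $\alpha(v)\in C_{e'}$ and must additionally dispose of a witness with $v\notin\overline{i}$ (your Case B), which you do by a confluence argument: admissibility forces the two oriented paths to meet only at a sink $z$ of in-degree at least $2$, after which either the degree-one clause of specialness at $z$ or monotonicity kills the configuration. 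This costs you an extra case but buys two things: you never use connectedness of the support, and your argument is insensitive to which of the two (not obviously equivalent) descriptions of $\mathrm{supp}(\alpha)$ from Subsection 7.2 one adopts, whereas the paper's "by construction" step implicitly uses the reading in which the witness lies on the chain. The one remaining blemish, which you flag yourself and which is a defect of the statement rather than of either proof, is that at a leaf the hypotheses of (ii) and (iii) coincide while their conclusions conflict; like the paper, you establish the intended dichotomy only where $\deg_Q(i)\geq 2$, which is all that is used later.
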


\begin{proof}
Let $i\in \mathrm{supp}(\alpha)$. First consider the case $i\in Q_0\setminus\mathbf{K}(Q)$. 
In this case $\mathrm{deg}_Q(i)=2$ since $Q$ is admissible.
Therefore there is a unique $X\in \mathfrak{M}_Q$ containing $i$, moreover, $i$ is not a leaf in $X$.
From the definitions we thus have $X\subset \mathrm{supp}(\alpha)$ and hence 
$\deg_{\mathrm{supp}(\alpha)}(i)=\mathrm{deg}_Q(i)=2$.

Assume now that $i\in\mathbf{K}(Q)$ is a sink. If $\alpha(i)\neq 0$, then 
$\alpha(i)=i$ since $\alpha$ is a path function. From $\alpha(i)=i$ and the definitions 
it follows that each maximal chain $X\in \mathfrak{M}_Q$ containing  $i$ belongs to 
$\mathrm{supp}(\alpha)$.  Therefore $\deg_{\mathrm{supp}(\alpha)}(i)=\deg_{Q}(i)$.
If $\alpha(i)=0$, then $\deg_{\mathrm{supp}(\alpha)}(i)=1$ as $\alpha$ is special.
 
Assume, finally, that $i\in\mathbf{K}(Q)$ is a source. If $\alpha(i)=i$, 
then any $X\in \mathfrak{M}_Q$ containing $i$ belongs to $\mathrm{supp}(\alpha)$ by construction
and hence $\deg_{\mathrm{supp}(\alpha)}(i)=\deg_{Q}(i)$. If $\alpha(i)=0$, then $\alpha(j)=0$
for all $j\in\overline{i}$ since $\alpha$ is monotone. Therefore none of the maximal chains starting
at $i$ belongs to $\mathrm{supp}(\alpha)$ and thus $i\not\in \mathrm{supp}(\alpha)$, which contradicts
our assumptions.

It is left to consider the case $\alpha(i)\not\in \{0,i\}$. Let $\alpha(i)=j$ and $X\in \mathfrak{M}_Q$
be the unique maximal chain containing $i$ and $j$. Let $Y\in \mathfrak{M}_Q$ be any other maximal
chain starting in $i$ and $s\in Y\setminus\{i\}$. Assume that $\alpha(s)\neq 0$. 
Then $\alpha(s)\in\overline{s}$ since
$\alpha$ is a path function. Further, $\overline{s}\cap X=\varnothing$ since $Q$ is a tree.
In particular, we have  $\overline{s}\cap \overline{\alpha(i)}=\varnothing$, which contradicts the assumption 
that $\alpha$ is monotone. Therefore $\alpha(s)=0$ and hence all non-leaves in $Y$ and all
edges in $Y$ are not in $\mathrm{supp}(\alpha)$ by construction.
Since $\mathrm{supp}(\alpha)$ is connected by assumptions and $Q$ is a tree,
it follows that $Y\cap \mathrm{supp}(\alpha)=i$.
Therefore $\deg_{\mathrm{supp}(\alpha)}(i)=1$ in this case.
The claim of the lemma follows.
\end{proof}


\begin{example}\label{nnewex6}
{\rm
If $Q$ is the quiver given by the left hand side of \eqref{eq101}, then possible supports
for special functions for $Q$ are: $Q_0$, $\{1,2\}$, $\{2,3\}$, $\{2,4\}$ and $\varnothing$.
The following is a complete list of special functions for $Q$ with support $Q_0$:
\begin{gather*}
(1,2,3,4),\,\,(2,2,3,4),\,\,(1,2,2,4),\,\,(1,2,3,2),\,\,(2,2,2,4),\\(1,2,2,2),\,\,(2,2,3,2),\,\,
(2,2,2,2).
\end{gather*}
For the same $Q$, here is a complete  list of special functions for $Q$ with support $\{1,2\}$:
\begin{displaymath}
(1,0,0,0),\,\,(2,0,0,0). 
\end{displaymath}
Finally, $(0,0,0,0)$ is the only special function for $Q$ with support $\varnothing$.
The total number of special functions in this case is $15$.
}
\end{example}

\begin{example}\label{nnewex6-1}
{\rm
If $Q$ is the quiver 
$\xymatrix{1\ar[r]&2&3\ar[l]}$, then possible supports
for special functions for $Q$ are: $Q_0$, $\{1,2\}$, $\{2,3\}$ and $\varnothing$.
The following is a complete list of special functions for $Q$ with support $Q_0$:
\begin{displaymath}
(1,2,3),\,\,(1,2,2),\,\,(2,2,3),\,\,(2,2,2).
\end{displaymath}
For the same $Q$, here is a complete  list of special functions for $Q$ with support $\{1,2\}$:
\begin{displaymath}
(1,0,0),\,\,(2,0,0). 
\end{displaymath}
Finally, $(0,0,0)$ is the only special function for $Q$ with support $\varnothing$.
The total number of special functions in this case is $9$.
}
\end{example}

\begin{example}\label{nnewex6-2}
{\rm
If $Q$ is the quiver 
$\xymatrix{1&2\ar[r]\ar[l]&3}$, then possible supports
for special functions for $Q$ are: $Q_0$, $\{1,2\}$, $\{2,3\}$ and $\varnothing$.
The following is a complete list of special functions for $Q$ with support $Q_0$:
\begin{displaymath}
(1,2,3),\,\,(1,2,0),\,\,(0,2,3),\,\,(0,2,0).
\end{displaymath}
For the same $Q$, here is a complete  list of special functions for $Q$ with support $\{1,2\}$:
\begin{displaymath}
(1,1,0),\,\,(0,1,0). 
\end{displaymath}
Finally, $(0,0,0)$ is the only special function for $Q$ with support $\varnothing$.
The total number of special functions in this case is $9$.
}
\end{example}

We denote by $\mathbf{C}$ the set of all special functions for $Q$.
A subtree $\Gamma$ of $Q$ (possibly empty) 
is called a {\em special subtree} if $\Gamma=\mathrm{supp}(\alpha)$ for some
special function $\alpha$. We denote by $\mathbf{W}$ the set of all special subtrees of $Q$.
We write
\begin{displaymath}
\mathbf{C}=\bigcup_{\Gamma\in \mathbf{W}} \mathbf{C}(\Gamma)
\end{displaymath}
where $\mathbf{C}(\Gamma)$ stands for the set of all special functions with support $\Gamma$.

\subsection{Type $A$ enumeration}\label{s7.25}

Here we enumerate special functions for type $A$ quivers.
In this subsection we let $Q$ be the oriented quiver obtained by choosing some orientation of 
the following Dynkin diagram of type $A_n$:
\begin{equation}\label{eq67}
\xymatrix{
1\ar@{-}[rr]&&2\ar@{-}[rr]&&\dots\ar@{-}[rr]&&n
}
\end{equation}
As mentioned above, we assume $n>1$. 
We write $\mathbf{K}(Q)=\{l_1,l_2,\dots,l_k\}$, where $1=l_1<l_2<\dots<l_k=n$.

\begin{lemma}\label{lem102}
Let $\alpha$ be a non-zero special function for $Q$. Then the set $\mathrm{supp}(\alpha)$ has the form
$\{l_i,l_i+1,\dots,l_j\}$ for some $i,j\in \{1,2,\dots,k\}$ with $i<j$.
\end{lemma}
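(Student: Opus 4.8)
The plan is to show that the support of a non-zero special function $\alpha$, being a connected subtree of a type $A$ quiver (hence itself a sub-chain), must have both its endpoints in $\mathbf{K}(Q)$; then since $\mathbf{K}(Q)=\{l_1,\dots,l_k\}$ is exactly the set of sinks and sources, the support will necessarily be of the stated form $\{l_i,l_i+1,\dots,l_j\}$. I would begin by recording that for a type $A_n$ quiver, every connected subgraph is an interval $\{a,a+1,\dots,b\}$, so by Example-style reasoning $\mathrm{supp}(\alpha)=\{a,a+1,\dots,b\}$ for some $a\leq b$. Since $\alpha$ is non-zero its support is non-empty, and I must rule out $a=b$ (a single vertex): a single vertex cannot be the support of a special function because the support is by definition a union of maximal chains, each of which has at least one edge, so $a<b$.

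**Next I would** pin down the two endpoints $a$ and $b$ of the interval. The key observation is that the endpoints of the support-interval are leaves of $\mathrm{supp}(\alpha)$, i.e.\ $\deg_{\mathrm{supp}(\alpha)}(a)=\deg_{\mathrm{supp}(\alpha)}(b)=1$. I claim each endpoint lies in $\mathbf{K}(Q)$. Suppose, for contradiction, that $a\notin\mathbf{K}(Q)$; then $a$ is neither a sink nor a source, so in the ambient quiver $a$ has one incoming and one outgoing edge, giving $\deg_Q(a)=2$. By Lemma~\ref{supportlemma}\eqref{usl2}, $\deg_{\mathrm{supp}(\alpha)}(a)\in\{\deg_Q(a),1\}=\{2,1\}$, which does not immediately contradict $\deg_{\mathrm{supp}(\alpha)}(a)=1$; so I need the finer argument from the proof of Lemma~\ref{supportlemma}. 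The point is that if $a\notin\mathbf{K}(Q)$ lies in the support, then $a$ is an interior (non-leaf) vertex of the unique maximal chain $X$ containing it, and that proof shows $X\subset\mathrm{supp}(\alpha)$, forcing $\deg_{\mathrm{supp}(\alpha)}(a)=2$ and contradicting that $a$ is an endpoint. Hence $a\in\mathbf{K}(Q)$, and symmetrically $b\in\mathbf{K}(Q)$.

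**The main obstacle** I expect is precisely this endpoint analysis: disentangling when a vertex of degree $2$ in $Q$ can still be a leaf of the support. The resolution is the dichotomy in Lemma~\ref{supportlemma}\eqref{usl2}, which says every support-vertex has support-degree either $\deg_Q$ or $1$; combined with the fact that non-$\mathbf{K}(Q)$ vertices have $\deg_Q=2$ and (via the internal mechanism of Lemma~\ref{supportlemma}'s proof) always sit inside a full maximal chain of the support, so their support-degree is $2$, not $1$. Therefore a leaf of the support cannot be a degree-$2$ interior vertex, and must be a sink or source, i.e.\ lie in $\mathbf{K}(Q)$.

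**Finally**, writing $a=l_i$ and $b=l_j$ with $l_i,l_j\in\mathbf{K}(Q)$ and $a<b$ forces $i<j$, so $\mathrm{supp}(\alpha)=\{l_i,l_i+1,\dots,l_j\}$ as claimed. This completes the argument; the only genuinely delicate point is identifying the endpoints with elements of $\mathbf{K}(Q)$, and everything else is bookkeeping about intervals in a type $A$ chain.
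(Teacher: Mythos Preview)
Your proposal is correct and follows essentially the same approach as the paper: the support is connected (hence an interval), non-trivial (being a union of maximal chains, each with at least two vertices), and its two leaves lie in $\mathbf{K}(Q)$. The paper's proof is a one-liner asserting exactly these three facts; the only difference is that where you invoke Lemma~\ref{supportlemma} and its proof to show the leaves are sinks or sources, the paper simply observes this directly from the definition of the support as a union of maximal chains (any leaf of such a union must be an endpoint of one of the chains, and endpoints of maximal chains are in $\mathbf{K}(Q)$).
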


\begin{proof}
By  definition, $\mathrm{supp}(\alpha)$ is connected, has more than one vertex and 
both leaves of $\mathrm{supp}(\alpha)$ belong to $\mathbf{K}(Q)$. The claim follows.
\end{proof}

After Lemma~\ref{lem102}, for $i,j\in \{1,2,\dots,k\}$ with $i<j$ we denote by 
$\mathbf{C}(i,j)$ the set of all special functions with support
$\{l_i,l_i+1,\dots,l_j\}$. For $m=0,1,2,\dots$, we denote by $\mathrm{cat}(m)$ 
the $m$-th Catalan number $\frac{1}{m+1}\binom{2m}{m}$ and set
$\underline{\mathrm{cat}}(m):=\mathrm{cat}(m)-1$. 
For $m=1,2,3,\dots$, we set 
\begin{displaymath}
\mathrm{cat}_1(m):=\mathrm{cat}(m)-\mathrm{cat}(m-1)\quad\text{ and }\quad
\underline{\mathrm{cat}}_1(m):=\mathrm{cat}_1(m)-1. 
\end{displaymath}
For $m=2,3,4,\dots$, we set 
\begin{displaymath}
\mathrm{cat}_2(m):=\mathrm{cat}(m)-2\mathrm{cat}(m-1)+\mathrm{cat}(m-2)\quad\text{ and }\quad
\underline{\mathrm{cat}}_2(m):=\mathrm{cat}_2(m)-1.
\end{displaymath}

\begin{proposition}\label{lem25}
{\tiny\hspace{2mm}}

\begin{enumerate}[$($i$)$]
\item\label{lem25.1} If $k=2$, then $|\mathbf{C}(1,2)|=\underline{\mathrm{cat}}(n+1)$.
\item\label{lem25.2} If $k>3$ and  $i\in\{2,3,\dots,k-2\}$, then 
$|\mathbf{C}(i,i+1)|=\underline{\mathrm{cat}}_2(l_{i+1}-l_i+2)$.
\item\label{lem25.3} If $k>4$ and  $i,j\in\{2,3,\dots,k-1\}$ with $j>i+1$, then 
\begin{displaymath}
|\mathbf{C}(i,j)|=\mathrm{cat}_1(l_{i+1}-l_i+1)\mathrm{cat}_1(l_{j}-l_{j-1}+1)
\prod_{s=i+1}^{j-2}\mathrm{cat}(l_{s+1}-l_s).
\end{displaymath}
\item\label{lem25.4} If $k>2$, then 
\begin{displaymath}
|\mathbf{C}(1,k)|=\mathrm{cat}(l_{2})\mathrm{cat}(l_k-l_{k-1}+1)
\prod_{s=2}^{k-2}\mathrm{cat}(l_{s+1}-l_s).
\end{displaymath}
\item\label{lem25.5} If $k>3$ and  $j\in\{3,4,\dots,k-1\}$, then 
\begin{displaymath}
|\mathbf{C}(1,j)|=\mathrm{cat}(l_2)\mathrm{cat}_1(l_{j}-l_{j-1}+1)
\prod_{s=2}^{j-2}\mathrm{cat}(l_{s+1}-l_s).
\end{displaymath}
\item\label{lem25.6} If $k>3$ and  $i\in\{2,3,\dots,k-2\}$, then 
\begin{displaymath}
|\mathbf{C}(i,k)|=\mathrm{cat}(l_k-l_{k-1}+1)\mathrm{cat}_1(l_{i+1}-l_{i}+1)
\prod_{s=i+1}^{k-2}\mathrm{cat}(l_{s+1}-l_s).
\end{displaymath}
\item\label{lem25.7} If $k>2$, then 
\begin{displaymath}
|\mathbf{C}(1,2)|=\underline{\mathrm{cat}}_1(l_2-l_1+2)\quad\text{ and }\quad
|\mathbf{C}(k-1,k)|=\underline{\mathrm{cat}}_1(l_k-l_{k-1}+2).
\end{displaymath}
\end{enumerate}
\end{proposition}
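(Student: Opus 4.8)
The plan is to compute $|\mathbf{C}(i,j)|$ as a product of local contributions, one for each maximal chain inside the support. By Lemma~\ref{lem102} the support of a non-zero special function is an interval $\{l_i,\dots,l_j\}$, which is the union of the consecutive maximal chains $X_s:=\{l_s,l_s+1,\dots,l_{s+1}\}$ for $s=i,\dots,j-1$. First I would observe, using Lemma~\ref{supportlemma}\eqref{usl2} and \eqref{usl4}, that each \emph{interior} junction $l_{i+1},\dots,l_{j-1}$ has $\deg_{\mathrm{supp}(\alpha)}=\deg_Q=2$ and is therefore fixed, $\alpha(l_s)=l_s$. Since the value at a sink junction is the common top of the two adjacent chains and a source junction produces no successor relations crossing it, a short check shows that monotonicity across a junction is automatic; hence giving a special function with support $\{l_i,\dots,l_j\}$ amounts to giving, independently, a monotone path function on each $X_s$ that agrees with the forced values at the shared junctions, every vertex outside the interval being forced to $0$. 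Consequently $|\mathbf{C}(i,j)|$ is the product over $s$ of a local count on $X_s$.

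Next I would set up the per-chain count. On a uniformly oriented chain with $v$ vertices, monotone path functions are in bijection with order-preserving and extensive self-maps of the $(v+1)$-element chain obtained by adjoining a top element $\top$ (the ``ultimate successor''), the value $0$ being recorded as $\top$; the number of such maps is the Catalan number $\mathrm{cat}(v+1)$. The three kinds of ends translate into constraints on this model: a junction end with $\alpha(l_s)=l_s$ forbids the value $\top$ when $l_s$ is the sink of $X_s$ and fixes the bottom vertex when $l_s$ is the source; a global leaf end ($l_1=1$ or $l_k=n$) imposes nothing; and an outer sink/source end ($l_i$ with $i>1$ or $l_j$ with $j<k$) must satisfy $\alpha\neq\mathrm{id}$ there by Lemma~\ref{supportlemma}\eqref{usl3}, i.e.\ fixing that end is forbidden. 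Using the elementary facts that fixing the bottom of an $m$-chain leaves $\mathrm{cat}(m-1)$ maps while forbidding it leaves $\mathrm{cat}(m)-\mathrm{cat}(m-1)=\mathrm{cat}_1(m)$, I obtain local factors $\mathrm{cat}(v-1)$ for a chain with two junction ends, $\mathrm{cat}(v)$ for one junction and one leaf, and $\mathrm{cat}_1(v)$ for one junction and one outer sink/source.

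Multiplying these factors over $X_i,\dots,X_{j-1}$ and reading off the end types from the position of each chain — $l_i$ is a leaf exactly when $i=1$ and $l_j$ a leaf exactly when $j=k$, all intermediate $l_s$ being junctions — reproduces the products in \eqref{lem25.3}, \eqref{lem25.4}, \eqref{lem25.5} and \eqref{lem25.6}. Here no correction term is needed: each junction $l_s$ is sent to itself and lies in both $X_{s-1}$ and $X_s$, so all of $X_i,\dots,X_{j-1}$ automatically belong to $\mathrm{supp}(\alpha)$, while the condition $\alpha\neq\mathrm{id}$ at an outer end keeps its value out of the neighbouring excluded chain, so $X_{i-1}$ and $X_{j}$ stay out of the support. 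Thus the support is exactly $\{l_i,\dots,l_j\}$ and the product is the whole answer.

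For $j=i+1$ (cases \eqref{lem25.1}, \eqref{lem25.2} and \eqref{lem25.7}) the support is the single chain $X_i$ with no junction, so the adjoined top is not absorbed and the argument stays at $v+1$; now $0$, $1$ or $2$ of the endpoints are outer sinks/sources, and inclusion--exclusion over fixing these ends turns $\mathrm{cat}(v+1)$ into $\mathrm{cat}(v+1)$, $\mathrm{cat}_1(v+1)$ or $\mathrm{cat}_2(v+1)=\mathrm{cat}(v+1)-2\mathrm{cat}(v)+\mathrm{cat}(v-1)$ respectively. Finally the all-zero function is counted by the bijection but has empty support, so subtracting this single function yields the underlined numbers and completes all three cases. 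I expect the genuine difficulty to be the support bookkeeping under the correct reading of $\mathrm{supp}$ (a chain $X$ lies in the support precisely when some vertex of $X$ is sent by $\alpha$ into $X$): one must check that an outer source end cannot leak its value into an excluded neighbour, that every vertex outside the interval is indeed forced to $0$, and that no intermediate chain is inadvertently dropped, together with the gluing and monotonicity verification across junctions in the reduction step. By contrast, once the bijection with extensive maps is in place, the Catalan bookkeeping itself is routine.
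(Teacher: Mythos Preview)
Your proposal is correct and follows essentially the same strategy as the paper: fix the interior junctions via Lemma~\ref{supportlemma}, decouple the count into independent contributions from each maximal chain, and identify each local contribution with a Catalan-type count of order-preserving extensive maps subject to the appropriate endpoint constraints. The paper packages the three local counts as separate lemmata (Lemmata~\ref{nlem51}--\ref{nlem53}) and treats the seven cases one at a time rather than via your unified ``adjoin a top element'' bijection, but the substance is the same; your explicit discussion of the gluing across junctions and of why the support comes out exactly right is a point the paper leaves largely implicit.
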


To prove this we will need the combinatorial lemmata below.
For $q\in\{1,2,\dots\}$, we set $\underline{q}:=\{1,2,\dots,q\}$.

\begin{lemma}\label{nlem51}
For $q\in\{1,2,\dots\}$, let $X_q$ denote the set of all transformations $f$ of 
$\underline{q}$ satisfying the conditions
\begin{enumerate}[$($i$)$]
\item\label{nlem51.1} $f(i)\leq i$ for all $i\in \underline{q}$ (i.e. $f$ is {\em order-decreasing});
\item\label{nlem51.2} $f(i)\leq f(j)$ for all $i,j\in \underline{q}$ such that $i\leq j$
(i.e. $f$ is {\em order-preserving}).
\end{enumerate}
Then $|X_q|=\mathrm{cat}(q)$.
\end{lemma}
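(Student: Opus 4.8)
The plan is to recast the two defining conditions as a single monotonicity-plus-boundedness constraint and then match such maps bijectively with sub-diagonal lattice paths, whose Catalan count is classical. First I would observe that conditions~\eqref{nlem51.1} and~\eqref{nlem51.2} together say precisely that an element $f\in X_q$ is the same datum as a weakly increasing sequence
\begin{displaymath}
1\leq f(1)\leq f(2)\leq\dots\leq f(q),\qquad f(i)\leq i\ \text{ for all }\ i\in\underline{q}.
\end{displaymath}
Setting $h_i:=f(i)-1$, this is in turn equivalent to an integer sequence $0\leq h_1\leq h_2\leq\dots\leq h_q$ subject to $h_i\leq i-1$ for every $i$.

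Next I would set up a bijection with monotone lattice paths. To a sequence $(h_i)$ as above I associate the path from $(0,0)$ to $(q,q)$ built from unit east and north steps whose $i$-th east step is preceded by exactly $h_i$ north steps; monotonicity of $(h_i)$ guarantees this prescription is consistent, and reading off the heights of the east steps recovers $(h_i)$, so the assignment is invertible. The key point is that such a path stays weakly below the diagonal $y=x$ if and only if $h_i\leq i-1$ for all $i$: along the path the quantity $y-x$ increases only at north steps, so its running maximum is attained immediately before one of the east steps, where it equals $h_i-(i-1)$; thus the global sub-diagonal condition $y\leq x$ is exactly the family of pointwise bounds $h_i\leq i-1$. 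This identifies $X_q$ with the set of monotone $(0,0)\to(q,q)$ lattice paths lying weakly below the diagonal.

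Finally I would invoke the classical fact that the number of such sub-diagonal (Dyck) paths of semilength $q$ is $\mathrm{cat}(q)=\frac{1}{q+1}\binom{2q}{q}$, proved e.g.\ by the reflection principle, which gives $|X_q|=\mathrm{cat}(q)$. If one prefers a self-contained argument avoiding any external citation, the same lattice-path model makes the first-return decomposition transparent and yields the Catalan recurrence
\begin{displaymath}
\mathrm{cat}(q)=\sum_{s=0}^{q-1}\mathrm{cat}(s)\,\mathrm{cat}(q-1-s),
\end{displaymath}
from which $|X_q|=\mathrm{cat}(q)$ follows by induction on $q$. The only genuinely substantive step is verifying the equivalence between the pointwise bounds $h_i\leq i-1$ and the global sub-diagonal condition on the path; once that is in place, everything else is routine bookkeeping. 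This lattice-path reformulation is also the natural framework in which the refined counts (those involving $\mathrm{cat}_1$ and $\mathrm{cat}_2$) in the subsequent lemmata become differences of endpoint-constrained path counts.
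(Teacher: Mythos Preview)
Your argument is correct. The paper itself does not give a proof of this lemma at all: it simply refers to the literature (Higgins) for this classical identification of the Catalan monoid with Dyck paths. Your bijection $f\mapsto (h_i)\mapsto$ sub-diagonal lattice path is the standard one, and your verification that the pointwise bounds $h_i\le i-1$ are equivalent to the global condition $y\le x$ is fine (the maximum of $y-x$ along a monotone path is indeed realised immediately before some east step, where it equals $h_i-(i-1)$, or at the endpoint where it is $0$). So you have supplied a self-contained proof where the paper only gives a citation; there is nothing to compare on the level of strategy.
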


\begin{proof}
See, for example, \cite{Hi}.
\end{proof}

\begin{lemma}\label{nlem52}
For $q\in\{2,3,\dots\}$, let $Y_q$ denote the set of all $f\in X_q$ such that 
$f(q)\neq q$.  Then $|Y_q|=\mathrm{cat}_1(q)$.
\end{lemma}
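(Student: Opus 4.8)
The plan is to compute $|Y_q|$ by complementation inside $X_q$: since $Y_q = \{f \in X_q : f(q) \neq q\}$, we have
\begin{displaymath}
|Y_q| = |X_q| - |\{f \in X_q : f(q) = q\}|.
\end{displaymath}
By Lemma~\ref{nlem51} we know $|X_q| = \mathrm{cat}(q)$, and by the definition of $\mathrm{cat}_1(q)$ as $\mathrm{cat}(q) - \mathrm{cat}(q-1)$, the desired identity $|Y_q| = \mathrm{cat}_1(q)$ is equivalent to the claim that exactly $\mathrm{cat}(q-1)$ of the transformations in $X_q$ fix the top point $q$. So the whole proof reduces to establishing
\begin{displaymath}
|\{f \in X_q : f(q) = q\}| = \mathrm{cat}(q-1).
\end{displaymath}

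First I would exhibit a bijection between $\{f \in X_q : f(q) = q\}$ and $X_{q-1}$ given by restriction. Given $f \in X_q$ with $f(q) = q$, I restrict $f$ to $\underline{q-1}$. The key point to verify is that this restriction is a genuine self-map of $\underline{q-1}$, not just of $\underline{q}$: because $f$ is order-decreasing, for every $i \in \underline{q-1}$ we have $f(i) \leq i \leq q-1$, so the values never reach $q$ and the restriction indeed lands in $\underline{q-1}$. The order-decreasing and order-preserving conditions are then inherited verbatim, so $f|_{\underline{q-1}} \in X_{q-1}$.

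Next I would construct the inverse by extension. Given $g \in X_{q-1}$, define $f$ on $\underline{q}$ by $f(i) = g(i)$ for $i \leq q-1$ and $f(q) = q$. I would check that $f \in X_q$: order-decreasingness is clear since $f(q) = q \leq q$ and $f(i) = g(i) \leq i$ otherwise, while for order-preservation the only new comparisons involve $j = q$, where $f(i) = g(i) \leq q-1 < q = f(q)$ for all $i \leq q-1$. These two constructions are visibly mutually inverse, giving the bijection and hence $|\{f \in X_q : f(q) = q\}| = |X_{q-1}| = \mathrm{cat}(q-1)$ by Lemma~\ref{nlem51} again.

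I do not anticipate a genuine obstacle here, as the argument is elementary once the bijection is set up; the only point requiring a moment of care is confirming that the restriction map stays within $\underline{q-1}$, which is exactly where the order-decreasing hypothesis is used. Combining the two displayed computations then yields $|Y_q| = \mathrm{cat}(q) - \mathrm{cat}(q-1) = \mathrm{cat}_1(q)$, as required.
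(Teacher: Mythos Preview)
Your proof is correct and follows essentially the same approach as the paper: both arguments compute $|Y_q|$ by complementation, establishing a bijection between $\{f\in X_q: f(q)=q\}$ and $X_{q-1}$ via restriction to $\underline{q-1}$. You simply supply more detail than the paper does, explicitly constructing the inverse extension and verifying the order conditions.
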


\begin{proof}
Let $Y'_q$ denote the set of all $f\in X_q$ such that $f(q)=q$. Then restriction to
$\underline{q-1}$ defines a bijection from $Y'_q$ to $X_{q-1}$.
Therefore the claim of our lemma follows from Lemma~\ref{nlem51}.
\end{proof}

\begin{lemma}\label{nlem53}
For $q\in\{3,4,\dots\}$, let $Z_q$ denote the set of all $f\in Y_q$ such that 
$f(2)=1$.  Then $|Z_q|=\mathrm{cat}_2(q)$.
\end{lemma}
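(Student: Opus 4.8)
The plan is to compute $|Z_q|$ by a double application of the restriction idea already used in the proof of Lemma~\ref{nlem52}, combined with a complementary count for the value at the point $2$. First I would record two elementary observations valid for any $f\in X_q$: since $f$ is order-decreasing we have $f(1)=1$, and since $f$ is order-preserving with $f(1)=1$ and $f(2)\leq 2$, the value $f(2)$ lies in $\{1,2\}$. Thus the defining condition $f(2)=1$ of $Z_q$ is, inside $X_q$, the exact complement of the condition $f(2)=2$. I would count the latter by restriction: if $f(2)=2$, then $f(i)\geq f(2)=2$ for all $i\geq 2$, so $f$ restricts to an order-decreasing and order-preserving transformation of $\{2,3,\dots,q\}$, and conversely any such transformation extends uniquely by setting $f(1)=1$. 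As $\{2,3,\dots,q\}$ is order-isomorphic to $\underline{q-1}$, this yields a bijection with $X_{q-1}$, whence there are $\mathrm{cat}(q-1)$ maps in $X_q$ with $f(2)=2$ and hence $\mathrm{cat}(q)-\mathrm{cat}(q-1)$ maps in $X_q$ with $f(2)=1$.

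Next I would incorporate the constraint $f(q)\neq q$ by inclusion--exclusion. Writing $A:=\{f\in X_q\,:\,f(2)=1\}$ and $B:=\{f\in X_q\,:\,f(2)=1,\ f(q)=q\}$, we have $Z_q=A\setminus B$, so $|Z_q|=|A|-|B|$, where $|A|=\mathrm{cat}(q)-\mathrm{cat}(q-1)$ is the quantity just computed. For $|B|$ I would reuse the restriction-to-$\underline{q-1}$ bijection from the proof of Lemma~\ref{nlem52}, which identifies $\{f\in X_q\,:\,f(q)=q\}$ with $X_{q-1}$. The key point is that this bijection leaves all values on $\{1,\dots,q-1\}$ unchanged, and in particular the value at $2$ (which is available since $q\geq 3$ gives $2\leq q-1$); hence it restricts to a bijection between $B$ and $\{g\in X_{q-1}\,:\,g(2)=1\}$. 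Applying the first paragraph with $q-1$ in place of $q$ (legitimate since $q-1\geq 2$) gives $|B|=\mathrm{cat}(q-1)-\mathrm{cat}(q-2)$.

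Combining the two counts yields $|Z_q|=\bigl(\mathrm{cat}(q)-\mathrm{cat}(q-1)\bigr)-\bigl(\mathrm{cat}(q-1)-\mathrm{cat}(q-2)\bigr)=\mathrm{cat}(q)-2\mathrm{cat}(q-1)+\mathrm{cat}(q-2)=\mathrm{cat}_2(q)$, as required. Conceptually the argument simply telescopes the first difference $\mathrm{cat}_1$ into the second difference $\mathrm{cat}_2$, exactly as the notation suggests. I do not anticipate a serious obstacle; the only point deserving care is the compatibility of the two restriction bijections---namely that passing to $\underline{q-1}$ in order to remove the constraint at $q$ genuinely preserves the constraint at $2$---which holds precisely because the hypothesis $q\geq 3$ keeps both indices $1$ and $2$ inside the restricted domain.
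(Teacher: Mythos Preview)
Your argument is correct and uses the same restriction-bijection idea as the paper, just organized slightly differently. The paper works directly inside $Y_q$: it sets $Z'_q=\{f\in Y_q:f(2)=2\}$, observes that restriction to $\{2,\dots,q\}$ (followed by the shift $x\mapsto x-1$) gives a bijection $Z'_q\to Y_{q-1}$, and concludes $|Z_q|=|Y_q|-|Z'_q|=\mathrm{cat}_1(q)-\mathrm{cat}_1(q-1)=\mathrm{cat}_2(q)$ in one step via Lemma~\ref{nlem52}. You instead decompose at the level of $X_q$, handling the constraints at $2$ and at $q$ separately and then combining; this is a bit longer but equally valid, and your care about $q\geq 3$ ensuring that the index $2$ survives the restriction is exactly the point that makes the argument go through.
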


\begin{proof}
Let $Z'_q$ denote the set of all $f\in Y_q$ such that $f(2)=2$. Then restriction to
$\{2,3,\dots,q\}$ followed by the identification of the latter set with
$\underline{q-1}$ given by $x\mapsto x-1$ gives rise to a bijection from $Z'_q$ to $Y_{q-1}$.
Therefore the claim of our lemma follows from Lemma~\ref{nlem52}.
\end{proof}

\begin{proof}[Proof of Proposition~\ref{lem25}.]
To prove claim~\eqref{lem25.1}, let $k=2$ and assume that $Q$ is given by  \eqref{eq2}.
For convenience, we write $n+1$ for $0$. For a function $\alpha:Q_0\to Q_0\cup\{n+1\}$,
let $\alpha'$ denote the extension of $\alpha$ to a transformation of $Q_0\cup\{n+1\}$
via $\alpha'(n+1)=n+1$. Then the fact that $\alpha$ is a path function is equivalent to
the requirement $\alpha'(i)\geq i$ for all $i$ (i.e. $\alpha$ is order increasing). 
Furthermore, the fact that $\alpha$ is monotone is equivalent to the requirement that $i\leq j$ 
implies $\alpha'(i)\leq \alpha'(j)$ for all $i,j$ (i.e. $\alpha$ is order preserving).
Thus the correspondence $\alpha\mapsto \alpha'$ defines a bijection between $\mathbf{C}(1,2)$
and the set of all order increasing and order preserving transformations of $Q_0\cup\{n+1\}$
which are different from the constant transformation with image $n+1$
(the latter is the unique constant order increasing and order preserving transformation). Hence 
$|\mathbf{C}(1,2)|=\mathrm{cat}(n+1)-1$, using Lemma~\ref{nlem51} with reversed order.

To prove claim~\eqref{lem25.2}, assume that $k>3$ and $i\in\{2,3,\dots,k-2\}$. Consider a
special function $\alpha$ supported on $\{l_i,l_i+1,\dots,l_{i+1}\}$,
in particular, $\alpha$ is zero outside this interval. Without loss of generality
we may assume that the arrows in this interval are of the form 
$x\to x-1$ (the other case is similar). Then, using Lemma~\ref{supportlemma}, we have  
$\alpha(l_i)=0$ and $\alpha(l_{i+1})<l_{i+1}$.
Similarly to the previous paragraph, the requirement that $\alpha$ is a path function is
equivalent to the fact that it decreases the order and the requirement that $\alpha$ is monotone
is equivalent to the fact that it preserves the order. Define a bijection from
$\{0,l_i,l_i+1,\dots,l_{i+1}\}$ to $\{1,2,\dots,q\}$, where $q=l_{i+1}-l_i+2$, by mapping 
$0$ to $1$ and $l_i+j$ to $2+j$, for $j=0,1,\dots$. Under this bijection, special
functions  supported on  $\{l_i,l_i+1,\dots,l_{i+1}\}$ are mapped exactly to those functions
in $Z_{q}$ which are different from the constant function $\underline{q}\to 1$.
Therefore claim~\eqref{lem25.2} follows from Lemma~\ref{nlem53}.

To prove claim~\eqref{lem25.7}, assume $k>2$. We prove the first equality, the second
one follows because of symmetry by swapping the order.  Consider a special function $\alpha$ supported on 
$\{1,2,\dots,l_2\}$, in particular, $\alpha$ is zero outside this interval. 
Without loss of generality we may assume that the arrows in this interval are of the form 
$x\to x-1$ (the other case is similar). Then $\alpha(l_{2})<l_{2}$.
Similarly to the above, there is a bijection between such functions and order decreasing
and order preserving transformations of $\underline{l_2}$ different from the 
unique constant transformation and satisfying $\alpha(l_{2})<l_{2}$. 
Therefore claim~\eqref{lem25.7} follows from Lemma~\ref{nlem52}.

To prove claim~\eqref{lem25.3}, assume $k>2$ and $i,j\in\{2,3,\dots,k-2\}$ are chosen such that $i<j$.
Consider a special function $\alpha$ supported on 
$\{l_i,l_i+1,\dots,l_j\}$, in particular, $\alpha$ is zero outside this interval.
Then $\alpha(l_i)\neq l_i$ and $\alpha(l_j)\neq l_j$. At the same time, 
$\alpha(l_s)=l_s$ for all $s$ such that $i<s<j$. The value of $\alpha$ can be chosen independently
on the intervals of the form $\{l_s,l_s+1,\dots,l_{s+1}\}$, where $s$ is such that $i\leq s<j$.
If $s\neq i,j-1$, then on this interval the values of $\alpha$ correspond precisely to 
order preserving and order decreasing (or increasing, depending on the orientation of arrows on
this interval) transformations of this interval, taking into account the conditions
$\alpha(l_s)=l_s$ and $\alpha(l_{s+1})=l_{s+1}$. Hence, from Lemma~\ref{nlem51} it follows that we have
$\mathrm{cat}(l_{s+1}-l_s)$ choices for the values of $\alpha$ on this interval. On the interval
$\{l_i,l_i+1,\dots,l_{i+1}\}$ we additionally have to take into account the condition 
$\alpha(l_i)\neq l_i$ to get exactly $\mathrm{cat}_1(l_{i+1}-l_i+1)$ choices by Lemma~\ref{nlem52}.
By symmetry, we have $\mathrm{cat}_1(l_{j}-l_{j-1}+1)$ choices for the last interval.
Now claim~\eqref{lem25.3} follows by applying the product rule.

Claims~\eqref{lem25.4}---\eqref{lem25.6} are proved similarly to claim~\eqref{lem25.3}. 
\end{proof}

\begin{example}\label{nnewex301}
{\rm  
For the quiver $\xymatrix{1\ar[r]&2&3\ar[l]&4\ar[l]\ar[r]&5&6\ar[l]}$, we have $k=5$, $l_2=2$, $l_3=4$
and $l_4=5$. Proposition~\ref{lem25}\eqref{lem25.3} says that there are exactly 
$\mathrm{cat}_1(3)\mathrm{cat}_1(2)=3$ special functions supported on $\{2,3,4,5\}$. These functions are
\begin{displaymath}
(0,0,3,4,0,0),\quad (0,0,2,4,0,0),\quad (0,0,0,4,0,0). 
\end{displaymath}
}
\end{example}

\subsection{Some notation for the path algebra}\label{s7.26}

Let us go back to an admissible tree quiver $Q$ as defined in Subsection~\ref{s7.2}. Let $A$ be the
path algebra of $Q$ over $\Bbbk$. For $i\in Q_0$ denote by $e_i$ the trivial path in vertex $i$. Then
$P_i=Ae_i$ and $L_i=P_i/\mathrm{Rad}(P_i)$. For each $i,j\in\{1,2,\dots,n\}$ such that 
$j\in\overline{i}$, denote by $\mathtt{a}_{ji}$ the unique path from $i$ to $j$.
Then $\{\mathtt{a}_{ji}\}$ is a basis in the one-dimensional vector space $e_jAe_i$.

From now on we assume that $\mathbf{K}'(Q)\neq\varnothing$. This is  equivalent to the requirement  that $Q$ 
is not isomorphic to the quiver given by \eqref{eq2}.

\subsection{Graph of the identity bimodule}\label{s7.3}

To study subbimodules of the identity bimodule, it is convenient to use a graphical presentation of the latter.
For this we consider ${}_AA_A$ as an $A\otimes A^{\mathrm{op}}$-module, cf. \cite[Chapter~II]{Ba}, or,
equivalently, as a representation of the quiver  $Q\times Q^{\mathrm{op}}$
where we impose all possible commutativity relations. We refer the reader to \cite{Sk} for details
concerning the isomorphism between $A\otimes A^{\mathrm{op}}$ and the quiver algebra,
see also \cite{ASS,Ri} for details on representations of quivers in general.

Viewing ${}_AA_A$ as a representation of $Q\times Q^{\mathrm{op}}$ (with relations) can be arranged into 
a graph, whose vertices are paths in $Q$, with left multiplication by arrows in $Q$ being depicted 
using solid arrow  and right multiplication by arrows in $Q$ being depicted by dashed arrows. 

\begin{example}\label{nnewex302}
{\rm  
If $Q$ is the quiver
\begin{equation}\label{eq6}
\xymatrix{ 
1\ar[r]&2&3\ar[l]&4\ar[l]\ar[r]&5\ar[r]&6,
}
\end{equation}
we obtain the following graphical presentation of ${}_AA_A$:
\begin{equation}\label{eq7}
\xymatrix{
\mathtt{a}_{11}\ar[r]&\mathtt{a}_{21}&&&&\\
&\mathtt{a}_{22}\ar@{-->}[u]\ar@{-->}[d]&&&&\\
&\mathtt{a}_{23}\ar@{-->}[d]&\mathtt{a}_{33}\ar[l]\ar@{-->}[d]&&&\\
&\mathtt{a}_{24}&\mathtt{a}_{34}\ar[l]&\mathtt{a}_{44}\ar[l]\ar[r]&\mathtt{a}_{54}\ar[r]&\mathtt{a}_{64}\\
&&&&\mathtt{a}_{55}\ar@{-->}[u]\ar[r]&\mathtt{a}_{65}\ar@{-->}[u]\\
&&&&&\mathtt{a}_{66}\ar@{-->}[u]\\
}
\end{equation}
Note that the rows of the above are in bijection with indecomposable projective left $A$-modules.
}
\end{example}

\begin{example}\label{nnewex3031}
{\rm  
If $Q$ is the quiver given by the left hand side of \eqref{eq101}, then we have the following 
graphical presentation of ${}_AA_A$:
\begin{equation}
\xymatrix{
&\mathtt{a}_{44}\ar[d]&\\
\mathtt{a}_{11}\ar[d]&\mathtt{a}_{24}&\mathtt{a}_{33}\ar[d]\\
\mathtt{a}_{21}&\mathtt{a}_{22}\ar@{-->}[u]\ar@{-->}[l]\ar@{-->}[r]&\mathtt{a}_{23}
}
\end{equation}
}
\end{example}

\subsection{Diagram of a subbimodule in ${}_AA_A$}\label{s7.4}

Viewing ${}_AA_A$ as a representation of $Q\times Q^{\mathrm{op}}$ with all commutativity relations,
as described in Subsection~\ref{s7.3}, subbimodules in ${}_AA_A$ are exactly subrepresentations.
As all composition multiplicities in $A$ are at most one (since there is at most one path between
any pair of vertices), it follows that subbimodules in $A$ are in bijection with those subsets of 
$\mathtt{a}_{ij}$'s which are closed under successors (i.e. under the action of both dashed and solid 
arrows).  In particular, the smallest subbimodules (i.e. the simple ones) are in bijection with the 
sinks in the diagram of $A$. Furthermore, we have the following easy observations:

\begin{lemma}\label{nlem72}
Let $B$ be a subbimodule of ${}_AA_A$. Then the set of 
all $\mathtt{a}_{ij}$ contained in $B$ is a basis of $B$.
\end{lemma}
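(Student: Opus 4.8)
The plan is to exploit the fact, recorded in Subsection~\ref{s7.26}, that the elements $\mathtt{a}_{ij}$ form a basis of $A$ and that each weight space $e_iAe_j$ is at most one-dimensional, being equal to $\Bbbk\mathtt{a}_{ij}$ when $i\in\overline{j}$ and to $0$ otherwise. This multiplicity-one property is exactly what the tree condition on $Q$ guarantees, since there is at most one path between any two vertices. As a vector space, $A$ therefore decomposes as $\bigoplus_{i,j} e_iAe_j$ into these one- or zero-dimensional pieces.

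First I would observe that any subbimodule $B$ inherits this decomposition. Writing $1=\sum_i e_i$ and using the orthogonality of the primitive idempotents $e_i$, every $b\in B$ can be expanded as $b=\sum_{i,j} e_i b e_j$. Since $B$ is closed under both left and right multiplication by elements of $A$, and in particular by the $e_i$, each summand $e_i b e_j$ again lies in $B$. This yields $B=\bigoplus_{i,j} e_iBe_j$ with $e_iBe_j\subseteq e_iAe_j$ for all $i,j$.

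Next I would use one-dimensionality. For each pair $(i,j)$ the subspace $e_iBe_j$ is contained in the at most one-dimensional space $e_iAe_j=\Bbbk\mathtt{a}_{ij}$, so it is either zero or equals all of $\Bbbk\mathtt{a}_{ij}$; in the latter case $\mathtt{a}_{ij}\in B$. Hence $B$ is spanned precisely by those $\mathtt{a}_{ij}$ which it contains. As these elements are a subset of the basis $\{\mathtt{a}_{ij}\}$ of $A$, they are automatically linearly independent, and so they form a basis of $B$, as claimed.

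There is essentially no hard step here: the only structural input is the multiplicity-one property of ${}_AA_A$, which holds because $Q$ is a tree. The only point requiring a moment of care is the passage to $B=\bigoplus_{i,j} e_iBe_j$, where one must note that a subbimodule is in particular stable under multiplication by the $e_i$ on either side, so that the weight-space projections of any $b\in B$ remain in $B$.
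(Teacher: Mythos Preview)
Your proof is correct and is essentially the same approach as the paper's. The paper simply refers back to the proof of Lemma~\ref{lem10}, whose key content is exactly the multiplicity-one fact $\dim_{\Bbbk}e_iAe_j\leq 1$ that you use; your argument via the Peirce decomposition $B=\bigoplus_{i,j}e_iBe_j$ just makes this reference explicit.
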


\begin{proof}
This follows from  the proof of  Lemma~\ref{lem10}.
\end{proof}

\begin{lemma}\label{nlem71}
There is a bijection between $\mathfrak{M}_Q$ and simple subbimodules in the socle of ${}_A A_A$.
\end{lemma}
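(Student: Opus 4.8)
The plan is to combine the identification of simple subbimodules with sinks of the bimodule diagram, recorded in Subsection~\ref{s7.4}, with a direct translation of ``being a sink'' into a statement about the sources and sinks of $Q$. Since the socle of ${}_AA_A$ is by definition the sum of all its simple subbimodules, every simple subbimodule lies in the socle, so it suffices to produce a bijection between $\mathfrak{M}_Q$ and the set of sinks of the diagram of $A$. First I would pin down exactly which basis vectors $\mathtt{a}_{ji}$ are sinks. Left multiplication by an arrow $\beta\colon j\to k$ of $Q$ sends $\mathtt{a}_{ji}$ to the (nonzero) path $\mathtt{a}_{ki}$, so $\mathtt{a}_{ji}$ admits an outgoing solid arrow precisely when $j$ is not a sink of $Q$; dually, right multiplication by an arrow $\gamma\colon l\to i$ sends $\mathtt{a}_{ji}$ to $\mathtt{a}_{jl}$, so $\mathtt{a}_{ji}$ admits an outgoing dashed arrow precisely when $i$ is not a source of $Q$. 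Hence $\mathtt{a}_{ji}$ is a sink of the diagram if and only if $i$ is a source of $Q$, $j$ is a sink of $Q$, and $j\in\overline{i}$.

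Next I would set up the correspondence. To a sink $\mathtt{a}_{ji}$ I associate the underlying subgraph of the unique oriented path from $i$ to $j$ in $Q$; since all of its arrows point in the same direction, this subgraph is a chain in the sense of Subsection~\ref{s7.2}. Conversely, a maximal chain is a uniformly oriented path $v_0\to v_1\to\cdots\to v_m$, and I associate to it the element $\mathtt{a}_{v_m v_0}$. The two assignments are inverse to each other once one knows they land in the correct sets, and injectivity is immediate from the fact that $Q$ is a tree: a pair of vertices determines at most one path, so distinct sinks give distinct chains and vice versa. (I would also note in passing that a diagonal element $\mathtt{a}_{ii}=e_i$ can never be a sink, since a vertex of a connected $Q$ with $n>1$ cannot be both a source and a sink, so every sink is genuinely off-diagonal and matches a chain with at least one edge.)

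The crux of the argument, and the place I expect to spend the real effort, is verifying that these two notions of extremality match: that the oriented path attached to a sink is actually \emph{maximal} as a chain, and conversely that the endpoints of a maximal chain are a genuine source and sink of $Q$. For the first, if $\mathtt{a}_{ji}$ is a sink then $i$ has in-degree $0$ and $j$ has out-degree $0$, so the path $i\to\cdots\to j$ cannot be prolonged at either end; because $Q$ is a tree it cannot be enlarged in any other way without creating a cycle, so it is maximal. For the second, given a maximal chain $v_0\to\cdots\to v_m$, any arrow into $v_0$ would be distinct from all edges of the chain (again by acyclicity, an arrow $v_k\to v_0$ with $k\geq 1$ together with the chain $v_0\to\cdots\to v_k$ would close a cycle) and would prolong the chain to a strictly larger uniformly oriented path, contradicting maximality; hence $v_0$ is a source, and symmetrically $v_m$ is a sink, so $\mathtt{a}_{v_m v_0}$ is indeed a sink of the diagram. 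This closes the loop and gives the asserted bijection; the tree structure of $Q$ is exactly what is needed to rule out alternative prolongations and to guarantee uniqueness of paths.
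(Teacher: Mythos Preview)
Your proposal is correct and follows essentially the same approach as the paper: both arguments identify the simple subbimodules in the socle with the diagram sinks $\mathtt{a}_{ts}$ and then match these with maximal chains via the characterization that $t$ must be a sink of $Q$ and $s$ a source of $Q$. Your write-up is considerably more explicit than the paper's (which is only three sentences), particularly in verifying that the endpoints of a maximal chain are genuine sources and sinks of $Q$ and in ruling out the diagonal case, but the underlying idea is the same.
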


\begin{proof}
Each maximal chain with source $s$ and sink $t$ contributes the simple subbimodule in the
socle of ${}_A A_A$ with basis $\mathtt{a}_{ts}$.  Conversely, if $\mathtt{a}_{ts}$ belongs to the
socle, then it cannot be multiplied by any arrow from the left and, similarly, by any arrow from the
right. Therefore $\mathtt{a}_{ts}$ is a maximal path, that is it corresponds to a maximal chain.
\end{proof}

Lemma~\ref{nlem72} yields a graphic presentation of $B$ as
a subgraph of the graphic presentation of ${}_AA_A$ discussed in Subsection~\ref{s7.3}.

\begin{example}\label{nnewex303}
{\rm  
For the quiver \eqref{eq6} and the graph \eqref{eq7} of the corresponding identity bimodule, 
the graph of the subbimodule $J_4$, see \eqref{eqan1}, is given by:
\begin{displaymath}
\xymatrix{
\mathtt{a}_{11}\ar[r]&\mathtt{a}_{21}&&&&\\
&\mathtt{a}_{22}\ar@{-->}[u]\ar@{-->}[d]&&&&\\
&\mathtt{a}_{23}\ar@{-->}[d]&\mathtt{a}_{33}\ar[l]\ar@{-->}[d]&&&\\
&\mathtt{a}_{24}&\mathtt{a}_{34}\ar[l]&0\ar@{..>}[l]\ar@{..>}[r]&\mathtt{a}_{54}\ar[r]&\mathtt{a}_{64}\\
&&&&\mathtt{a}_{55}\ar@{-->}[u]\ar[r]&\mathtt{a}_{65}\ar@{-->}[u]\\
&&&&&\mathtt{a}_{66}\ar@{-->}[u]
}
\end{displaymath}
Here $0$ stands on the place of $\mathtt{a}_{44}$ which is missing in $J_4$ from the identity bimodule and 
the dotted arrows depict the  corresponding zero multiplication.
This clearly shows that $J_4$ is a decomposable bimodule. In particular, we obtain that in this case 
the monoid $\mathcal{I}$ should be  rather different from the multisemigroup $\mathcal{S}_{\ccP}$. 
Note that, for example, the linear span of $\mathtt{a}_{44}$ and $\mathtt{a}_{54}$ is not a subrepresentation
as it is not closed with respect to the action of the arrow $\mathtt{a}_{54}\to\mathtt{a}_{64}$.
Therefore this linear span is not a subbimodule. 
}
\end{example}

\subsection{Special function of an indecomposable subbimodule}\label{s7.5}

Let $B$ be a subbimodule of ${}_AA_A$. Then 
\begin{displaymath}
B=\bigoplus_{i,j=1}^n e_jBe_i 
\end{displaymath}
with each $e_jBe_i$ being of dimension at most one. Moreover, $e_jBe_i\neq 0$ implies that $j\in\overline{i}$.
For $i=1,2,\dots,n$, set 
\begin{displaymath}
B_i:=\bigoplus_{j=1}^n e_jBe_i.
\end{displaymath}
Note that $B_i$ is, by construction, 
a submodule of ${}_AB$ and also a submodule of $B\cap Ae_i$, where $Ae_i\cong P_i$. 

\begin{lemma}\label{nlem401}
Let $B$ be an indecomposable subbimodule of ${}_AA_A$. Then each $B_i$ is indecomposable or zero.
\end{lemma}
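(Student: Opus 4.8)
The plan is to analyse the left--module structure of $B_i$ as a submodule of $P_i=Ae_i$, using the graphical description of Subsection~\ref{s7.3}. By Lemma~\ref{nlem72} the module $B_i$ has a basis consisting of those $\mathtt{a}_{ji}$ that lie in $B$, and under left multiplication these form a subrepresentation of $P_i$; since $Q$ is a tree, $P_i$ is the thin representation supported on the subtree $\overline{i}$ with all structure maps nonzero. First I would determine the shape of $\overline{i}$. The representation $P_i$ branches at a vertex $v$ only when $\deg_Q^{\mathrm{out}}(v)\geq 2$; by admissibility such a $v$ has $\deg_Q(v)\geq 3$ and hence lies in $\mathbf{K}(Q)$, which is incompatible with positive in-degree, so $v$ must be a source, and a source of $\overline{i}$ can only be $i$ itself. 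Thus $\overline{i}$ is a directed path unless $i$ is a source with $\deg_Q^{\mathrm{out}}(i)\geq 2$; in the path case every subrepresentation is a suffix, hence uniserial, so $B_i$ is automatically indecomposable or zero and there is nothing to prove.

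It remains to treat the case where $i$ is a source whose outgoing branches $\overline{w_1},\dots,\overline{w_d}$ (each a directed path, by the previous paragraph) satisfy $d\geq 2$. Here I would reduce the decomposable scenario to a single configuration. If $e_i=\mathtt{a}_{ii}\in B$, then closure under left multiplication forces $B_i=P_i$, which is indecomposable. If $e_i\notin B$, then $B_i$ is supported on $\overline{i}\setminus\{i\}$, a disjoint union of the linear branches, so $B_i$ splits as a direct sum of the uniserial pieces $B_i\cap\overline{w_m}$. Consequently, if $B_i$ is decomposable, then necessarily $e_i\notin B$ and $B_i$ meets at least two distinct branches.

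The heart of the argument is to upgrade this splitting of $B_i$ to a splitting of the whole bimodule $B$, contradicting indecomposability. Removing the source $i$ disconnects $Q$ into components $C_1,\dots,C_d$ with $w_m\in C_m$, and to each basis vector $\mathtt{a}_{kl}\in B$ I would attach the label $\beta(k)\in\{1,\dots,d\}$ recording the component of its left index $k$. This label is well defined: no basis path in $B$ has left index $i$, since the only path ending at the source $i$ is $e_i\notin B$; and whenever $l\neq i$ the directed path from $l$ to $k$ avoids $i$, so $k$ and $l$ lie in the same component. Setting $B^{(m)}:=\mathrm{span}\{\mathtt{a}_{kl}\in B:\beta(k)=m\}$, I would verify that each $B^{(m)}$ is a subbimodule and that $B=\bigoplus_{m=1}^d B^{(m)}$. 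Since $B_i$ meets two branches, at least two summands are nonzero, so $B$ is decomposable.

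The main obstacle is precisely the verification that $\beta$ is constant along both solid and dashed arrows, i.e. that left and right multiplication preserve the branch label. For left multiplication this reduces to the observation that an arrow between two vertices $k,k'\neq i$ survives in $Q\setminus\{i\}$, whence $\beta(k)=\beta(k')$; for right multiplication the left index is unchanged, so $\beta$ is preserved trivially. The delicate point, which I would check with care, is that \emph{every} basis path of $B$ avoids the left index $i$ — this is what makes $\beta$ total, and it is exactly where the hypothesis that $i$ is a source is used.
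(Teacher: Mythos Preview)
Your proof is correct and follows essentially the same approach as the paper: both reduce to the case where $i$ is a source of out-degree at least two with $\mathtt{a}_{ii}\notin B$, and then decompose $B$ according to the connected components of $Q\setminus\{i\}$, obtaining a contradiction to indecomposability. Your labeling function $\beta$ on the left index yields exactly the paper's summands $B_i^{(j)}\oplus\bigoplus_{t\in\Gamma^{(j)}}B_t$ (since for $l\neq i$ the path from $l$ to $k$ avoids the source $i$, so $k$ and $l$ share a component), and your explicit verification of closure under left and right multiplication is a point the paper leaves implicit.
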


\begin{proof}
If $\deg_Q^{\mathrm{out}}(i)=1$, then $P_i$ is uniserial since $Q$ is admissible. Therefore
any submodule of $P_i$ is either indecomposable or zero. If $\deg_Q^{\mathrm{out}}(i)>1$,
then $i$ is a source since $Q$ is admissible. If $B_i\cong P_i$, then $B_i$ is indecomposable.

It remains to consider the case $\deg_Q^{\mathrm{out}}(i)>1$ and $B_i\not\cong P_i$.
Consider the full subgraph $Q^{(i)}$ of $Q$ with vertices $Q\setminus\{i\}$.
Let $\Gamma^{(j)}$, where $j=1,2,\dots,m$ for $m\geq 2$, be the list of all 
connected components of $Q^{(i)}$. For $j=1,2,\dots,m$, set
\begin{displaymath}
B_i^{(j)}:=\bigoplus_{t\in \Gamma^{(j)}} e_tBe_i
\quad
\text{ and we have }\quad
B_i=\bigoplus_{j=1}^m B_i^{(j)}.
\end{displaymath}
For each $j=1,2,\dots,m$, the space 
\begin{displaymath}
B_i^{(j)}\oplus \bigoplus_{t\in \Gamma^{(j)}}B_t 
\end{displaymath}
is a direct summand of $B$ as an $A$-$A$--bimodule. Since $B$ is assumed to be indecomposable,
we either have $B_i=0$ or $B_i=B_i^{(j)}$ for some $j$. Since $Q$ is admissible, 
$B_i^{(j)}$ is uniserial and hence indecomposable or zero.
\end{proof}

Lemma~\ref{nlem401} justifies the following definition.
For an indecomposable $B$ define the function $\mathbf{x}_B:Q_0\to Q_0\cup\{0\}$,
in the  following way:
\begin{itemize}
\item if $B_i=0$ for $i\in Q_0$, then set $\mathbf{x}_B(i)=0$;
\item if $B_i\neq 0$ for $i\in Q_0$, then $B_i$ is an indecomposable module
by Lemma~\ref{nlem401} and is projective since $A$ is hereditary, so we can
define $\mathbf{x}_B(i)$ as the unique element in 
$Q_0$ such that $B_i\cong P_{\mathbf{x}_B(i)}$.
\end{itemize}
We also define the {\em support} $\mathrm{supp}(B)$ of $B$ as the union of all maximal chains
in $Q$ which contribute to the bimodule socle of $B$, confer Lemma~\ref{nlem71}.

\begin{example}\label{nnewex311}
{\rm  
The bimodule given in Example~\ref{nnewex303} decomposes into a direct sum of two indecomposable 
summands. The first summand corresponds to the part on the left from $0$. This summand has
the function $(1,2,3,3,0,0)$, support $\{1,2,3,4\}$ and a two-dimensional socle with basis
$\mathtt{a}_{21}$ and $\mathtt{a}_{24}$. The second summand corresponds 
to the part on the right from $0$. This summand has
the function $(0,0,0,5,5,6)$, support $\{4,5,6\}$ and simple socle $\mathtt{a}_{64}$.
}
\end{example}

\begin{lemma}\label{nlem85}
The support of an indecomposable subbimodule $B$ in ${}_AA_A$ is connected. 
\end{lemma}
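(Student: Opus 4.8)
The plan is to argue by contradiction: if $\mathrm{supp}(B)$ were disconnected, then I would produce a non-trivial decomposition of $B$ as an $A$-$A$--bimodule, contradicting the assumed indecomposability. By Lemma~\ref{nlem71}, the simple subbimodules in the socle of $B$ correspond precisely to those maximal chains $X\in\mathfrak{M}_Q$ whose socle basis element $\mathtt{a}_{ts}$ (with $s$ the source and $t$ the sink of $X$) lies in $B$, and $\mathrm{supp}(B)$ is by definition the union of these chains. Write $\mathrm{supp}(B)=\Gamma_1\sqcup\dots\sqcup\Gamma_r$ as the disjoint union of its connected components.

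The key step is to assign to each basis element $\mathtt{a}_{ji}\in B$ (see Lemma~\ref{nlem72}) a well-defined component. Using the graphical description of Subsection~\ref{s7.4}, since $B$ is a subrepresentation it is closed under the action of the solid (left multiplication) and dashed (right multiplication) arrows. Applying these to $\mathtt{a}_{ji}$ extends the oriented path from $i$ to $j$ forwards towards a sink and backwards towards a source, and, as $Q$ is finite, eventually reaches socle basis elements $\mathtt{a}_{ts}\in B$. Each such $\mathtt{a}_{ts}$ corresponds to a maximal chain containing the oriented path from $i$ to $j$, which is unique since $Q$ is a tree; in particular all of these maximal chains contain the vertex $i$ and therefore lie in a common connected component of $\mathrm{supp}(B)$. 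Hence the path from $i$ to $j$ is contained in a single component $\Gamma_{k(i,j)}$, and this defines the assignment $\mathtt{a}_{ji}\mapsto\Gamma_{k(i,j)}$.

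Let $B^{(k)}$ denote the span of those $\mathtt{a}_{ji}\in B$ assigned to $\Gamma_k$. By Lemma~\ref{nlem72} these basis elements partition a basis of $B$, so $B=\bigoplus_{k=1}^r B^{(k)}$ as vector spaces. It then remains to check that each $B^{(k)}$ is closed under left and right multiplication by arrows. If $\mathtt{a}_{ji}\in B^{(k)}$ and left multiplication by an arrow $j\to j'$ yields a non-zero element $\mathtt{a}_{j'i}\in B$, then the path from $i$ to $j'$ contains the path from $i$ to $j$ and hence stays inside $\Gamma_k$, so $\mathtt{a}_{j'i}\in B^{(k)}$; the argument for right multiplication is symmetric. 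Thus $B=\bigoplus_{k=1}^r B^{(k)}$ is a decomposition into subbimodules, and each non-empty $\Gamma_k$ contributes a non-zero summand, since its socle element is assigned to it. As $B$ is indecomposable, exactly one $\Gamma_k$ is non-empty, i.e. $\mathrm{supp}(B)$ is connected.

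The main obstacle is the well-definedness of the component assignment in the second paragraph: one must rule out that two distinct socle elements reachable from a single $\mathtt{a}_{ji}$ lie in different components. This is exactly the point where the tree structure of $Q$ is essential, as it forces all these maximal chains to share the unique path from $i$ to $j$, and in particular the vertex $i$. Once this is in place, the closure of each $B^{(k)}$ under the arrow action and the concluding indecomposability argument are routine.
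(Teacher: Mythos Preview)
Your proof is correct and follows essentially the same strategy as the paper's: both assume a disconnection $\mathrm{supp}(B)=\Gamma_1\sqcup\Gamma_2$ and decompose $B$ as a bimodule by assigning each $B_i$ (equivalently, each basis element $\mathtt{a}_{ji}$) to the component containing the column index $i$. The paper reaches this assignment more directly, simply noting that $B_i\neq 0$ forces $AB_iA$ to hit the socle and hence $i\in\mathrm{supp}(B)$, and then defining $B^{(s)}:=\bigoplus_{j\in\Gamma_s}B_j$; your detour through extending $\mathtt{a}_{ji}$ to socle elements and invoking the tree structure arrives at the same partition but with more overhead than is strictly needed.
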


\begin{proof}
Assume $\mathrm{supp}(B)$ is the disjoint union of two non-empty sets $\Gamma_1$ and $\Gamma_2$.
Each of these is a  union of maximal chains in $Q$. Let $i\in Q_0$
be such that $B_i\neq 0$. Then $AB_iA$ intersects the socle of $B$ and hence
$i$ belongs to some maximal chain $X\subset \mathrm{supp}(B)$, in particular,
$i\in \Gamma_1$ or $i\in \Gamma_2$.

For $s=1,2$, let $B^{(s)}$ be the $\Bbbk$-span of all $B_j$, where $j\in \Gamma_s$.
Then $B=B^{(1)}\oplus B^{(2)}$. By construction, both $B^{(1)}$ and $B^{(2)}$ 
are left $A$-submodules of $B$. Since each $\Gamma_s$ is a union of maximal chains,
each $B^{(s)}$ is even a right $A$-submodule of $B$, in particular, a right 
$A$-$A$--subbimodule. Therefore $B$ is decomposable.
\end{proof}

\begin{proposition}\label{prop26}
Let $B$ be an indecomposable subbimodule of ${}_AA_A$. 
Then $\mathbf{x}_B$ is a special function and $\mathrm{supp}(\mathbf{x}_B)=\mathrm{supp}(B)$.
\end{proposition}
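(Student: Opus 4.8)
The plan is to first make the function $\mathbf{x}_B$ completely explicit and then verify, one by one, the three clauses in the definition of a special function (path function, monotone, and the degree condition at $\mathbf{K}(Q)$) together with the support identity. For the explicit description, recall that $B_i\subset P_i=Ae_i$ and that, by Lemma~\ref{nlem72}, $B_i$ has a basis consisting of exactly those paths $\mathtt{a}_{ji}$ lying in $B$. When $B_i\neq0$ it is indecomposable projective by Lemma~\ref{nlem401}, hence cyclic with simple top $L_{\mathbf{x}_B(i)}$; since $A$ is hereditary and all composition multiplicities are at most one, this forces $B_i=A\mathtt{a}_{\mathbf{x}_B(i),i}$, the submodule of $P_i$ generated by the unique path from $i$ to $\mathbf{x}_B(i)$, with basis $\{\mathtt{a}_{ji}\,:\,j\in\overline{\mathbf{x}_B(i)}\}$. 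In particular $\mathbf{x}_B(i)\in\overline{i}$, so $\mathbf{x}_B$ is already a path function.

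For monotonicity I would take $i\in\overline{j}$ with $\mathbf{x}_B(i)=k\neq0$. The path $\mathtt{a}_{ij}$ from $j$ to $i$ gives, by right multiplication, $\mathtt{a}_{ki}\cdot\mathtt{a}_{ij}=\mathtt{a}_{kj}\in B_j$, so $B_j\neq0$; writing $m=\mathbf{x}_B(j)$, membership $\mathtt{a}_{kj}\in B_j=A\mathtt{a}_{mj}$ forces $k\in\overline{m}$, i.e. $\mathbf{x}_B(i)\in\overline{\mathbf{x}_B(j)}$. This is precisely monotonicity.

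Next I would prove $\mathrm{supp}(\mathbf{x}_B)=\mathrm{supp}(B)$ by comparing, for each maximal chain $X$ with source $s$ and sink $t$ (cf. Lemma~\ref{nlem71}), the conditions for $X$ to lie in each support. If $\mathtt{a}_{ts}\in B$, then $\mathtt{a}_{ts}\in B_s=A\mathtt{a}_{\mathbf{x}_B(s),s}$ shows $\mathbf{x}_B(s)$ lies on the $s$--$t$ path, hence $\mathbf{x}_B(s)\in X$ with $s\in X$, so $X\subset\mathrm{supp}(\mathbf{x}_B)$. Conversely, suppose some $i\in X$ has $\mathbf{x}_B(i)=k\in X$; then $i$ and $k$ lie on the uniformly oriented chain $X$ with $k\in\overline{i}$, and left multiplication by $\mathtt{a}_{tk}$ followed by right multiplication by $\mathtt{a}_{is}$ turns $\mathtt{a}_{ki}\in B$ into $\mathtt{a}_{ts}\in B$, so $X\subset\mathrm{supp}(B)$. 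The key point here is that the support counts only those $X$ for which \emph{both} $i$ and $\mathbf{x}_B(i)$ lie on $X$, which keeps both multiplications inside the chain; this is what makes the two directions match exactly. Connectedness of $\mathrm{supp}(\mathbf{x}_B)$ is then immediate from Lemma~\ref{nlem85}.

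The remaining clause, that $\mathbf{x}_B(i)=0$ for $i\in\mathbf{K}(Q)\cap\mathrm{supp}(\mathbf{x}_B)$ forces $\deg_{\mathrm{supp}(\mathbf{x}_B)}(i)=1$, is where I expect the real work. For a source $i$ it is vacuous: if $i\in\mathrm{supp}(B)$ then $i$ is the source of some chain $X\subset\mathrm{supp}(B)$ whose socle element $\mathtt{a}_{t_Xi}$ lies in $B_i$, contradicting $B_i=0$. The genuine case is a sink $i$ with $B_i=0$, and here the hard part is to extract a decomposition of $B$. Since $i$ is a sink, deleting it splits $Q$ into components $\Gamma^{(1)},\dots,\Gamma^{(d)}$, one per in-neighbour, and I would set $B^{(p)}:=\bigoplus_{v\in\Gamma^{(p)}}B_v$. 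Because $B_i=0$, every right-multiplication map attached to an arrow incident to $i$ vanishes, and since no path can leave the sink $i$, no bimodule structure links distinct $B^{(p)}$; thus $B=\bigoplus_p B^{(p)}$ as $A$-$A$--bimodules. If two chains into $i$ were in $\mathrm{supp}(B)$, their socle elements would lie in two different nonzero summands $B^{(p)}$, contradicting indecomposability of $B$. Hence at most one chain through $i$ lies in $\mathrm{supp}(B)=\mathrm{supp}(\mathbf{x}_B)$, giving $\deg_{\mathrm{supp}(\mathbf{x}_B)}(i)=1$. Combining all steps shows $\mathbf{x}_B$ is special with $\mathrm{supp}(\mathbf{x}_B)=\mathrm{supp}(B)$.
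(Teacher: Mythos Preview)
Your proof is correct and follows essentially the same approach as the paper's own argument: you verify the path-function and monotonicity conditions via right multiplication, establish the support identity by checking both inclusions chain-by-chain (the paper does this by focusing on the source of each maximal chain, but the content is the same), and handle the degree condition at a sink $i$ with $B_i=0$ by decomposing $B$ along the connected components of $Q\setminus\{i\}$, exactly as the paper does. The only differences are expository---your account is somewhat more explicit about the basis elements $\mathtt{a}_{ji}$ and the description $B_i=A\mathtt{a}_{\mathbf{x}_B(i),i}$---but there is no genuine divergence in strategy.
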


\begin{proof}
If $P_j$ is a submodule of $P_i$, then there is an oriented path from $i$ to $j$ in $Q$.
Therefore $\mathbf{x}_B(i)\in\overline{i}$ and thus $\mathbf{x}_B$ is a path function.

Assume that there is an oriented path $\alpha$ from $i$ to $j$ in $Q$
and that $\mathbf{x}_B(j)\neq 0$. Then right multiplication
with $\alpha$ defines an injective homomorphism from $P_j$ to $P_i$ inside $A$. Restricting this homomorphism
to $B$ gives an injective homomorphism from $B_j\cong P_{\mathbf{x}_B(j)}$
to $B_i\cong P_{\mathbf{x}_B(i)}$. This means that $\mathbf{x}_B(i)\neq 0$
and $\mathbf{x}_B(j)\in\overline{\mathbf{x}_B(i)}$. Therefore $\mathbf{x}_B$ is a monotone function.

Let $X\in\mathfrak{M}_Q$ with source $i$ and sink $j$. If $\mathbf{x}_B(i)\not\in X$, then 
$\mathbf{x}_B(s)=0$ for all $s\in X\setminus\{i\}$ since  $\mathbf{x}_B$ is a monotone path function.
In particular, $X$ does not contribute to the support of $\mathbf{x}_B$ (note that either $i$
or $j$ might still belong to the support via some other maximal chains). In this case we
also have $\mathtt{a}_{ji}\not\in B$.

Let $X\in\mathfrak{M}_Q$ with source $i$ and sink $j$. 
If $\mathbf{x}_B(i)\in X$, then $X\subset\mathrm{supp}(\mathbf{x}_B)$ and $\mathtt{a}_{ji}$
spans a simple subbimodule in the socle of $B$. From this and the previous paragraph
it follows that $\mathrm{supp}(\mathbf{x}_B)=\mathrm{supp}(B)$.
In particular, from Lemma~\ref{nlem85} we obtain that $\mathrm{supp}(\mathbf{x}_B)$ is connected.

It remains to show that the equality  $\mathbf{x}_B(i)=0$ for 
$i\in \mathbf{K}(Q)\cap \mathrm{supp}(B)$ implies that $\deg_{\mathrm{supp}(B)}(i)=1$.
If $i$ were a source, then $\mathbf{x}_B(i)=0$ would imply $i\not\in \mathrm{supp}(B)$, which is a contradiction.
Therefore $i$ is a sink. Consider the full subgraph $Q^{(i)}$ of $Q$ with vertices
$Q\setminus\{i\}$. Let $\Gamma^{(j)}$, for $j=1,2,\dots,m$ where $m\geq 1$, be the list of all 
connected components of $Q^{(i)}$. For $j=1,2,\dots,m$, denote by $B^{(j)}$ the  $A$-$A$-bimodule
direct summand of $B$ spanned, over $\Bbbk$, by all $B_s$, where $s\in \Gamma^{(j)}$.
Since $B$ is indecomposable, only one of these direct summands is non-zero. Without loss of
generality we assume that this non-zero direct summand is $B^{(1)}$. There are $m$ maximal chains with
sink $i$, one for each $\Gamma^{(j)}$. Since only $B^{(1)}$ is non-zero, only the maximal chain from
$\Gamma^{(1)}$ contributes to the socle of $B$. Hence  $\deg_{\mathrm{supp}(B)}(i)=1$.
This completes the proof.
\end{proof}

\subsection{Subbimodules of ${}_AA_A$ associated with special functions}\label{s7.7}

For a special function  $\mathbf{x}=(x_1,x_2,\dots,x_n):Q_0\to Q_0\cup\{0\}$, denote by  $B_{\mathbf{x}}$ the 
subspace in ${}_AA_A$ obtained as the linear span of all $\mathtt{a}_{ts}$ for which 
$x_s\neq 0$ and $t\in \overline{x_s}$. The fact that $\mathbf{x}$ is a path 
function ensures that this definition does make sense. Moreover, for $s\in Q_0$ we have
\begin{equation}\label{nneq17}
(B_{\mathbf{x}})_s\cong 
\begin{cases}
P_{x_s}, & x_s\neq 0,\\
0, & \text{ otherwise},
\end{cases}
\end{equation}
by construction.

\begin{example}\label{ex26}
{\rm
For the quiver
\begin{displaymath}
\xymatrix{
1\ar[r]&2\ar[r]&3&4\ar[l]&5\ar[r]\ar[l]&6\ar[r]&7\ar[r]&8&9\ar[l]
}
\end{displaymath}
and the special function given by  $(0,0,0,3,5,7,8,8,8)$, we have the subbimodule of 
${}_AA_A$ given by the bold and solid part of the diagram in Figure~\ref{fig1}, where the 
the rest of ${}_AA_A$ is shown as connected by dotted arrows.
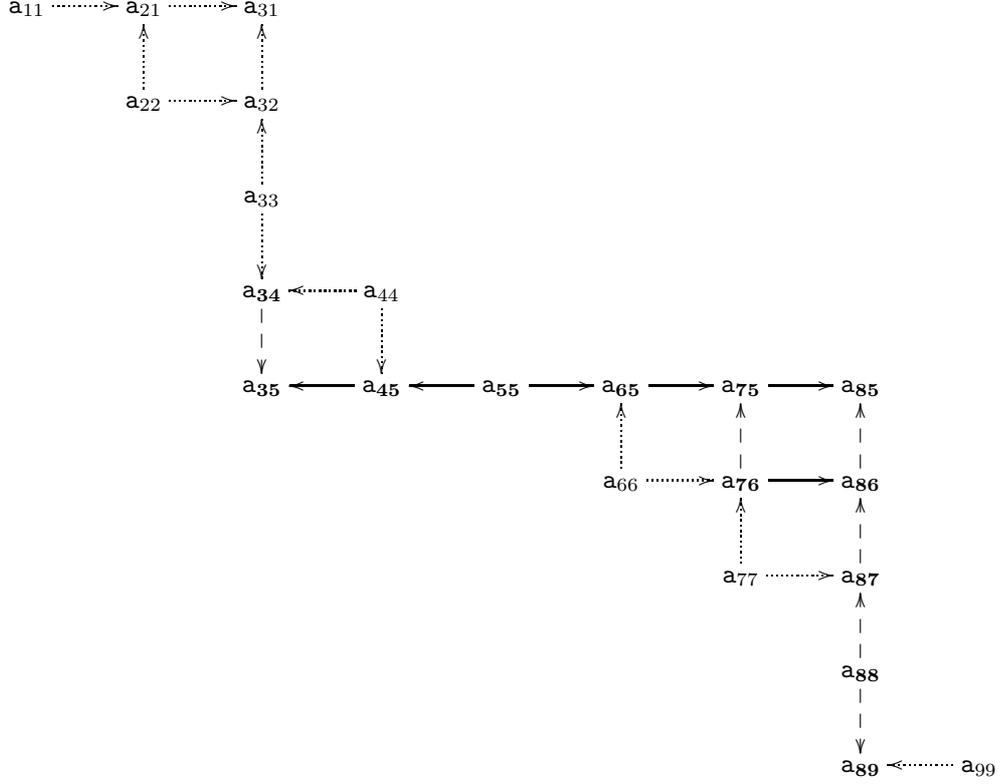
\begin{figure}
\begin{displaymath}
\xymatrix{
\mathtt{a}_{11}\ar@{.>}[r]&\mathtt{a}_{21}\ar@{.>}[r]&\mathtt{a}_{31}&&&&&&\\
&\mathtt{a}_{22}\ar@{.>}[r]\ar@{.>}[u]&\mathtt{a}_{32}\ar@{.>}[u]&&&&&&\\
&&\mathtt{a}_{33}\ar@{.>}[u]\ar@{.>}[d]&&&&&&\\
&&\boldsymbol{\mathtt{a}_{34}}\ar@{-->}[d]&\mathtt{a}_{44}\ar@{.>}[d]\ar@{.>}[l]&&&&&\\
&&\boldsymbol{\mathtt{a}_{35}}&\boldsymbol{\mathtt{a}_{45}}\ar[l]&\boldsymbol{\mathtt{a}_{55}}\ar[l]\ar[r]&
\boldsymbol{\mathtt{a}_{65}}\ar[r]&\boldsymbol{\mathtt{a}_{75}}\ar[r]&\boldsymbol{\mathtt{a}_{85}}&\\
&&&&&\mathtt{a}_{66}\ar@{.>}[r]\ar@{.>}[u]&\boldsymbol{\mathtt{a}_{76}}\ar[r]\ar@{-->}[u]&
\boldsymbol{\mathtt{a}_{86}}\ar@{-->}[u]&\\
&&&&&&\mathtt{a}_{77}\ar@{.>}[r]\ar@{.>}[u]&\boldsymbol{\mathtt{a}_{87}}\ar@{-->}[u]&\\
&&&&&&&\boldsymbol{\mathtt{a}_{88}}\ar@{-->}[u]\ar@{-->}[d]&\\
&&&&&&&\boldsymbol{\mathtt{a}_{89}}&\mathtt{a}_{99}\ar@{.>}[l]\\
}
\end{displaymath}
\caption{Diagram used in Example~\ref{ex26}.}\label{fig1}
\end{figure}
}
\end{example}

\begin{proposition}\label{prop29}
For every special function $\mathbf{x}$, the subspace $B_{\mathbf{x}}$ of ${}_AA_A$ is an indecomposable 
subbimodule and $\mathrm{supp}(\mathbf{x})=\mathrm{supp}(B_{\mathbf{x}})$.
\end{proposition}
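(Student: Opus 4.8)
The plan is to verify three things in turn: that $B_{\mathbf{x}}$ is a subbimodule, that it is indecomposable, and only then that its support equals $\mathrm{supp}(\mathbf{x})$. Throughout I would work inside the graphical presentation of Subsection~\ref{s7.3}: by construction $B_{\mathbf{x}}$ is the span of the paths $\mathtt{a}_{ts}$ with $x_s\neq 0$ and $t\in\overline{x_s}$, and by Lemma~\ref{nlem72} together with the description of subbimodules in Subsection~\ref{s7.4} it suffices to check that this set of paths is closed under the solid arrows (left multiplication) and the dashed arrows (right multiplication). Closure under solid arrows is immediate: if $\mathtt{a}_{ts}\in B_{\mathbf{x}}$ and $t\to t'$ is an arrow, then $t'\in\overline{t}\subseteq\overline{x_s}$, so $\mathtt{a}_{t's}\in B_{\mathbf{x}}$. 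For the dashed arrows, suppose $\mathtt{a}_{ts}\in B_{\mathbf{x}}$ and $u\to s$ is an arrow, so right multiplication by the corresponding path sends $\mathtt{a}_{ts}$ to $\mathtt{a}_{tu}$. Since $s\in\overline{u}$ and $x_s\neq 0$, monotonicity of $\mathbf{x}$ yields $x_u\neq 0$ and $x_s\in\overline{x_u}$, whence $t\in\overline{x_s}\subseteq\overline{x_u}$ and $\mathtt{a}_{tu}\in B_{\mathbf{x}}$. Thus $B_{\mathbf{x}}$ is a subbimodule; this step uses only that $\mathbf{x}$ is a monotone path function.

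For indecomposability I would first reduce to a connectivity statement. Any bimodule endomorphism of $B_{\mathbf{x}}$ preserves each one-dimensional weight space $e_tB_{\mathbf{x}}e_s=\Bbbk\mathtt{a}_{ts}$ and so acts there by a scalar; commutation with the arrow actions, which send basis vectors to basis vectors, forces these scalars to agree along every solid and every dashed edge. Hence $\mathrm{End}_{A\text{-}A}(B_{\mathbf{x}})\cong\Bbbk^{c}$, where $c$ is the number of connected components of the graph of $B_{\mathbf{x}}$, and this ring is local exactly when the graph is connected. Now each nonzero ``column'' $(B_{\mathbf{x}})_s\cong P_{x_s}$ is connected through solid arrows to its top $\mathtt{a}_{x_s,s}$. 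Moreover, whenever $u\to s$ is an arrow with $x_u,x_s\neq 0$, monotonicity gives $x_s\in\overline{x_u}$, so $\mathtt{a}_{x_s,u}\in(B_{\mathbf{x}})_u$, and the dashed arrow attached to $u\to s$ joins $\mathtt{a}_{x_s,s}$ to $\mathtt{a}_{x_s,u}$; thus the columns of any two adjacent active vertices lie in one component. Writing $S=\{s\in Q_0:x_s\neq 0\}$, and noting that every basis vector of $B_{\mathbf{x}}$ lies in a column indexed by an element of $S$, connectedness of the graph reduces to connectedness of the full subgraph of $Q$ supported on $S$.

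The main obstacle is exactly this last point, and it is where the full strength of the definition of a special function enters. Monotonicity shows that $S$ is closed under ancestors for the successor order (if $x_s\neq 0$ and $s\in\overline{q}$ then $x_q\neq 0$), and picking, for $s\in S$, a maximal chain through $s$ and $x_s$ one sees $S\subseteq\mathrm{supp}(\mathbf{x})=:\Gamma$. Since $\mathbf{x}$ is special, $\Gamma$ is connected. Suppose $S$ were disconnected; then I would choose $a,b\in S$ in distinct components of $S$ at minimal $\Gamma$-distance, so that the $\Gamma$-path $a=v_0-v_1-\cdots-v_k=b$ has all interior vertices inactive and $k\geq 2$. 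The ancestor-closure property forces the two end edges to point outward from $S$, that is $v_0\to v_1$ and $v_k\to v_{k-1}$ (otherwise $v_1$ or $v_{k-1}$ would be an ancestor of $a$ or $b$ and hence lie in $S$). Reading the orientations along the path, which begins with an edge pointing forward and ends with one pointing backward, there must be an interior vertex $v_i$ that is a local sink, i.e. both its path-edges point into it. Then $\deg_Q^{\mathrm{in}}(v_i)\geq 2$, so $v_i$ is a sink and $v_i\in\mathbf{K}(Q)$ by admissibility, while $v_i$ is inactive ($x_{v_i}=0$) and satisfies $\deg_{\Gamma}(v_i)\geq 2$. This contradicts the defining property of a special function, which requires $\deg_{\Gamma}(v_i)=1$ at any inactive vertex of $\mathbf{K}(Q)\cap\Gamma$. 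Hence $S$ is connected and $B_{\mathbf{x}}$ is indecomposable.

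Finally, for the support I would avoid a direct combinatorial computation of the socle and instead invoke Proposition~\ref{prop26}. Now that $B_{\mathbf{x}}$ is known to be an indecomposable subbimodule, the function $\mathbf{x}_{B_{\mathbf{x}}}$ is defined, and by \eqref{nneq17} we have $(B_{\mathbf{x}})_s\cong P_{x_s}$ for every $s$, so that $\mathbf{x}_{B_{\mathbf{x}}}=\mathbf{x}$ as functions. Proposition~\ref{prop26} then gives $\mathrm{supp}(\mathbf{x})=\mathrm{supp}(\mathbf{x}_{B_{\mathbf{x}}})=\mathrm{supp}(B_{\mathbf{x}})$, which completes the proof.
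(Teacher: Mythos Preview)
Your proof is correct but follows a genuinely different route from the paper, especially for indecomposability and in the order in which the remaining two claims are established. The paper first proves $\mathrm{supp}(\mathbf{x})=\mathrm{supp}(B_{\mathbf{x}})$ directly, by checking for each maximal chain $X$ with source $i$ and sink $j$ whether $\mathbf{x}(i)\in X$ (equivalently whether $\mathtt{a}_{ji}\in B_{\mathbf{x}}$), and only then argues indecomposability by contradiction: assuming $B_{\mathbf{x}}=B^{(1)}\oplus B^{(2)}$, connectedness of the support forces $\mathrm{supp}(B^{(1)})$ and $\mathrm{supp}(B^{(2)})$ to share a sink or source $s$, and a case split on whether $\mathbf{x}(s)$ equals $s$, equals $0$, or is something else shows that either $\mathtt{a}_{ss}$ or the unique relevant socle element would have to lie in both summands. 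You instead compute $\mathrm{End}_{A\text{-}A}(B_{\mathbf{x}})\cong\Bbbk^{c}$ via the weight-space decomposition and reduce indecomposability to connectedness of the active set $S=\{s:x_s\neq 0\}$, which you settle by a clean local-sink argument exploiting ancestor-closure of $S$ together with the degree-one condition in the definition of a special function; the support equality then drops out of Proposition~\ref{prop26} once you observe $\mathbf{x}_{B_{\mathbf{x}}}=\mathbf{x}$. Your approach is more structural and avoids the sink/source case analysis, at the small cost of importing Proposition~\ref{prop26} (which is proved earlier and independently, so there is no circularity); the paper's approach is self-contained and yields the support equality without appealing to the converse construction.
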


\begin{proof}
From \eqref{nneq17} it follows that $B_{\mathbf{x}}$ is closed with respect to the left $A$-action.
From the fact that $\mathbf{x}$ is monotone, it follows that $B_{\mathbf{x}}$ is closed with 
respect to the right $A$-action. Therefore $B_{\mathbf{x}}$ is a subbimodule of ${}_AA_A$.

Let $X\in\mathfrak{M}_Q$ be a maximal chain with source $i$ and sink $j$. If $\mathbf{x}(i)\not\in X$, then 
$X\not\subset \mathrm{supp}(\mathbf{x})$ by definition and $X\not\subset \mathrm{supp}(B_{\mathbf{x}})$
by construction (as $\mathtt{a}_{ji}\not\in B_{\mathbf{x}}$). If $\mathbf{x}(i)\in X$, then 
$X\subset \mathrm{supp}(\mathbf{x})$ by definition and $X\not\subset \mathrm{supp}(B_{\mathbf{x}})$
by construction (as $\mathtt{a}_{ji}\in B_{\mathbf{x}}$). Hence 
$\mathrm{supp}(\mathbf{x})=\mathrm{supp}(B_{\mathbf{x}})$.

It remains to show that $B_{\mathbf{x}}$ is indecomposable. Assume that this is not the case and write
$B_{\mathbf{x}}\cong B^{(1)}\oplus B^{(2)}$, where both direct summands are subbimodules. Since 
$\mathrm{supp}(B_{\mathbf{x}})=\mathrm{supp}(\mathbf{x})$ is connected, $\mathrm{supp}(B^{(1)})$
and $\mathrm{supp}(B^{(2)})$ must have at least one common element, say $s$. This common element 
can be chosen such that it is
an end element of a maximal chain, hence it is either a source of a sink. Assume first that $s$
is a source. Then $\mathbf{x}(s)=s$ implies that $\mathtt{a}_{ss}$ must be in both $B^{(1)}$
and $B^{(2)}$ since $(B_{\mathbf{x}})_s\cong P_s$ is indecomposable, which contradicts the fact that
the intersection of these two subbimodules is trivial. Further, $\mathbf{x}(s)\neq s$ 
implies that only one maximal chain 
starting with $s$, namely the one containing $\mathbf{x}(s)$, can contribute to the socle of 
$B_{\mathbf{x}}$. That socle element must be either in $B^{(1)}$ or $B^{(2)}$, but it cannot be 
in both of them, which contradicts the fact that $s$ is a common element in 
$\mathrm{supp}(B^{(1)})$ and $\mathrm{supp}(B^{(2)})$. 

Therefore $s$ is a sink. If $\mathbf{x}(s)=s$, then for any $X\in \mathrm{supp}(B_{\mathbf{x}})$
with sink $s$ one can use right multiplication with elements in $A$ to move $\mathtt{a}_{ss}$ to the 
socle element of $B_{\mathbf{x}}$ corresponding to $X$. Therefore $\mathtt{a}_{ss}$ must be 
in both $B^{(1)}$ and $B^{(2)}$, which gives a similar contradiction 
to the above. If $\mathbf{x}(s)\neq s$, then $\mathbf{x}(s)=0$
since $s$ is a sink. By definition of a special function, we thus have that $s$ has degree $1$ in
$\mathrm{supp}(\mathbf{x})$. Therefore only one maximal chain  ending at $s$ can contribute to the socle of 
$B_{\mathbf{x}}$. That socle element must be either in $B^{(1)}$ or $B^{(2)}$, but it cannot be 
in both of them, which is again a similar contradiction to the above. This completes the proof.
\end{proof}

\subsection{Classification of indecomposable subbimodules of ${}_AA_A$}\label{s7.8}
We can now collect the above facts into the following statement.

\begin{theorem}\label{thm31}
The maps  $B\mapsto  \mathbf{x}_B$ and $\mathbf{x}\mapsto B_{\mathbf{x}}$ are mutually inverse bijections
between the set of all indecomposable subbimodules of ${}_AA_A$ and the set of all special functions.
\end{theorem}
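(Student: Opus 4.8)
The plan is to assemble the two preceding propositions into the claimed bijection by verifying that the maps $B\mapsto\mathbf{x}_B$ and $\mathbf{x}\mapsto B_{\mathbf{x}}$ are mutually inverse. Proposition~\ref{prop26} already guarantees that $B\mapsto\mathbf{x}_B$ sends an indecomposable subbimodule of ${}_AA_A$ to a special function, and Proposition~\ref{prop29} guarantees that $\mathbf{x}\mapsto B_{\mathbf{x}}$ sends a special function to an indecomposable subbimodule. Thus both maps are well defined between the two sets in question, and all that remains is to establish the two composite identities $\mathbf{x}_{B_{\mathbf{x}}}=\mathbf{x}$ and $B_{\mathbf{x}_B}=B$.

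First I would dispose of the identity $\mathbf{x}_{B_{\mathbf{x}}}=\mathbf{x}$, which is essentially immediate from \eqref{nneq17}. For a special function $\mathbf{x}=(x_1,\dots,x_n)$ and each $s\in Q_0$, formula~\eqref{nneq17} gives $(B_{\mathbf{x}})_s\cong P_{x_s}$ when $x_s\neq 0$ and $(B_{\mathbf{x}})_s=0$ when $x_s=0$. Feeding this into the very definition of the special function attached to an indecomposable subbimodule (which applies since $B_{\mathbf{x}}$ is indecomposable by Proposition~\ref{prop29}), we read off that $\mathbf{x}_{B_{\mathbf{x}}}(s)=0$ exactly when $x_s=0$, while otherwise $\mathbf{x}_{B_{\mathbf{x}}}(s)$ is the unique index $y$ with $(B_{\mathbf{x}})_s\cong P_y$, forcing $y=x_s$. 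Hence $\mathbf{x}_{B_{\mathbf{x}}}=\mathbf{x}$.

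The more substantial step is the identity $B_{\mathbf{x}_B}=B$, and I expect this to be the main obstacle. By Lemma~\ref{nlem72} the set of basis paths $\mathtt{a}_{ts}$ contained in $B$ is a basis of $B$, and $B_{\mathbf{x}_B}$ likewise has a basis consisting of the paths it contains, so it suffices to show that a given $\mathtt{a}_{ts}$ lies in $B$ if and only if it lies in $B_{\mathbf{x}_B}$. Since $\mathtt{a}_{ts}\in B$ is equivalent to $\mathtt{a}_{ts}\in B_s$, I would argue one vertex $s$ at a time. If $B_s=0$ then $\mathbf{x}_B(s)=0$ and neither bimodule contains any $\mathtt{a}_{ts}$, so there is nothing to check. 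If $B_s\neq 0$, put $x:=\mathbf{x}_B(s)$, so that $B_s\cong P_x$ with $x\in\overline{s}$. The crux is to pass from this abstract isomorphism type to a concrete set of paths: because every composition multiplicity in ${}_AA_A$ is at most one (as in the proof of Lemma~\ref{lem10}), a submodule of $P_s\cong Ae_s$ is determined by the set of basis paths it contains, and the simple top $L_x$ of $P_x$ forces any copy of $P_x$ inside $P_s$ to be generated in the one-dimensional space $e_x A e_s=\Bbbk\mathtt{a}_{xs}$. Consequently $B_s$ is the cyclic submodule $A\mathtt{a}_{xs}$ generated by the path $\mathtt{a}_{xs}$ from $s$ to $x$, whose basis is precisely $\{\mathtt{a}_{ts}\,:\,t\in\overline{x}\}$. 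This is exactly the set of paths $\mathtt{a}_{ts}$ admitted into $B_{\mathbf{x}_B}$ by the defining condition $\mathbf{x}_B(s)=x\neq 0$ and $t\in\overline{\mathbf{x}_B(s)}$. Hence $B$ and $B_{\mathbf{x}_B}$ contain the same basis paths and therefore coincide, which together with the previous paragraph completes the proof.
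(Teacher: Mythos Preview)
Your proof is correct and follows essentially the same approach as the paper's own proof, which likewise invokes Propositions~\ref{prop26} and~\ref{prop29} for well-definedness and then verifies the two composite identities directly from the definitions. The paper is terser, simply asserting that $\mathbf{x}_{B_{\mathbf{x}}}(i)=\mathbf{x}(i)$ and $(B_{\mathbf{x}_B})_i=B_i$ ``follow from the definitions''; your version usefully unpacks the second identity by explaining why an abstract isomorphism $B_s\cong P_x$ pins down $B_s$ as the concrete submodule $A\mathtt{a}_{xs}\subset P_s$ (using that $\dim e_xAe_s\leq 1$), which is exactly the content hidden behind the paper's phrase.
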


\begin{proof}
Let $\mathbf{x}$ be a special function. Then $B_{\mathbf{x}}$ is an indecomposable subbimodule of 
${}_AA_A$ by Proposition~\ref{prop29}. Furthermore, 
$\mathbf{x}_{B_{\mathbf{x}}}(i)=\mathbf{x}(i)$ for all $i\in Q_0$, and thus also
$\mathbf{x}_{B_{\mathbf{x}}}=\mathbf{x}$ follow from the definitions. 

Conversely, let $B$ is an indecomposable subbimodule of ${}_AA_A$. Then $\mathbf{x}_B$ is a special
function by Proposition~\ref{prop26}. Furthermore, 
$(B_{\mathbf{x}_{B}})_i=B_i$ for all $i\in Q_0$, and thus also
$B_{\mathbf{x}_{B}}=B$ follow from the definitions. 
\end{proof}

For a non-empty $\Gamma\in\mathbf{W}$, we denote by $\mathbf{B}(\Gamma)$ the set of all indecomposable subbimodules 
$B\subset {}_AA_A$ for which $\mathrm{supp}(B)=\Gamma$. We set $\mathbf{B}(\varnothing)=\{0\}$.

\subsection{Partial order}\label{s7.9}

We identify $\mathcal{S}_{\ccP}$ with the subset $\mathcal{I}^{\mathrm{ind}}$ of $\mathcal{I}$ consisting of
all indecomposable subbimodules to which we attach an external element $0$ (which corresponds to the zero bimodule). 
To this end, we do not know whether $\mathcal{I}^{\mathrm{ind}}$ is a 
submonoid of $\mathcal{I}$, that is, whether $IJ\in \mathcal{I}^{\mathrm{ind}}$ for any $I,J\in
\mathcal{I}^{\mathrm{ind}}$. The original multivalued operation in $\mathcal{I}^{\mathrm{ind}}$ sends
$(I,J)$ to the set of all indecomposable direct summands of $IJ$ up to isomorphism.

The set $\mathcal{I}^{\mathrm{ind}}$ inherits from $\mathcal{I}$ the partial order given by inclusions. 
Clearly, ${}_AA_A$ is the maximum element with respect to this order both in 
$\mathcal{I}^{\mathrm{ind}}$ and in $\mathcal{I}$ (note that ${}_AA_A\in \mathcal{I}^{\mathrm{ind}}$
as we assume $Q$ to be connected).

Consider the set $\mathbf{Q}:=\{J_s\,:\,s=1,2,\dots,n\}$ and note that this is the set of maximal elements 
in $\mathcal{I}\setminus\{{}_AA_A\}$. The bimodule $J_s$ is indecomposable 
if and only if we have $s\not\in\mathbf{K}'(Q)$. For $s\in \mathbf{K}'(Q)$, let $t_1,t_2,\dots,t_{m_s}$
be the list of all $t\in Q_0$ for which  there is an arrow $t\to s$ or an arrow $s\to t$. Let 
$\Gamma$ be the full subgraph of $Q$ with vertex set $Q_0\setminus\{s\}$. Then 
\begin{displaymath}
\Gamma=\Gamma^{(1)}\cup \Gamma^{(2)}\cup\dots \cup \Gamma^{(m_s)}  
\end{displaymath}
where $\Gamma^{(q)}$ is the connected component containing $t_q$ for $q=1,2,\dots,m_s$. We have the decomposition 
\begin{equation}\label{eq452}
J_s\cong \bigoplus_{q=1}^{m_s} J_s^{(q)} 
\end{equation}
where $J_s^{(q)}$ is the subbimodule of $J_s$ defined as the direct sum of all $e_j(J_s)e_i$ 
with $i,j\in \Gamma^{(q)}\cup\{s\}$. Clearly, each $J_s^{(q)}$ is indecomposable
since $\Gamma^{(q)}$ is connected.

\begin{lemma}\label{lem41}
{\hspace{2mm}}

\begin{enumerate}[$($i$)$]
\item\label{lem41.1} The set $\{J_s\,:\,s\not\in\mathbf{K}'(Q)\}$ is the set of maximal elements in 
$\mathbf{B}(Q)\setminus\{{}_AA_A\}$.
\item\label{lem41.2} For $\Omega\in\mathbf{W}\setminus \{Q\}$, there is a unique maximal element,
denoted $B_{\Omega}$, in the set $\mathbf{B}(\Omega)$. Moreover,  $B_{\varnothing}=0$ and, for 
$\Omega\neq \varnothing$, we have
\begin{equation}\label{eqeq12}
B_{\Omega}=\prod_{t} J_t^{(p_t)}\prod_{s} J_s^{(q_s)}
\end{equation}
where $s$ runs through the set of sinks $i\in \mathbf{K}'(Q)\cap \Omega$ for which 
$\deg_{\Omega}(i)=1$ and $q_s$ is such that the corresponding $\Gamma^{(q_s)}$ has a common vertex with $\Omega$,
while $t$ runs through the set of sources $i\in \mathbf{K}'(Q)\cap \Omega$ for which 
$\deg_{\Omega}(i)=1$ and $p_t$ is such that the corresponding $\Gamma^{(p_t)}$ has a common vertex with $\Omega$.
\end{enumerate}
\end{lemma}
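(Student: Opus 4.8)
The plan is to translate everything into the language of special functions via Theorem~\ref{thm31}, under which inclusion of subbimodules becomes the order $B_{\mathbf{x}}\subseteq B_{\mathbf{x}'}$ if and only if for every $i$ with $x_i\neq 0$ one has $x'_i\neq 0$ and $x_i\in\overline{x'_i}$. The single computational engine I will set up first is a multiplication rule: for two monotone path functions $\mathbf{y},\mathbf{z}$ the ideal product satisfies $B_{\mathbf{y}}\cdot B_{\mathbf{z}}=B_{\mathbf{y}\circ\mathbf{z}}$, where $(\mathbf{y}\circ\mathbf{z})(i):=\mathbf{y}(z_i)$ with the convention $\mathbf{y}(0)=0$. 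This follows by expanding the product in the path basis of Lemma~\ref{nlem72} via $\mathtt{a}_{kj}\mathtt{a}_{ji}=\mathtt{a}_{ki}$: for a fixed input $i$ the surviving paths are the $\mathtt{a}_{ki}$ with $k\in\bigcup_{j\in\overline{z_i},\,y_j\neq 0}\overline{y_j}$, and monotonicity of $\mathbf{y}$ collapses this union to $\overline{y_{z_i}}$ (and to $\varnothing$ when $y_{z_i}=0$). Since a composite of monotone path functions is again one, this makes the product of the corresponding subbimodules transparent and will reduce \eqref{eqeq12} to a computation of a composite function.

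For part~\eqref{lem41.1} I first record, using \eqref{eqan1}, that for $s\notin\mathbf{K}'(Q)$ the function $\mathbf{x}_{J_s}$ is the identity except that $\mathbf{x}_{J_s}(s)$ is the unique out-neighbour of $s$ when $s$ has out-degree $1$, and is $0$ when $s$ is a sink; in either case its support is all of $Q$, so $J_s\in\mathbf{B}(Q)$. The order characterization gives, for any special $\mathbf{x}$, that $B_{\mathbf{x}}\subseteq J_s$ if and only if $x_s\neq s$. Now if $B\in\mathbf{B}(Q)\setminus\{{}_AA_A\}$ then $\mathbf{x}_B\neq\mathrm{id}$, and since $\mathrm{supp}(\mathbf{x}_B)=Q$, Lemma~\ref{supportlemma}\eqref{usl4} forces $\mathbf{x}_B(i)=i$ for all $i\in\mathbf{K}(Q)$; hence any $i$ with $\mathbf{x}_B(i)\neq i$ lies outside $\mathbf{K}(Q)$, so $i$ has out-degree $1$ and $i\notin\mathbf{K}'(Q)$, giving $B\subseteq J_i$. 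Conversely, unwinding the order shows that $J_s\subseteq B_{\mathbf{x}}$ forces $\mathbf{x}$ to equal the identity off $s$ and to take one of only two values at $s$, whence $B_{\mathbf{x}}\in\{J_s,{}_AA_A\}$; this proves each $J_s$ is maximal below ${}_AA_A$ and that distinct $J_s$ are incomparable. Together these statements identify the maximal elements of $\mathbf{B}(Q)\setminus\{{}_AA_A\}$ exactly as the $J_s$ with $s\notin\mathbf{K}'(Q)$.

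For part~\eqref{lem41.2} the case $\Omega=\varnothing$ is immediate from $\mathbf{B}(\varnothing)=\{0\}$. For $\Omega\neq\varnothing$ I will write down the candidate maximal function $\mathbf{z}^{\Omega}$: it is $0$ off $\Omega$, equals $i$ on every $i\in\Omega$ with $\deg_{\Omega}(i)=\deg_Q(i)$, and on the remaining vertices — which by Lemma~\ref{supportlemma}\eqref{usl2} have $\deg_{\Omega}(i)=1$ and, by admissibility, are precisely the $i\in\mathbf{K}'(Q)\cap\Omega$ — is $0$ for a sink and the unique $\Omega$-out-neighbour for a source. I first check $\mathbf{z}^{\Omega}$ is a special function with $\mathrm{supp}(\mathbf{z}^\Omega)=\Omega$: its image is exactly the set of full vertices of $\Omega$, and a maximal chain meets this set if and only if it lies in $\Omega$ (a full vertex forces its whole chain into $\Omega$; here one also uses that no maximal chain of $\Omega$ has both endpoints branch-leaves, since that would make $\Omega$ a single source–sink edge, which is never a special subtree). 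Then, using Lemma~\ref{supportlemma}\eqref{usl3}--\eqref{usl4} together with the observation that a branch-leaf source cannot send its value into a pruned branch without enlarging the support, I show every $\mathbf{x}\in\mathbf{C}(\Omega)$ satisfies $B_{\mathbf{x}}\subseteq B_{\mathbf{z}^\Omega}$. Thus $B_{\mathbf{z}^\Omega}$ is the greatest element of $\mathbf{B}(\Omega)$, in particular the unique maximal one; I set $B_\Omega:=B_{\mathbf{z}^\Omega}$.

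It remains to match $B_\Omega$ with the product in \eqref{eqeq12}. Each factor has a known special function coming from \eqref{eq452}: $J_s^{(q_s)}$ is the identity on the $\Omega$-component $\Gamma^{(q_s)}$ of $Q\setminus\{s\}$, is $0$ at the sink $s$, and is $0$ elsewhere, while $J_t^{(p_t)}$ is the identity on $\Gamma^{(p_t)}$, sends the source $t$ to its $\Gamma^{(p_t)}$-out-neighbour, and is $0$ elsewhere. Applying the multiplication rule repeatedly reduces \eqref{eqeq12} to computing the composite of these functions (sinks innermost, sources outermost). I will verify this composite equals $\mathbf{z}^\Omega$ pointwise, using the geometric fact that $Q\setminus\Omega$ is the disjoint union of the pruned branches hanging off the branch-leaves: a vertex outside $\Omega$ is sent to $0$ by the factor of the branch-leaf through which it attaches to $\Omega$ and is fixed by every other factor; a full vertex of $\Omega$ is fixed by every factor; a branch-leaf sink is fixed by all other factors and sent to $0$ by its own; and a branch-leaf source is fixed by all other factors and sent to its $\Omega$-out-neighbour by its own. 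I expect the main obstacle to be precisely this last matching: keeping the bookkeeping of the composition order honest (in particular excluding the degenerate adjacency of a branch-leaf source with a branch-leaf sink, ruled out because a single source–sink edge is not a special subtree) and confirming that the pruned branches are exactly $Q\setminus\Omega$, so that no spurious paths survive the product.
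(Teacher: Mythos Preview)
Your approach is genuinely different from the paper's. The paper proves \eqref{lem41.1} directly from the decomposition \eqref{eq452}: any maximal $B\in\mathbf{B}(Q)\setminus\{{}_AA_A\}$ lies in some $J_s$, and if $s\in\mathbf{K}'(Q)$ then $B$ would split along \eqref{eq452}, contradicting $\mathrm{supp}(B)=Q$. For \eqref{lem41.2} the paper takes $B_\Omega$ to be the \emph{sum} of all subbimodules with support $\Omega$ and shows algebraically, via $(J_s^{(q_s)})^2=J_s^{(q_s)}$ and containments, that this sum coincides with the product \eqref{eqeq12}, treating the case of an adjacent source--sink pair separately. Your route through Theorem~\ref{thm31} and the composition rule $B_{\mathbf{y}}B_{\mathbf{z}}=B_{\mathbf{y}\circ\mathbf{z}}$ is more explicit and yields a concrete formula $B_\Omega=B_{\mathbf{z}^\Omega}$; this is a real gain, and the multiplication rule is a nice tool. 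But two steps need repair.

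In part~\eqref{lem41.1}, Lemma~\ref{supportlemma}\eqref{usl4} does \emph{not} force $\mathbf{x}_B(i)=i$ at leaves: the special function $(2,2,3,4)$ in Example~\ref{nnewex6} has support $Q$ yet sends the leaf $1$ to $2$. What you actually need, and what does follow from \eqref{usl4}, is $\mathbf{x}_B(i)=i$ for all $i\in\mathbf{K}'(Q)$ (those have $\deg_Q(i)\geq 2$); hence any $i$ with $\mathbf{x}_B(i)\neq i$ lies in $Q_0\setminus\mathbf{K}'(Q)$ --- possibly a leaf --- and your order criterion then still gives $B\subseteq J_i$.

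In part~\eqref{lem41.2}, the parenthetical claim that a single source--sink edge is never a special subtree is false: for $Q=\xymatrix@1{1\ar[r]&2&3\ar[l]\ar[r]&4}$ the function $(0,0,2,0)$ is special with support $\{2,3\}$, and the paper explicitly handles this case. Your $\mathbf{z}^\Omega$ and your product computation actually work fine here (at the source one gets $t\mapsto t\mapsto s$ and at the sink $s\mapsto 0\mapsto 0$), so nothing needs to be excluded. What fails is your argument that $\mathrm{supp}(\mathbf{z}^\Omega)=\Omega$: in this example the image of $\mathbf{z}^\Omega$ is $\{2\}$, there are no full vertices, and the chain $\{1,2\}$ meets the image yet is not in $\Omega$. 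You should verify $\mathrm{supp}(\mathbf{z}^\Omega)=\Omega$ directly from the defining condition ``there exists $i\in X$ with $\mathbf{z}^\Omega(i)\in X$'' rather than via ``image meets chain''.
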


\begin{proof}
Clearly each $J_s$ with $s\not\in\mathbf{K}'(Q)$ is maximal in $\mathbf{B}(Q)\setminus\{{}_AA_A\}$.
Assume that $B\in \mathbf{B}(Q)$ is maximal in $\mathbf{B}(Q)\setminus\{{}_AA_A\}$. Then 
$B\subset J_s$ for some $s$. If $s\not\in\mathbf{K}'(Q)$, then $B=J_s$ by maximality of $B$.
If $s\in\mathbf{K}'(Q)$, then 
\begin{displaymath}
B=\bigoplus_{q=1}^{m_s}(B\cap J_s^{(q)}) 
\end{displaymath}
since $B$ has a basis consisting of all $\mathtt{a}_{st}$ contained in it and
all $J_s^{(q)}$ also have the same property. By indecomposability, we get 
$B=B\cap J_s^{(q)}$ for some $q$, which contradicts $B\in \mathbf{B}(Q)$. 
Therefore this case does not occur, which proves  claim~\eqref{lem41.1}.

To prove claim~\eqref{lem41.2} we denote by  $B'_{\Omega}$ the right hand side of \eqref{eqeq12}.
Note that $B_{\varnothing}=0$ is clear and that for $\Omega\neq\varnothing$ the fact that 
$B'_{\Omega}\in \mathbf{B}(\Omega)$ follows by construction. The maximal element
$B_{\Omega}$ in the set $\mathbf{B}(\Omega)$ is the sum of all subbimodules of ${}_A A_A$ with support $\Omega$.
Therefore to complete the proof of claim~\eqref{lem41.2} it remains to check that $B'_{\Omega}=B_{\Omega}$
for $\Omega\neq\varnothing$.

If there are $s$ and $t$ in \eqref{eqeq12} which are connected by an edge, then $\Omega$ must be the full
subgraph of $Q$ with vertices $\{s,t\}$ by connectedness. In this case the only subbimodule of 
${}_A A_A$ with support $\{s,t\}$ is the one with basis $\mathtt{a}_{st}$. Indeed, since both $s$ and 
$t$ have degrees higher than $1$, appearance of either $\mathtt{a}_{ss}$ or $\mathtt{a}_{tt}$
in a subbimodule would lead to an extra maximal chain in its support. Therefore $B_{\Omega}$ has basis
$\mathtt{a}_{st}$. At the same time, $\mathtt{a}_{st}$ appears in $B'_{\Omega}$ as the 
product $\mathtt{a}_{st}\mathtt{a}_{tt}$, where
$\mathtt{a}_{st}\in J_t^{(p_t)}$ and $\mathtt{a}_{tt}\in J_s^{(q_s)}$. 
This means that $B'_{\Omega}=B_{\Omega}$. 

In the remaining case (no  $s$ and $t$ in 
\eqref{eqeq12} are connected by an edge), all factors of \eqref{eqeq12} commute. Note that 
$B_{\Omega}\subset J_s$ and $B_{\Omega}\subset J_t$ for any $s$ and $t$ occurring in  \eqref{eqeq12}.
From indecomposability, it follows that $B_{\Omega}\subset J_s^{(q_s)}$ and $B_{\Omega}\subset J_t^{(p_t)}$ for 
all $s$ and $t$ occurring in  \eqref{eqeq12}. From $J_s^2=J_s$ and $J_t^2=J_t$ it follows that
$(J_s^{(q_s)})^2=J_s^{(q_s)}$ and $(J_t^{(p_t)})^2=J_t^{(p_t)}$. This implies 
$B_{\Omega}J_s^{(q_s)}=B_{\Omega}$ and $B_{\Omega}J_t^{(p_t)}=B_{\Omega}$ which yields
$B_{\Omega}B'_{\Omega}=B_{\Omega}\subset B'_{\Omega}$. From the maximality of $B_{\Omega}$ we finally obtain that $B_{\Omega}=B'_{\Omega}$.
\end{proof}

\subsection{Composition of indecomposable subbimodules}\label{s7.10}
The following is a crucial observation.

\begin{proposition}\label{prop32}
Let $B$ and $D$ be two indecomposable subbimodules in ${}_AA_A$. Then $B\otimes_AD\cong BD$
and the latter is either zero or an indecomposable subbimodule of ${}_AA_A$.
\end{proposition}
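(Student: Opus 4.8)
The plan is to reduce everything to the combinatorics of special functions via Theorem~\ref{thm31}. First I would dispose of the isomorphism $B\otimes_A D\cong BD$. Since $A$ is hereditary, the right $A$-module $B$ is projective (as in the proof of Lemma~\ref{lem6}), so the functor $B\otimes_A-$ is exact; applying it to the inclusion $D\hookrightarrow {}_AA$ shows that the multiplication map $B\otimes_A D\to {}_AA$, $b\otimes d\mapsto bd$, is injective with image $BD$. Equivalently, this is Corollary~\ref{cor7} evaluated at ${}_AA$, using $\mathrm{Dp}_D(A)\cong D$ and $\mathrm{Dp}_B(D)=B\otimes_A D$. In particular $BD$ is a subbimodule of ${}_AA_A$, being a product of two-sided ideals.

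Next I would identify $BD$ explicitly. Write $B=B_{\mathbf{x}}$ and $D=B_{\mathbf{y}}$ with $\mathbf{x},\mathbf{y}$ special, and set $\mathbf{z}=\mathbf{x}\circ\mathbf{y}$, that is $z_i:=x_{y_i}$ with the convention $x_0:=0$. Computing $BD$ column by column, note that $(BD)_u=B\cdot D_u$ and that $D_u\cong P_{y_u}$ is generated by $\mathtt{a}_{y_u,u}$; multiplying the basis of $B$ on the right by $\mathtt{a}_{y_u,u}$ gives $(BD)_u=\mathrm{span}\{\mathtt{a}_{tu}\,:\,t\in\overline{x_{y_u}}\}$ when $y_u\neq0\neq x_{y_u}$, and $(BD)_u=0$ otherwise. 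Comparing with \eqref{nneq17}, this is exactly $B_{\mathbf{z}}$, so $BD=B_{\mathbf{z}}$ and $\mathbf{x}_{BD}=\mathbf{z}$. It therefore remains to show that either $\mathbf{z}\equiv0$ (whence $BD=0$) or $\mathbf{z}$ is a special function, in which case $B_{\mathbf{z}}$ is indecomposable by Proposition~\ref{prop29}.

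That $\mathbf{z}$ is a path function is immediate from transitivity of the successor relation ($x_{y_i}\in\overline{y_i}\subseteq\overline{i}$), and that it is monotone follows by composing the monotonicity of $\mathbf{x}$ and $\mathbf{y}$. For the sink condition, let $i$ be a sink with $i\in\mathbf{K}(Q)\cap\mathrm{supp}(\mathbf{z})$ and $z_i=0$. If $y_i\neq0$ then $y_i=i$, so $x_i=z_i=0$, and since $\mathrm{supp}(\mathbf{z})\subseteq\mathrm{supp}(\mathbf{x})$, specialness of $\mathbf{x}$ gives $\deg_{\mathrm{supp}(\mathbf{z})}(i)=1$; if $y_i=0$, then every maximal chain through $i$ lying in $\mathrm{supp}(\mathbf{z})$ already lies in $\mathrm{supp}(\mathbf{y})$ (trace source values through $\mathbf{z}=\mathbf{x}\circ\mathbf{y}$), so specialness of $\mathbf{y}$ forces $\deg_{\mathrm{supp}(\mathbf{z})}(i)=1$. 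Here I use Lemma~\ref{supportlemma} together with the inclusions $\mathrm{supp}(\mathbf{z})\subseteq\mathrm{supp}(\mathbf{x})\cap\mathrm{supp}(\mathbf{y})$, which come from the source-value characterization of support for monotone path functions; the source case of the condition is vacuous since a source with value $0$ cannot lie in the support.

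The main obstacle is connectedness of $\mathrm{supp}(\mathbf{z})$. One knows only that it is contained in the connected subtree $\mathrm{supp}(\mathbf{x})\cap\mathrm{supp}(\mathbf{y})$, and the example $\mathbf{x}=(2,2,0,0,0)$, $\mathbf{y}=(3,3,3,3,3)$ on the quiver $1\to2\to3\leftarrow4\leftarrow5$ (for which $\mathbf{z}\equiv0$ while the supports meet in $\{1,2,3\}$) shows this containment can be strict, so connectedness genuinely requires the hypothesis $\mathbf{z}\not\equiv0$. I would prove it by taking two chains $X,X'\in\mathrm{supp}(\mathbf{z})$ and showing that every chain $Y$ on the tree-path between them inside $\mathrm{supp}(\mathbf{x})\cap\mathrm{supp}(\mathbf{y})$ satisfies $x_{y_{p(Y)}}\neq0$, where $p(Y)$ denotes the source of $Y$. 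The sets $\{v:x_v\neq0\}$ and $\{v:y_v\neq0\}$ are closed under passing to predecessors (a restatement of monotonicity), and, crucially, every branch vertex of $\mathrm{supp}(\mathbf{x})\cap\mathrm{supp}(\mathbf{y})$ is a sink or a source because $Q$ is admissible, which is what lets one propagate non-vanishing of $\mathbf{x}\circ\mathbf{y}$ across branch points. This propagation, where the Dyck-type combinatorics of Subsections~\ref{s7.3}--\ref{s7.7} really enters, is the technical heart of the argument.
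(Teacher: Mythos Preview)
Your reduction $B\otimes_A D\cong BD$ is identical to the paper's. For indecomposability, however, you take a genuinely different route: you invoke Theorem~\ref{thm31} to rewrite $B=B_{\mathbf{x}}$, $D=B_{\mathbf{y}}$, compute $BD=B_{\mathbf{z}}$ with $\mathbf{z}=\mathbf{x}\circ\mathbf{y}$, and then verify that $\mathbf{z}$ is special. The paper never passes through Theorem~\ref{thm31}; it argues directly with the bimodules. Its key observation is that at any branch vertex $i$ of $\Omega:=\mathrm{supp}(B)\cap\mathrm{supp}(D)$ (a sink or source of $\Omega$-degree $\geq 2$) one has $\mathtt{a}_{ii}\in B$ and $\mathtt{a}_{ii}\in D$, hence $\mathtt{a}_{ii}\in BD$; left or right multiplication then forces every maximal chain of $\Omega$ through $i$ into $\mathrm{supp}(BD)$, giving $\mathrm{supp}(BD)=\Omega$ at once (this is recorded as Corollary~\ref{cor33-1}). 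Indecomposability then follows by the same contradiction as in the proof of Proposition~\ref{prop29}. In your language this is exactly the statement that $z_i=i$ at every branch vertex of $\Omega$, which is what you need for the propagation step.

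Your sketch is essentially correct, but two points deserve tightening. First, the condition you state for the intermediate chain $Y$, namely $z_{p(Y)}\neq 0$, is weaker than what you actually need, which is $z_{p(Y)}\in Y$; the latter follows because some endpoint $i$ of $Y$ has $\deg_\Omega(i)\geq 2$, whence $z_i=i$ by Lemma~\ref{supportlemma}\eqref{usl4}, and monotonicity then pins $z_{p(Y)}$ to the path from $p(Y)$ to $i$, i.e.\ to $Y$. Second, the paper's argument shows directly that $\mathrm{supp}(BD)=\Omega$ whenever $\Omega$ contains at least two maximal chains, so the only way $BD=0$ can occur is when $\Omega$ is empty or a single chain --- this explains your example cleanly and makes the ``$\mathbf{z}\not\equiv 0$'' hypothesis visibly sufficient. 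What your approach buys is the explicit formula $BD=B_{\mathbf{x}\circ\mathbf{y}}$, which is pleasant and potentially useful for the monoid computations in Section~\ref{s8}; what the paper's approach buys is brevity and independence from the classification theorem.
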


\begin{proof}
As $A$ is hereditary, $B\otimes_A{}_-$ is exact, in particular, it preserves inclusions. Hence,
applying it to $D\hookrightarrow A$ gives $B\otimes_A D\hookrightarrow B\otimes_AA\cong B$ where
the last isomorphism is given by the multiplication map. Therefore the multiplication map 
$B\otimes_AD\to BD$ is an isomorphism.

It remains to prove indecomposability of $BD$ in case the latter subbimodule is nonzero. 
Let $\Omega:=\mathrm{supp}(B)\cap\mathrm{supp}(D)$ which is connected as both 
$\mathrm{supp}(B)$ and $\mathrm{supp}(D)$ are. As $BD\subset B\cap D$, we have 
$\mathrm{supp}(BD)\subset \Omega$. If $\Omega$ consists of only one maximal chain,
it follows that $\mathrm{supp}(BD)=\Omega$, that is $BD$ has simple socle and thus is indecomposable.

Let $X$ and $Y$ be two different maximal chains in $\Omega$ with a common vertex $i$. Then $i$ 
is either a sink or a source of degree at least two. Note that both $X$ and $Y$ belong to both 
$\mathrm{supp}(B)$ and  $\mathrm{supp}(D)$. As the degree of $i$ is at least two and $B$
is indecomposable, we have $\mathtt{a}_{ii}\in B$ (for otherwise we may decompose $B$ using the
decomposition of $J_i$). Similarly, $\mathtt{a}_{ii}\in D$. Hence $\mathtt{a}_{ii}\in BD$ as well.
Using left multiplication, in case $i$ is a source, or right multiplication, in case $i$ is a sink,
it follows that $\mathrm{supp}(BD)$ contains both $X$ and $Y$. Consequently,
$\mathrm{supp}(BD)=\Omega$ as the latter one is connected.

Assume that we can write $BD=B^{(1)}\oplus B^{(2)}$, where $B^{(1)}$ and  $B^{(2)}$ are non-zero
subbimodules. As $\Omega$ is connected, there must be some vertex, say $i$, in the intersection of 
the supports of these two subbimodules. From the previous paragraph we have $\mathtt{a}_{ii}\in BD$.
Now, using arguments similar to the one used in the proof of Proposition~\ref{prop29},
we get a contradiction. This completes the proof.
\end{proof}

An immediate consequence of Proposition~\ref{prop32} is the following:

\begin{corollary}\label{cor33}
The multisemigroup $\mathcal{S}_{\ccP}$ is a monoid. 
\end{corollary}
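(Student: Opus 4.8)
The plan is to read the statement off directly from Proposition~\ref{prop32}, since all the essential work is already contained there. First I would recall from Subsection~\ref{s7.9} and Theorem~\ref{thm31} that the elements of $\mathcal{S}_{\ccP}$ are precisely the isomorphism classes of indecomposable subbimodules of ${}_AA_A$ together with the external zero, and that under the identification $\mathrm{Dp}_I\cong \mathrm{Su}_I(A)\otimes_A{}_-$ the multisemigroup operation $\star$ sends a pair $([B],[D])$ to the set of isomorphism classes of indecomposable direct summands of $\mathrm{Dp}_B\circ\mathrm{Dp}_D$. By Corollary~\ref{cor7} this composition is isomorphic to $\mathrm{Dp}_{BD}$, so that, as a bimodule, it is governed by $B\otimes_A D\cong BD$, the latter isomorphism being the first assertion of Proposition~\ref{prop32}.

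The key step is then to observe that single-valuedness of $\star$ is exactly the content of the second assertion of Proposition~\ref{prop32}. Indeed, $BD$ is either zero or indecomposable; in the first case $[B]\star[D]$ is the external zero, and in the second case it is the singleton $\{[BD]\}$. Hence $\star$ is single-valued, and we may define an honest binary operation on $\mathcal{S}_{\ccP}$ by letting $[B]\cdot[D]$ be this unique element (with the external zero absorbing). This upgrades the multisemigroup to an ordinary semigroup.

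It then remains to verify the semigroup axioms and the existence of a unit. Associativity is inherited from the associativity of horizontal composition of $1$-morphisms (equivalently, of multiplication of ideals), and the unit is provided by the identity $1$-morphism: this corresponds to the bimodule ${}_AA_A$, which is indecomposable because $Q$ is connected, so that $[{}_AA_A]\in\mathcal{S}_{\ccP}$, and it acts as a two-sided identity since $AB=BA=B$ for every ideal $B$. Combining these points shows that $\mathcal{S}_{\ccP}$ is a monoid.

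I do not expect any genuine obstacle here, as all the difficulty is concentrated in Proposition~\ref{prop32}, whose indecomposability statement is precisely what rules out a properly multivalued product. The only point worth stating carefully is the logical equivalence between a single-valued multisemigroup operation and an ordinary semigroup structure; once this is made explicit, the corollary follows immediately.
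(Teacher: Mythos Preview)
Your proposal is correct and matches the paper's approach: the paper gives no separate proof and simply records the corollary as an immediate consequence of Proposition~\ref{prop32}, which is exactly what you spell out. Your explicit treatment of single-valuedness, associativity, and the unit $[{}_AA_A]$ just unpacks what ``immediate consequence'' means here.
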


Another consequence from the proof of Proposition~\ref{prop32} is the following:

\begin{corollary}\label{cor33-1}
Let $B$ and $D$ be two indecomposable subbimodules in ${}_AA_A$ such that  $BD\neq 0$.
Then $\mathrm{supp}(BD)=\mathrm{supp}(B)\cap\mathrm{supp}(D)$.
\end{corollary}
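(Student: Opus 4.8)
The plan is to observe that this equality is already proved, almost verbatim, in the course of establishing Proposition~\ref{prop32}; I would simply isolate the two inclusions and streamline the bookkeeping. Throughout set $\Omega:=\mathrm{supp}(B)\cap\mathrm{supp}(D)$. The inclusion $\mathrm{supp}(BD)\subseteq\Omega$ is the easy half: since $B$ and $D$ are two-sided ideals we have $BD\subseteq B\cap D$, so every simple subbimodule in the socle of $BD$ is a simple subbimodule in the socle of both $B$ and $D$. By the description of the support in Subsection~\ref{s7.5} together with Lemma~\ref{nlem71}, the maximal chains contributing to the socle of $BD$ are therefore among those contributing to the socle of $B$ and among those contributing to the socle of $D$, which gives $\mathrm{supp}(BD)\subseteq\mathrm{supp}(B)$ and $\mathrm{supp}(BD)\subseteq\mathrm{supp}(D)$ at the same time.

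For the reverse inclusion I would first pin down the shape of $\Omega$. Each of $\mathrm{supp}(B)$ and $\mathrm{supp}(D)$ is a connected subtree by Lemma~\ref{nlem85} (or Proposition~\ref{prop26}), and the intersection of two subtrees of a tree is again a subtree, so $\Omega$ is connected. Moreover, each support is by definition a union of maximal chains, hence whenever an edge of $Q$ lies in $\Omega$ its entire maximal chain lies in $\mathrm{supp}(B)$ and in $\mathrm{supp}(D)$, and so in $\Omega$. Consequently $\Omega$ has no ``stray'' edges: it is either a single vertex or a union of maximal chains of $Q$. Since $BD\neq 0$ forces $\mathrm{supp}(BD)\neq\varnothing$, and $\mathrm{supp}(BD)\subseteq\Omega$ is a nonempty union of maximal chains, the single-vertex alternative is excluded; thus $\Omega$ is a nonempty connected union of maximal chains.

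It remains to show $\Omega\subseteq\mathrm{supp}(BD)$, and here I would import the propagation argument from the proof of Proposition~\ref{prop32}. If $X,Y\in\mathfrak{M}_Q$ are two distinct maximal chains contained in $\Omega$ with a common vertex $i$, then $i$ has degree at least two, so $i\in\mathbf{K}'(Q)$ is a sink or a source by admissibility of $Q$. Since both $X$ and $Y$ lie in $\mathrm{supp}(B)$, indecomposability of $B$ forces $\mathtt{a}_{ii}\in B$ --- otherwise $B$ would split along the decomposition \eqref{eq452} of $J_i$, compare Lemma~\ref{nlem401} --- and the same holds for $D$, whence $\mathtt{a}_{ii}=\mathtt{a}_{ii}\mathtt{a}_{ii}\in BD$. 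Left multiplication (if $i$ is a source) or right multiplication (if $i$ is a sink) by the arrows of $X$ and $Y$ then shows that both $X$ and $Y$ belong to $\mathrm{supp}(BD)$. Starting from any maximal chain already in the nonempty set $\mathrm{supp}(BD)$ and using connectedness of $\Omega$ to move from chain to chain across shared branch vertices, this propagates to every maximal chain of $\Omega$, giving $\Omega\subseteq\mathrm{supp}(BD)$ and hence equality. I expect the only genuinely load-bearing step to be the verification $\mathtt{a}_{ii}\in BD$ at the branch vertices, as this is exactly where the indecomposability of \emph{both} factors enters; the structural observation that $\Omega$ is a union of maximal chains (rather than a single vertex) is what lets the $BD\neq 0$ hypothesis kick in, and everything else is the bookkeeping already carried out inside the proof of Proposition~\ref{prop32}.
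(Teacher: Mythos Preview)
Your proposal is correct and follows exactly the approach intended by the paper: the corollary is stated there without a separate proof, simply as ``another consequence from the proof of Proposition~\ref{prop32}'', and you have faithfully extracted and organised that argument (the inclusion $\mathrm{supp}(BD)\subseteq\Omega$ from $BD\subseteq B\cap D$, the observation that $\Omega$ is a connected union of maximal chains, and the branch-vertex propagation via $\mathtt{a}_{ii}\in BD$). Your write-up is in fact slightly more careful than the paper's, for instance in explicitly ruling out the possibility that $\Omega$ is a single vertex using $BD\neq 0$.
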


We denote by $\mathcal{I}^{\mathrm{ind}}$ the submonoid of $\mathcal{I}$ consisting of indecomposable
subbimodules in ${}_AA_A$ and the zero bimodule. By the above, the 
monoids $\mathcal{S}_{\ccP}$ and $\mathcal{I}^{\mathrm{ind}}$ are isomorphic.

\begin{problem}\label{quest123}
{\rm
It would be interesting to know for which finite dimensional algebras the product of two indecomposable
subbimodules of the identity bimodule is always indecomposable or zero.
}
\end{problem}

\section{Presentation for $\mathcal{I}$ and $\mathcal{I}^{\mathrm{ind}}$}\label{s8}

The main aim of this section is to obtain presentations for both the monoid $\mathcal{I}$ and the monoid $\mathcal{I}^{\mathrm{ind}}$.

\subsection{Minimal generating systems}\label{s8.1}

Set
\begin{displaymath}
\mathbf{B}:=\{J_s\,:\,s\not\in\mathbf{K'}(Q)\}\cup
\bigcup_{s\in \mathbf{K'}(Q)}\{J_s^{(q)}\,:\,q=1,2,\dots,m_s\}.
\end{displaymath}
We will need the following technical observation.

\begin{lemma}\label{nnlem502}
For all $i,j\in Q_0$, we have 
\begin{equation}\label{eq71}
J_iP_j=
\begin{cases}
P_j,& i\neq j;\\
\mathrm{Rad}(P_i), & i=j.
\end{cases}
\end{equation}
Moreover, we also have $J_i\mathrm{Rad}(P_i)=\mathrm{Rad}(P_i)$.
\end{lemma}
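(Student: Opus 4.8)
The plan is to reduce everything to bookkeeping with the path basis, using Lemma~\ref{nlem72}. First I would pin down $J_i$ explicitly. By definition $J_i$ is the kernel of the composite ${}_AA_A\tto A/\mathrm{Rad}(A)=\bigoplus_{k}L_{kk}\tto L_{ii}$; every path of positive length lies in $\mathrm{Rad}(A)$ and so maps to $0$, while the trivial path $e_k=\mathtt{a}_{kk}$ maps to the generator of $L_{kk}$, which is nonzero in $L_{ii}$ only for $k=i$. Hence $J_i$ is the span of all basis paths $\mathtt{a}_{ts}$ with $(t,s)\neq(i,i)$, and by Lemma~\ref{nlem72} these form a basis. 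In a slogan, $J_i$ is obtained from ${}_AA_A$ by deleting the single trivial path $\mathtt{a}_{ii}=e_i$.

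Next I would compute the product combinatorially. Writing $P_j=Ae_j$ and using that $J_i$ is a right ideal (so $J_iA=J_i$), we get $J_iP_j=J_iAe_j=J_ie_j$. Right multiplication by $e_j$ annihilates each basis path $\mathtt{a}_{ts}$ with $s\neq j$ and fixes those with $s=j$, so $J_iP_j$ has basis $\{\mathtt{a}_{tj}\,:\,t\in\overline{j}\}$, from which $\mathtt{a}_{ii}$ is removed precisely when $j=i$. Comparing with the basis $\{\mathtt{a}_{tj}\,:\,t\in\overline{j}\}$ of $P_j$: for $i\neq j$ nothing is removed and $J_iP_j=P_j$, while for $i=j$ exactly the generator $e_i$ is deleted, leaving the span of all positive-length paths starting at $i$, which is $\mathrm{Rad}(P_i)$. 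This establishes~\eqref{eq71}.

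For the ``moreover'' claim the cleanest route is to note that $J_i$ is idempotent, $J_i^2=J_i$: any basis path $\mathtt{a}_{ts}\neq e_i$ has source or target distinct from $i$, so one of the idempotents $e_s$ or $e_t$ lies in $J_i$ and splits $\mathtt{a}_{ts}$ as a product of two basis elements of $J_i$ (this is the same splitting used in the proof of Lemma~\ref{lem41}). Granting this together with the case $i=j$ of~\eqref{eq71}, namely $\mathrm{Rad}(P_i)=J_iP_i$, associativity yields $J_i\mathrm{Rad}(P_i)=J_i(J_iP_i)=J_i^2P_i=J_iP_i=\mathrm{Rad}(P_i)$. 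Alternatively one argues directly: the inclusion $J_i\mathrm{Rad}(P_i)\subseteq\mathrm{Rad}(P_i)$ is clear, and conversely each basis element $\mathtt{a}_{pi}$ of $\mathrm{Rad}(P_i)$ factors as $\mathtt{a}_{pt}\mathtt{a}_{ti}$, where $t$ is the neighbour of $i$ on the unique path from $i$ to $p$; here $\mathtt{a}_{ti}\in\mathrm{Rad}(P_i)$ and $\mathtt{a}_{pt}\in J_i$ because $t\neq i$, whence $\mathtt{a}_{pi}\in J_i\mathrm{Rad}(P_i)$.

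I do not anticipate a serious obstacle; once the path basis of $J_i$ is fixed, the statement is essentially a counting argument. The only points demanding care are the diagonal case $i=j$ of~\eqref{eq71} — verifying that removing $e_i$ from $P_i$ yields exactly $\mathrm{Rad}(P_i)$ rather than a proper submodule — and the analogous care in the ``moreover'' part, where one must check that the factorisation never forces a trivial path at $i$ into a factor, which would push it outside $J_i$ and cause a loss of the radical.
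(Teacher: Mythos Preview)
Your argument is correct. The case $i\neq j$ is handled exactly as in the paper (both observe that $e_j=\mathtt{a}_{jj}\in J_i$), but for $i=j$ and the ``moreover'' part the mechanisms diverge slightly. The paper invokes the hereditary property: since $\mathrm{Rad}(P_i)$ decomposes as a direct sum of various $P_k$ with $k\neq i$, the already-established case $J_iP_k=P_k$ immediately gives $J_i\mathrm{Rad}(P_i)=\mathrm{Rad}(P_i)$, and then $J_iP_i=\mathrm{Rad}(P_i)$ follows from $J_iP_i\subsetneq P_i$ together with maximality of the radical. You instead stay entirely at the level of the path basis, reading off $J_ie_i$ as the span of positive-length paths with source $i$, and for the ``moreover'' you either prove $J_i^2=J_i$ directly or factor each basis element through a neighbour of $i$. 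Your route avoids appealing to heredity and is arguably more self-contained for this particular lemma; the paper's route is shorter once heredity is on the table and has the virtue of deriving both equalities in the $i=j$ case from the single observation about $\mathrm{Rad}(P_i)$.
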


\begin{proof}
If $i\neq j$, then $J_i$ contains $\mathtt{a}_{jj}$ by definition and thus $J_iP_j=P_j$.
If $i=j$, then $J_i$ does not contain $\mathtt{a}_{ii}$ by definition and thus $J_iP_i\neq P_i$.
However, since $A$ is hereditary, $\mathrm{Rad}(P_i)$ is a direct sum of some $P_k$, where
$k\neq i$. Therefore from the first sentence of the proof we have both $J_iP_i=\mathrm{Rad}(P_i)$
and $J_i\mathrm{Rad}(P_i)=\mathrm{Rad}(P_i)$.
\end{proof}

\begin{proposition}\label{prop41}
{\hspace{2mm}}

\begin{enumerate}[$($i$)$]
\item\label{prop41.1} The set $\mathbf{Q}$ is the unique minimal 
generating system for the monoid $\mathcal{I}$.
\item\label{prop41.2} 
The set $\mathbf{B}$ is the unique minimal generating system for the monoid $\mathcal{I}^{\mathrm{ind}}$.
\end{enumerate}
\end{proposition}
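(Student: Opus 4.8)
The plan is to reduce both parts to one elementary fact about finite monoids and then supply the combinatorial input. Call an element $m$ of a monoid $M$ \emph{indispensable} if $m\notin\langle M\setminus\{m\}\rangle$. Every generating set must contain each indispensable element (otherwise it would be contained in $M\setminus\{m\}$ and so could not generate $m$), so if the indispensable elements happen to generate $M$ they automatically form the unique minimal generating system. Both $\mathcal{I}$ and $\mathcal{I}^{\mathrm{ind}}$ are finite and \emph{contracting} for the inclusion order, meaning $IJ\subseteq I\cap J$, with the identity ${}_AA_A$ as the unique maximal element. For each part it therefore suffices to prove (a) that the proposed set consists of indispensable elements and (b) that it generates the monoid; minimality and uniqueness then follow formally.

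For the indispensability in (i), recall that $\mathbf{Q}$ is exactly the set of maximal proper ideals, as observed before Lemma~\ref{lem41}. If a co-atom $J_s$ were a product $I_1\cdots I_r$ of ideals each different from $J_s$, then by contraction every factor would contain $J_s$; since the only ideals containing the maximal proper ideal $J_s$ are $J_s$ and ${}_AA_A$, every factor would equal ${}_AA_A$, forcing the product to be ${}_AA_A\neq J_s$. For (ii) I would use Lemma~\ref{lem41} to identify each member of $\mathbf{B}$ either as a maximal proper element of $\mathbf{B}(Q)$ (the $J_s$ with $s\notin\mathbf{K}'(Q)$) or, via \eqref{eqeq12} together with Lemma~\ref{supportlemma}, as the unique maximal element $B_\Omega$ of its own support class $\mathbf{B}(\Omega)$ with $\Omega\subsetneq Q$ (the $J_s^{(q)}$). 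Suppose such a generator equals a product $B_1\cdots B_r$ of indecomposables each different from it. By Corollary~\ref{cor33-1} the intersection of the supports equals the support $\Omega$ of the generator, while contraction gives $\mathrm{supp}(B_i)\supseteq\Omega$ for every $i$. The geometric input is that $\Omega$ is meet-irreducible in the poset $\mathbf{W}$ of special subtrees: by Lemma~\ref{supportlemma} a branch vertex in $\mathbf{K}'(Q)$ has degree either $1$ or $\deg_Q$ inside any special subtree, so two special subtrees strictly larger than $\Omega$ always share a vertex outside $\Omega$. Hence some $B_i$ must have support exactly $\Omega$, and maximality forces $B_i$ to equal the generator, a contradiction.

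Generation is the substantive part. For (i) I would induct on $\dim_{\Bbbk}(A/I)$. If $I\subsetneq A$ then, since the $\mathtt{a}_{kj}$ form a basis adapted to $I$ (Lemma~\ref{nlem72}), some $e_s\notin I$, whence $I\subseteq J_s$. The engine is Lemma~\ref{nnlem502}, or rather the resulting product rule: writing $S_j(I)=\{k:\mathtt{a}_{kj}\in I\}$ one checks $S_j(IJ)=\bigcup_{k\in S_j(J)}S_k(I)$, so left multiplication by $J_s$ deletes the vertex $s$ from every column in which it is currently a top. Choosing $s$ and the side of multiplication according to the orientation of $Q$, one peels off a single layer and writes $I=J_s\cdot I'$ (or $I'\cdot J_s$) with $I'\supsetneq I$; the inductive hypothesis then expresses $I'$, and hence $I$, as a product of elements of $\mathbf{Q}$. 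For (ii) the maximal elements $B_\Omega$ are already exhibited as products of $\mathbf{B}$ by \eqref{eqeq12}; for an arbitrary indecomposable $B$ with support $\Omega$ one has $B\subseteq B_\Omega$, and one peels from $B_\Omega$ down to $B$ by the same mechanism, using Proposition~\ref{prop32} and Corollary~\ref{cor33-1} to guarantee that every partial product stays indecomposable with support exactly $\Omega$.

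The main obstacle will be this peeling step, specifically controlling it so that the ordered product reproduces $I$ exactly rather than some strictly smaller ideal: because multiplication by $J_s$ acts simultaneously on all columns and removes $s$ wherever it is a top, the sequence of indices and the multiplication side must be chosen so as to delete precisely the missing basis vectors of $I$ and nothing more. I expect to handle this by processing the vertices of $Q$ along a linear extension of the successor order while tracking the columns $S_j$ explicitly, with the relation-composition description above making the bookkeeping systematic. For (ii) there is the additional subtlety of never leaving the indecomposable subbimodules during the peeling, which is exactly what Proposition~\ref{prop32} and the meet-irreducibility of supports are there to ensure.
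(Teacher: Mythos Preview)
Your overall strategy matches the paper's: show that the proposed generators are indispensable and then prove they generate. The indispensability arguments are essentially equivalent to the paper's, your meet-irreducibility of $\Omega=\mathrm{supp}(J_s^{(q)})$ being a reformulation of the paper's observation that any special subtree strictly containing $\Omega$ must have $s$ of full degree $\deg_Q(s)$. One small gap: for $J_s$ with $s\notin\mathbf{K}'(Q)$, your clause ``maximality forces $B_i$ to equal the generator'' does not conclude, since $\mathbf{B}(Q)\setminus\{A\}$ has several maximal elements; however, these $J_s$ are co-atoms of $\mathcal{I}$ lying in $\mathcal{I}^{\mathrm{ind}}$, so your co-atom argument from part~(i) applies to them verbatim.

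Where you diverge most is generation in (ii). You propose a second peeling argument inside $\mathcal{I}^{\mathrm{ind}}$, using Proposition~\ref{prop32} and Corollary~\ref{cor33-1} to keep partial products indecomposable. The paper bypasses this entirely: once (i) is established, any $B\in\mathcal{I}^{\mathrm{ind}}$ is a product $J_{i_1}\cdots J_{i_k}$ in $\mathcal{I}$; expanding each decomposable $J_{i_j}$ via \eqref{eq452} and distributing, the product becomes a direct sum of products of elements of $\mathbf{B}$, and indecomposability of $B$ forces it to coincide with one such summand. So your extra peeling in (ii) is unnecessary.

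For generation in (i), your peeling idea \emph{is} the paper's: find $B'\supsetneq B$ with $B=J_sB'$ and induct. You correctly flag this as the main obstacle. The paper resolves it not by a global linear-extension sweep but by a local case analysis: choose $i$ extremal with $B_i\neq P_i$, locate the single missing basis vector $\mathtt{a}_{si}$ whose adjunction to $B$ still yields a subbimodule $B'$, and verify $J_sB'=B$ via Lemma~\ref{nnlem502}. Only left multiplication is used. Your relation-composition formula $S_j(IJ)=\bigcup_{k\in S_j(J)}S_k(I)$ is correct and packages the same computation, but the substantive work is exhibiting $B'$, which is precisely the case split the paper carries out (and which, in one subcase, even requires an auxiliary induction on the number of vertices).
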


\begin{proof}[Proof of claim~\eqref{prop41.1}.]
As $\mathbf{Q}$ is the set of all maximal elements in $\mathcal{I}\setminus\{{}_AA_A\}$, it must belong
to any generating system. Therefore, to prove claim~\eqref{prop41.1} it is enough to show that 
$\mathbf{Q}$ generates $\mathcal{I}$. Let $S$ be the submonoid of $\mathcal{I}$ generated by $\mathbf{Q}$.
Assume that $\mathcal{I}\setminus S\neq\varnothing$ and let $B$ be a maximal element in 
$\mathcal{I}\setminus S$ with respect to inclusions. Certainly, $B\neq 0$, $B\neq {}_AA_A$
and $B\neq J_s$ for $s\in Q_0$.

We split the proof of claim~\eqref{prop41.1} into two cases.

{\bf Case~1.} Assume first that $B_i\in\{0,P_i\}$ for all $i$. 
As $B\neq {}_AA_A$, there is at least one $i$ such that
$B_i=0$. As $B\neq 0$, there is at least one $j$ such that $B_j=P_j$.  As $B$ is a bimodule, $B_i=0$ 
implies $B_s=0$ for all $s\in \overline{i}$. Therefore we may choose $i$ such that $B_i=0$
and for any arrow $j\to i$, where  $j\in Q_0$, we have $B_j=P_j$. We have to consider two subcases.

{\bf Subcase~1.A.} Assume that $i$ is not a source. Then
there is some $j$ with an arrow $j\to i$. In
particular, $P_i$ has simple socle, say $L_s$. Then $s$ is a sink and $B_s=0$. 

\begin{lemma}\label{nnlem501}
With the notation above, the space  $B':=B\oplus\Bbbk\{\mathtt{a}_{si}\}$ is a subbimodule of ${}_AA_A$.
\end{lemma}

\begin{proof}
By construction, $\Bbbk\{\mathtt{a}_{si}\}$ is the socle of $P_i$. Therefore $B'$ is a left module.
For any $j'\in Q_0$ such that there is an arrow $\alpha:j'\to i$, we have $B_{j'}=P_{j'}$ by our choice
of $i$. Therefore, right multiplication with $\alpha$ maps $\mathtt{a}_{si}$ to $\mathtt{a}_{sj'}\in B$.
The claim follows.
\end{proof}

Using \eqref{eq71} and $B_s=0$, we get $B=J_sB'$. As $B\subsetneq B'$, we
have $B'\in S$ by maximality of $B$. Therefore $B\in S$, a contradiction.

{\bf Subcase~1.B.} Assume that $i$ is a source and let $X$ be a maximal chain starting at $i$
and ending at some $s\in Q_0$. Then $s\neq i$ is a sink and $B_s=0$. 

\begin{lemma}\label{nnlem5015}
With the notation above, the space  $B'=B\oplus\Bbbk\{\mathtt{a}_{si}\}$ is a subbimodule of ${}_AA_A$.
\end{lemma}

\begin{proof}
By construction, $\Bbbk\{\mathtt{a}_{si}\}$ is a socle element of $P_i$. Therefore $B'$ is a left module.
As $i$ is a source, right multiplication with $\mathtt{a}_{ab}$ either preserves $\mathtt{a}_{si}$,
if $a=b=i$, or annihilates it in all other cases. The claim follows.
\end{proof}

Using \eqref{eq71} and $B_s=0$, we get $B=J_sB'$. As $B\subsetneq B'$, we
have $B'\in S$ by maximality of $B$. Therefore $B\in S$, a contradiction.
This completes the proof of Case~1.

{\bf Case~2.}
Now we may assume that there is an $i$ such that $B_i\not\in\{0,P_i\}$. In this case we may choose
$i\in\{1,2,\dots,n\}$ such that $B_i\not\in\{0,P_i\}$ and, additionally, for any $j$ for which 
there is an arrow $j\to i$ we have $B_j=P_j$ (note that such $B_j$ is automatically non-zero as $B_i\neq 0$
and $B$ is a subbimodule).  We again consider two subcases.

{\bf Subcase~2.A.}
Assume that we may choose such $i$ with the additional property that it is not a source. 
Then there is a unique $s\in\overline{i}$ such that $\mathrm{Rad}(P_s)=B_i$. 
Then $s$ is not a sink since $B_i\neq 0$. If $t\in \overline{i}$, then $B_t$ cannot have 
$P_s$ as a direct summand as $B_i\subset \mathrm{Rad}(P_s)$. If $t\not\in \overline{i}$, then 
$B_t$ cannot have $P_s$ as a direct summand as $B_j=P_j$ for all $j$ which have an arrow to $i$. 
Therefore, similarly to the above, $B'=B\oplus\Bbbk\{\mathtt{a}_{si}\}$ is 
a strictly larger  subbimodule than $B$ and $B=J_sB'$, leading again to a contradiction.

{\bf Subcase~2.B.}
Finally, consider the subcase when any $i$ as above in Case~2 is a source. In this case we have 
$J_iM=M$ for any left submodule $M\subset J_i$
by \eqref{eq71}. If $\mathrm{deg}_Q(i)=1$, then $P_i$ is uniserial and
there is a unique $s\in\overline{i}$ such that the radical of $P_s$ is isomorphic to $B_i$. Then
$s$ is not a sink (since $B_i\neq 0$) and hence,
similarly to the above,  $B':=B\oplus\Bbbk\{\mathtt{a}_{si}\}$ is a subbimodule of
${}_AA_A$ and $B=J_sB'$ by \eqref{eq71}, a contradiction. 

It is left to assume $\mathrm{deg}_Q(i)>1$.
Then $J_i$ decomposes according to \eqref{eq452}. For $q=1,2,\dots,m_i$, set
\begin{displaymath}
B^{(q)} := B\cap J_i^{(q)}
\end{displaymath}
and denote by $t_q\in\Gamma^{(q)}$ the unique element such that there is an arrow $\alpha_q:i\to t_q$.
If $B^{(q)}_{i}=P_{t_q}$ for all $q$, we have $B_i=\mathrm{Rad}(P_i)$. Then, similarly to the above,
$B':=B\oplus\Bbbk\{\mathtt{a}_{ii}\}$ is a subbimodule of
${}_AA_A$ and $B=J_iB'$ by \eqref{eq71}, a contradiction. 

If $B^{(q)}_{i}\neq P_{t_q}$ for some $q$,  we note that $P_{t_q}$ is uniserial since $Q$
is admissible. There is a unique $s$ such that $\mathrm{Rad}(P_s)=B^{(q)}_{i}$.
Then, similarly to the above,
$B':=B\oplus\Bbbk\{\mathtt{a}_{si}\}$ is a subbimodule of
${}_AA_A$. If $s$ is not a sink, we also have $B=J_sB'$ by \eqref{eq71}, a contradiction.

If $s$ is a sink, we have $B^{(q)}_{i}=0$ and thus also $B^{(q)}_{s}=0$
and even $B^{(q)}_{t}=0$ for all $t$ in the maximal chain $X$ with source $i$ and sink $s$. In this case, 
$B^{(q)}$ is, naturally, a proper subbimodule of the identity bimodule for the 
path algebra $A_q$ of $\Gamma^{(q)}$. Note that $\Gamma^{(q)}$ has less than $n$ vertices.
Using induction by $n$ and the above arguments, we get that there are $x,y\in \Gamma^{(q)}$
such that $(B^{(q)})':=B^{(q)}\oplus\Bbbk\{\mathtt{a}_{xy}\}$ is an 
$A_q$-$A_q$--bimodule and $J^{(q)}_x (B^{(q)})'=B^{(q)}$, where $J^{(q)}_x$ denotes the 
kernel of the bimodule epimorphism $A_q\tto L_{xx}$. If $y\not\in X$,
then  $B':=B\oplus\Bbbk\{\mathtt{a}_{xy}\}$ is a subbimodule of
${}_AA_A$ and $B=J_xB'$ by the above, a contradiction. If $y\in X$,
then $x=s$ and $B':=B\oplus\Bbbk\{\mathtt{a}_{si}\}$ is a subbimodule of
${}_AA_A$ and $B=J_sB'$ by the above, a contradiction. 
Claim~\eqref{prop41.1} follows.
\end{proof}

\begin{proof}[Proof of claim~\eqref{prop41.2}.]
The fact that  $\mathbf{B}$ generates $\mathcal{I}^{\mathrm{ind}}$ follows from claim~\eqref{prop41.1} and
Proposition~\ref{prop32} since $\mathbf{B}$ is exactly the set of indecomposable summands of elements in $\mathbf{Q}$.

To prove minimality of $\mathbf{B}$, assume that we can write some $J_s^{(q)}$ as a 
product of other elements in $\mathbf{B}$. By Corollary~\ref{cor33-1}, we thus get that 
$\mathrm{supp}(J_s^{(q)})$ is the intersection of the supports of the factors in this product.
For each factor $D$ of this product, the degree of $s$ in $\mathrm{supp}(D)$ is either $1$
or equal to $\deg_Q(s)$. If none of the  factors has $s$ with degree $1$, then all factors
have it with degree $\deg_Q(s)$ and thus the degree of $s$ in the support of the product must 
be $\deg_Q(s)$ as well, a contradiction. Therefore the product contains at least one factor
$D$ for which $\deg_{\mathrm{supp}(D)}(s)=1$. This means that $D=J_s^{(q')}$ for some $q'$.
As $\mathrm{supp}(D)\supset \mathrm{supp}(J_s^{(q)})$, it follows that $q=q'$, a contradiction.
Minimality of $\mathbf{B}$ follows.

The argument from the previous paragraph even shows that, if
we write $J_s^{(q)}$ as a product of arbitrary elements in $\mathcal{I}^{\mathrm{ind}}$, then
one of the factors must be contained in  $J_s^{(q)}$. If the factor is properly contained in
$J_s^{(q)}$, then the whole product is properly contained in $J_s^{(q)}$ as well.
Therefore one of the factors equals $J_s^{(q)}$, which implies uniqueness of $\mathbf{B}$. 
This completes the proof of claim~\eqref{prop41.2}.
\end{proof}

\subsection{Relations}\label{s8.2}

\begin{proposition}\label{prop51}
The ideals $J_i$, $i=1,2,\dots,n$, satisfy the following relations:
\begin{enumerate}[$($a$)$]
\item\label{prop51.1} $J_i^2=J_i$ for all $i$.
\item\label{prop51.2} $J_iJ_j=J_jJ_i$ if there is no arrow between $i$ and $j$.
\item\label{prop51.3} $J_{j}J_iJ_j=J_iJ_jJ_i=J_jJ_i$ if there is an arrow $i\to j$.
\end{enumerate}
\end{proposition}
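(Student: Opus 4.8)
The plan is to verify all three relations by a single mechanism: computing the products of ideals one column at a time. Recall from Lemma~\ref{nlem72} (and the proof of Lemma~\ref{lem10}) that a subbimodule $B\subseteq {}_AA_A$ is completely determined by the set of basis paths $\mathtt{a}_{ts}$ it contains, equivalently by its columns $B_s:=Be_s\subseteq P_s$. For any two two-sided ideals $I,K$ one has $(IK)e_s=I(Ke_s)$, so the columns of a product are obtained by applying the left factor to the columns of the right factor. Since $A$ is hereditary, every column $K_s$ is a projective left module, and Lemma~\ref{nnlem502} reduces the whole computation to one rule: left multiplication by $J_k$ replaces every indecomposable summand isomorphic to $P_k$ in a projective column by its radical, and fixes every other summand. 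I will also use throughout that $(J_k)_s=P_s$ for $s\neq k$, that $(J_k)_k=\mathrm{Rad}(P_k)$, and that $\mathrm{Rad}(P_i)=\bigoplus_{i\to c}P_c$ because $A$ is the path algebra of a tree.

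With this in place, relation~\eqref{prop51.1} is immediate: applying $J_i$ to the columns of $J_i$ fixes each $P_s$ for $s\neq i$, and fixes $\mathrm{Rad}(P_i)$ by the ``moreover'' part of Lemma~\ref{nnlem502}, so $J_i^2=J_i$. For relation~\eqref{prop51.2} I would compute $J_iJ_j$ and $J_jJ_i$ column by column. The key point is that when there is no arrow between $i$ and $j$, the vertex $j$ is not a target of any arrow from $i$ and vice versa, so $J_j$ leaves $\mathrm{Rad}(P_i)$ untouched and $J_i$ leaves $\mathrm{Rad}(P_j)$ untouched. Both products then turn out to be the subbimodule of $A$ obtained by deleting exactly the two paths $\mathtt{a}_{ii}$ and $\mathtt{a}_{jj}$, whence $J_iJ_j=J_jJ_i$.

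The substantive case is relation~\eqref{prop51.3}, and this is where the bookkeeping is the main obstacle. With an arrow $i\to j$ the situation is no longer symmetric: the summand $P_j\subseteq\mathrm{Rad}(P_i)$, generated by the arrow $\mathtt{a}_{ji}$ sitting in column $i$, is now present, while $i$ is still a target of no arrow from $j$. A careful column computation shows that $J_jJ_i$ is the subbimodule obtained from $A$ by deleting precisely the three paths $\mathtt{a}_{ii}$, $\mathtt{a}_{jj}$ and $\mathtt{a}_{ji}$. Multiplying $J_iJ_j$ on the left by $J_j$ deletes the same three paths, the extra deletion of $\mathtt{a}_{ji}$ arising exactly because $J_j$ now meets the $P_j$-top inside $\mathrm{Rad}(P_i)$; this gives $J_jJ_iJ_j=J_jJ_i$. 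Multiplying $J_jJ_i$ on the left by $J_i$ changes nothing further, since no column of $J_jJ_i$ any longer contains a summand isomorphic to $P_i$, which yields $J_iJ_jJ_i=J_jJ_i$ as well.

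The computations are routine once the column principle is established; the only delicate step is checking, in each triple product, which basis paths $\mathtt{a}_{ts}$ survive in column $i$, and in particular verifying that the deletion of $\mathtt{a}_{ji}$ happens exactly once and that no spurious further cancellation occurs when one multiplies a second time by $J_i$ or $J_j$. I expect this precise tracking of surviving paths in column $i$ to be essentially the whole difficulty, and once it is pinned down the three identities read off directly.
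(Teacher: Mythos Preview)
Your proposal is correct and follows essentially the same approach as the paper: both arguments reduce the ideal identities to a column-by-column computation via Lemma~\ref{nnlem502}, checking that $(J_iJ_j)_s=(J_jJ_i)_s$ trivially for $s\notin\{i,j\}$ and then analysing the columns at $s=i$ and $s=j$ using the decomposition $\mathrm{Rad}(P_i)=P_j\oplus N$ in part~\eqref{prop51.3}. Your phrasing in terms of ``deleting the basis paths $\mathtt{a}_{ii},\mathtt{a}_{jj},\mathtt{a}_{ji}$'' is just a concrete restatement of the paper's computation $(J_jJ_i)_i=\mathrm{Rad}(P_j)\oplus N$.
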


\begin{proof}
To prove claim~\eqref{prop51.1}, note that $P_i$ is not a direct summand of $J_i$ since $A$
is hereditary and finite dimensional. Therefore $J_i^2=J_i$ for all $i$ follows directly from 
Lemma~\ref{nnlem502}.

To prove claim~\eqref{prop51.2}, note that $(J_iJ_j)_s=(J_jJ_i)_s$ for all $s\in Q_0\setminus\{i,j\}$.
To compare $(J_iJ_j)_s$ with $(J_jJ_i)_s$ for $s\in \{i,j\}$, we observe that, in case there are no 
arrows between $i$ and $j$, $P_i$ is not isomorphic to a summand of $(J_j)_j$ and 
$P_j$ is not isomorphic to a summand of $(J_i)_i$. Therefore, Lemma~\ref{nnlem502} implies
\begin{displaymath}
(J_iJ_j)_i=(J_jJ_i)_i=\mathrm{Rad}(P_i)\quad \text{ and }\quad 
(J_iJ_j)_j=(J_jJ_i)_j=\mathrm{Rad}(P_j).
\end{displaymath}
This proves claim~\eqref{prop51.2}.

Similarly, to prove claim~\eqref{prop51.3}, it is enough to compare 
$(J_jJ_iJ_j)_s$ with $(J_iJ_jJ_i)_s$ for $s\in \{i,j\}$. From Lemma~\ref{nnlem502}, it follows that
\begin{displaymath}
(J_jJ_iJ_j)_j=(J_iJ_jJ_i)_j=(J_jJ_i)_j=\mathrm{Rad}(P_j)
\end{displaymath}
since there are no arrows from $j$ to $i$ or from $j$ to $j$. 
For $s=i$, we can write $\mathrm{Rad}(P_i)=P_j\oplus N$,
where $N$ contains no direct summand isomorphic to $P_i$ or $P_j$.
Therefore Lemma~\ref{nnlem502} implies that
\begin{displaymath}
(J_jJ_iJ_j)_i=(J_iJ_jJ_i)_i=(J_jJ_i)_i=\mathrm{Rad}(P_j)\oplus N.
\end{displaymath}
This completes the proof.
\end{proof}

\begin{proposition}\label{prop52}
For $i,j\not\in\mathbf{K}'(Q)$, $s,t\in\mathbf{K}'(Q)$, $q\in\{1,2,\dots,m_s\}$ and $p\in\{1,2,\dots,m_t\}$, the
elements of $\mathbf{B}$ satisfy the following:
\begin{enumerate}[$($a$)$]
\item\label{prop52.1} Relations from Proposition~\ref{prop51}\eqref{prop51.1}-\eqref{prop51.3}
for $i,j\not\in \mathbf{K}'(Q)$.
\item\label{prop52.2} $(J_s^{(q)})^2=J_s^{(q)}$.
\item\label{prop52.3} $J_s^{(q)}J_s^{(q')}=0$ for any $q'\in \{1,2,\dots,m_s\}$, $q'\neq q$.
\item\label{prop52.4} $J_s^{(q)}J_t^{(p)}=J_t^{(p)}J_s^{(q)}$ if there is no arrow between $s$ and $t$.
\item\label{prop52.45} $J_s^{(q)}J_i=J_iJ_s^{(q)}$ if there is no arrow between $s$ and $i$.
\item\label{prop52.5} $J_{t}^{(p)}J_s^{(q)}J_t^{(p)}=J_s^{(q)}J_t^{(p)}J_s^{(q)}=
J_t^{(p)}J_s^{(q)}$  if there is an arrow $s\to t$.
\item\label{prop52.55} $J_{t}^{(p)}J_iJ_t^{(p)}=J_iJ_t^{(p)}J_i=
J_t^{(p)}J_i$  if there is an arrow $i\to t$.
\item\label{prop52.56} $J_{t}^{(p)}J_iJ_t^{(p)}=J_iJ_t^{(p)}J_i=
J_iJ_t^{(p)}$ if there is an arrow $t\to i$.
\item\label{prop52.6} $J_{t}^{(p)}J_s^{(q)}J_t^{(p')}=0$ for any $p'\in \{1,2,\dots,m_t\}$, $p'\neq p$,
if there is an arrow between $s$ and $t$.
\item\label{prop52.65} $J_{t}^{(p)}J_iJ_t^{(p')}=0$ for any $p'\in \{1,2,\dots,m_t\}$, $p'\neq p$,
if there is an arrow between $t$ and $i$.
\item\label{prop52.9} $J_s^{(q)}J_t^{(p)}=J_{t}^{(p)}$ in case $\mathrm{supp}(J_t^{(p)})\subset 
\mathrm{supp}(J_s^{(q)})$.
\item\label{prop52.95} $J_s^{(q)}J_t^{(p)}=0$ in case $\mathrm{supp}(J_t^{(p)})\cap  
\mathrm{supp}(J_s^{(q)})=\varnothing$.
\item\label{prop52.97} $J_s^{(q)}J_i=J_iJ_s^{(q)}=J_s^{(q)}$ if $i\not\in\mathrm{supp}(J_s^{(q)})$.
\end{enumerate}
\end{proposition}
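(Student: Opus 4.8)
The plan is to reduce every relation to a statement about composition of the associated special functions. The first thing I would establish is a \emph{column formula} for products, generalising the computation of Lemma~\ref{nnlem502}: for any two indecomposable subbimodules $B,D\subseteq{}_AA_A$ and any $i\in Q_0$ one has $(BD)e_i=B(De_i)$, and since $De_i\cong P_{\mathbf{x}_D(i)}$ is generated by the single path $\mathtt{a}_{\mathbf{x}_D(i),i}$ while $BA=B$, right multiplication by that path identifies $(BD)e_i$ with $Be_{\mathbf{x}_D(i)}\cong P_{\mathbf{x}_B(\mathbf{x}_D(i))}$. Together with Lemma~\ref{nlem401}, Proposition~\ref{prop32} (which guarantee that $BD$ is again indecomposable or zero) and Theorem~\ref{thm31}, this yields
\[
\mathbf{x}_{BD}=\mathbf{x}_B\circ\mathbf{x}_D
\]
as path functions, with the convention $\mathbf{x}_{\bullet}(0)=0$. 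Since every factor occurring in Proposition~\ref{prop52} lies in $\mathbf{B}$ and is therefore indecomposable, this formula applies throughout, so each relation becomes either an identity between explicit composites of special functions or a vanishing assertion.

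For the vanishing relations~\eqref{prop52.3}, \eqref{prop52.6}, \eqref{prop52.65} and \eqref{prop52.95} I would argue uniformly through supports. By Corollary~\ref{cor33-1}, applied iteratively, the support of any nonzero product is the intersection of the supports of its factors. Now $\mathrm{supp}(J_s^{(q)})$ and $\mathrm{supp}(J_s^{(q')})$ for $q\neq q'$ meet only in the branch vertex $s$ and hence share no maximal chain; the same holds for $J_t^{(p)}$ and $J_t^{(p')}$ with $p\neq p'$, and for the disjoint supports in~\eqref{prop52.95}. Since by Lemma~\ref{nlem71} a nonzero subbimodule has a socle indexed by the maximal chains in its support, any product whose factor-supports intersect in no full maximal chain must vanish. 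The idempotency and orthogonality relations~\eqref{prop52.2} and~\eqref{prop52.3} then drop out by combining this vanishing with the decomposition $J_s=\bigoplus_q J_s^{(q)}$ of~\eqref{eq452} and the identity $J_s^2=J_s$ from Proposition~\ref{prop51}\eqref{prop51.1}.

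The remaining relations are identities of composite special functions. Relation~\eqref{prop52.1} is Proposition~\ref{prop51} read verbatim, since for $i,j\notin\mathbf{K}'(Q)$ the elements of $\mathbf{B}$ are the ideals $J_i$ themselves. For the commutations~\eqref{prop52.4} and~\eqref{prop52.45} I would note that each of $\mathbf{x}_{J_s^{(q)}}$ and $\mathbf{x}_{J_t^{(p)}}$ (respectively $\mathbf{x}_{J_i}$) moves only its own branch vertex to a neighbour and fixes everything outside its support; when the two branch vertices are non-adjacent these moves involve disjoint vertices, so the two composites agree. For the braid relations~\eqref{prop52.5}, \eqref{prop52.55} and~\eqref{prop52.56} I would restrict attention to the maximal chain carrying the arrow $s\to t$ (respectively $i\to t$): there the composites collapse to the $A_2$-computation already recorded in Proposition~\ref{prop51}\eqref{prop51.3}, while monotonicity of the special functions guarantees that the action off this chain is unaffected. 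Finally, the absorption relations~\eqref{prop52.9} and~\eqref{prop52.97} follow because monotonicity forces $\mathbf{x}_{J_s^{(q)}}$ to act as the identity on the image of the inner function, respectively forces $J_i$ to leave all columns of $J_s^{(q)}$ untouched when $i\notin\mathrm{supp}(J_s^{(q)})$, using that $\mathbf{x}_{J_s^{(q)}}(i')=0$ for every successor $i'$ of such an $i$.

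The main obstacle is the bookkeeping between the global relations of Proposition~\ref{prop51}, which concern the possibly decomposable ideals $J_i$, and the local decomposition $J_s=\bigoplus_q J_s^{(q)}$ into indecomposable summands. One must keep track of which connected component of $Q\setminus\{s\}$ (respectively $Q\setminus\{t\}$) each factor and each partial product occupies, and confirm that multiplications across distinct components die while those along a shared maximal chain reproduce the $A_2$ braid pattern. The support-intersection formula of Corollary~\ref{cor33-1} is what makes this manageable: it disposes of all cross-component products in one stroke, leaving only single-chain computations already handled by Proposition~\ref{prop51}. The fiddliest point will be matching the component indices $p,p',q$ in the threefold products~\eqref{prop52.5}, \eqref{prop52.6} and~\eqref{prop52.65}, where one must verify that the surviving support is exactly the chain dictated by the arrow between $s$ and $t$.
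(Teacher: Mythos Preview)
Your approach is correct and genuinely different from the paper's. The paper argues almost entirely by \emph{decomposing the global relations} of Proposition~\ref{prop51}: it expands each side of an identity such as $J_sJ_t=J_tJ_s$ or $J_tJ_sJ_t=J_sJ_tJ_s=J_tJ_s$ along the direct sum $J_s=\bigoplus_q J_s^{(q)}$, and then matches indecomposable summands on the two sides by comparing supports (implicitly via Corollary~\ref{cor33-1}). Only the absorption relations \eqref{prop52.9}, \eqref{prop52.95}, \eqref{prop52.97} are checked directly, by computing $J_s^{(q)}J_t^{(p)}P_r$ column by column with the analogue of \eqref{eq71}. Your route instead establishes the column formula $\mathbf{x}_{BD}=\mathbf{x}_B\circ\mathbf{x}_D$ once and for all and then verifies each relation as an identity of composite special functions; you agree with the paper only for \eqref{prop52.2}--\eqref{prop52.3}, where both of you expand $J_s^2=J_s$. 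What your method buys is a uniform mechanism that makes the absorption relations \eqref{prop52.9} and \eqref{prop52.97} essentially trivial and explains \emph{why} the commutation and braid relations hold (the special functions simply do not interact outside the relevant maximal chain). What the paper's method buys is economy for \eqref{prop52.4}--\eqref{prop52.65}: once one knows the decomposition into summands and Corollary~\ref{cor33-1}, the braid relations are inherited wholesale from Proposition~\ref{prop51}\eqref{prop51.3} with no case analysis at all, whereas your direct verification of \eqref{prop52.5} on the chain $\{s,t\}$ still requires tracking which component indices $p,q$ make the product nonzero. Your column formula is a clean tool not stated explicitly in the paper (though it underlies the column-by-column computation of \eqref{prop52.9}); it would be worth recording separately.
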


\begin{proof}
Relations~\eqref{prop52.1} are clear. From Proposition~\ref{prop51}\eqref{prop51.1}, 
for $s\in\mathbf{K}'(Q)$, we have
\begin{displaymath}
\big(\bigoplus_{q=1}^{m_s}J_s^{(q)}\big)^2\cong 
\bigoplus_{q=1}^{m_s}(J_s^{(q)})^2\oplus 
\bigoplus_{q\neq q'}(J_s^{(q)}J_s^{(q')})
\cong\bigoplus_{q=1}^{m_s}J_s^{(q)}.
\end{displaymath}
Note that $(J_s^{(q)})^2$ is the only direct summand in the middle with support in $\Gamma^{(q)}\cup\{s\}$
as $(J_s^{(q)})^2\neq 0$ by Corollary~\ref{cor33-1}. Therefore we get  relations~\eqref{prop52.2} 
and \eqref{prop52.3}. 

By Proposition~\ref{prop51}\eqref{prop51.2}, 
for $s,t\in\mathbf{K}'(Q)$ in case there is no arrow between $s$ and $t$, we have
\begin{displaymath}
\big(\bigoplus_{q=1}^{m_s}J_s^{(q)}\big)\big(\bigoplus_{p=1}^{m_t}J_t^{(p)}\big)=
\big(\bigoplus_{p=1}^{m_t}J_t^{(p)}\big)\big(\bigoplus_{q=1}^{m_s}J_s^{(q)}\big).
\end{displaymath}
Opening brackets and matching summands with the same support on the left hand side and on the right hand side,
we get relations~\eqref{prop52.4}. Relations~\eqref{prop52.45} are obtained similarly from
\begin{displaymath}
\big(\bigoplus_{q=1}^{m_s}J_s^{(q)}\big)J_i=J_i\big(\bigoplus_{q=1}^{m_s}J_s^{(q)}\big)
\end{displaymath}
which is again given by Proposition~\ref{prop51}\eqref{prop51.2}.

Relations~\eqref{prop52.5}--\eqref{prop52.65} are obtained similarly from Proposition~\ref{prop51}\eqref{prop51.3}.

To prove relation~\eqref{prop52.9}, we compare $J_s^{(q)}J_t^{(p)}P_r$ with $J_t^{(p)}P_r$, for $r\in\{1,2,\dots,n\}$,
using \eqref{eq71} and a similar formula for $J_t^{(p)}P_r$, namely
\begin{equation}\label{eqnow1}
J_t^{(p)}P_r=
\begin{cases}
0, & \text{ if } r\not\in \Gamma^{(q)}\cup\{t\};\\
P_r, &\text{ if } r\in \Gamma^{(q)};\\
P_{t_p}, & \text{ if } r=t.
\end{cases}
\end{equation}
Here in the last line we identify $P_{t_p}$ with the corresponding submodule in $\mathrm{Rad}(P_t)$.
Note that the only $P_i$ which appear (up to isomorphism)
as direct summands of $J_t^{(p)}P_r$ are those for which we have  $i\in\mathrm{supp}(J_t^{(p)})\setminus\{t\}$. 
If $\mathrm{supp}(J_t^{(p)})\subset \mathrm{supp}(J_s^{(q)})$,
then $s\not\in \mathrm{supp}(J_t^{(p)})\setminus\{t\}$ and hence
$J_s^{(q)}P_i=P_i$ for such $i$ by \eqref{eqnow1}. This implies relation~\eqref{prop52.9}. 
Similarly one checks relations~\eqref{prop52.95} and \eqref{prop52.97}. This completes the proof.
\end{proof}

\subsection{The main results}\label{s8.4}

\begin{theorem}\label{thm54}
Any relation between elements in the generating set $\mathbf{Q}$ of the monoid $\mathcal{I}$ 
is a consequence of the relations given in Proposition~\ref{prop51}.
\end{theorem}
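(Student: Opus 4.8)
The plan is to recognize the relations of Proposition~\ref{prop51} as the defining relations of a Hecke-Kiselman monoid and then to rule out any further relations by exhibiting a faithful linear representation obtained through decategorification. First I would introduce the Hecke-Kiselman monoid $\mathrm{HK}(Q)$ associated with the quiver $Q$ in the sense of \cite{GM2}: it is the monoid generated by idempotents $e_1,\dots,e_n$ subject to $e_i^2=e_i$, to $e_ie_j=e_je_i$ whenever there is no edge between $i$ and $j$, and to $e_je_ie_j=e_ie_je_i=e_je_i$ whenever there is an arrow $i\to j$. These are precisely the relations of Proposition~\ref{prop51}\eqref{prop51.1}--\eqref{prop51.3}. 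Since $\mathbf{Q}=\{J_1,\dots,J_n\}$ generates $\mathcal{I}$ by Proposition~\ref{prop41}\eqref{prop41.1} and satisfies these relations by Proposition~\ref{prop51}, there is a surjective monoid homomorphism $\varphi:\mathrm{HK}(Q)\tto\mathcal{I}$ sending $e_i$ to $J_i$. The assertion of the theorem is exactly that $\varphi$ is an isomorphism, so the whole problem reduces to proving that $\varphi$ is injective.

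For injectivity I would pass to decategorification. As $A$ is hereditary, each dual projection functor $\mathrm{Dp}_I$ is exact by Lemma~\ref{lem6} and satisfies $\mathrm{Dp}_I\circ\mathrm{Dp}_J\cong\mathrm{Dp}_{IJ}$ by Corollary~\ref{cor7}. Hence sending $I$ to the induced endomorphism $[\mathrm{Dp}_I]$ of the Grothendieck group $[A\text{-}\mathrm{mod}]\cong\mathbb{Z}^n$ defines a monoid homomorphism $\psi:\mathcal{I}\to\mathrm{End}_{\mathbb{Z}}([A\text{-}\mathrm{mod}])$. Using the basis $\{[P_1],\dots,[P_n]\}$ together with Lemma~\ref{nnlem502}, the action $[\mathrm{Dp}_{J_i}]=[\mathrm{F}_i]$ of the generator $J_i$ is completely explicit: it fixes $[P_j]$ for $j\neq i$ and sends $[P_i]$ to $[\mathrm{Rad}(P_i)]=\sum_{i\to j}[P_j]$, the sum being over arrows out of $i$ in $Q$. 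I would then verify that the composite $\psi\circ\varphi:\mathrm{HK}(Q)\to\mathrm{End}_{\mathbb{Z}}([A\text{-}\mathrm{mod}])$ coincides, up to a change of basis, with the natural representation of $\mathrm{HK}(Q)$ introduced in \cite{GM2}.

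Finally, I would invoke the effectiveness result of \cite{Fo} (conjectured in \cite{GM2}): this representation of $\mathrm{HK}(Q)$ is faithful, that is, distinct elements of $\mathrm{HK}(Q)$ are represented by distinct linear transformations. Consequently the composite $\psi\circ\varphi$ is injective, and therefore $\varphi$ is injective as well. Combined with the surjectivity already established, this shows that $\varphi$ is an isomorphism, which is precisely the statement that every relation among the $J_i$ is a consequence of those in Proposition~\ref{prop51}.

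The main obstacle I anticipate is the middle step: matching the decategorified action computed above, on the nose, with the specific representation of the Hecke-Kiselman monoid that \cite{Fo} proves effective. The matrices of the $J_i$ are easy to write down, but one must ensure that the combinatorial representation appearing there is literally the one studied in \cite{GM2}, up to an isomorphism of representations, so that the effectiveness theorem applies verbatim rather than merely in spirit.
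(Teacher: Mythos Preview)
Your proposal is correct and follows essentially the same route as the paper: identify the relations of Proposition~\ref{prop51} as those of the Hecke-Kiselman monoid $\mathrm{HK}(Q)$, decategorify the action on projectives to obtain the natural linear representation, and invoke the effectiveness result of \cite{Fo} to conclude injectivity via a sandwich argument. The only cosmetic difference is that the paper decategorifies using the idealization functors $\mathrm{Su}_I$ on the split Grothendieck group $[A\text{-}\mathrm{proj}]_{\oplus}$ rather than the exact functors $\mathrm{Dp}_I$ on $[A\text{-}\mathrm{mod}]$, but since $\mathrm{Dp}_I$ and $\mathrm{Su}_I$ agree on projectives this yields the same matrices and the same appeal to \cite[Theorem~4.5]{Fo}.
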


\begin{proof}
Let $S$ be the abstract monoid given by generators $\mathbf{Q}$ and relations from  Proposition~\ref{prop51}.
Note that $S$ is a Hecke-Kiselman monoid in the sense of \cite{GM2}.
Then we have the canonical surjection $\psi:S\tto \mathcal{I}$.

For any $I\in\mathcal{I}$ the additive functor $\mathrm{Su}_I$ acting on $A\text{-}\mathrm{proj}$ defines
an endomorphism of the split Grothendieck group $[A\text{-}\mathrm{proj}]_{\oplus}$. This gives a homomorphism
$\varphi$ from $\mathcal{I}$ to the monoid of all endomorphisms of $[A\text{-}\mathrm{proj}]_{\oplus}$. Let
$T$ denote the image of $\varphi$. Combined together we have surjective composition $\varphi\circ\psi$
as follows: $S\tto \mathcal{I}\tto T$.

Consider the standard basis $\{[P_i]\,:\,i=1,2,\dots,n\}$ in $[A\text{-}\mathrm{proj}]_{\oplus}$. From
isomorphisms in \eqref{eq71}, for $i,j=1,2,\dots,m$ we obtain that $T$ is spanned by 
$\varphi(J_i)[P_j]$, where
\begin{displaymath}
\varphi(J_i)[P_j]=
\begin{cases}
[P_j],& i\neq j;\\ 
\displaystyle\sum_{i\to s}[P_s], & i=j. 
\end{cases}
\end{displaymath}
This is exactly the image of the linear representation of $S$ considered in \cite[Theorem~4.5]{Fo} 
where it was proved that the corresponding representation map is injective, that is $S\cong T$. Consequently, 
because of the sandwich position of $\mathcal{I}$ between $S$ and $T$, we obtain $S\cong \mathcal{I}$ 
and the proof is  complete.
\end{proof}

We note that in \cite{Gr} (see also \cite{Fo}) one finds a recursive description of a normal form for
elements of the Hecke-Kiselman monoid $S$ from the above proof of Theorem~\ref{thm54}. The same thus 
holds also for the monoid $\mathcal{I}$. A certain description of a normal form for elements in the 
monoid $\mathcal{I}^{\mathrm{ind}}$ follows from the proof of the following theorem which also
describes relations in $\mathcal{I}^{\mathrm{ind}}$.

\begin{theorem}\label{thm55}
Any relation between elements in the generating set $\mathbf{B}$ of the monoid $\mathcal{I}^{\mathrm{ind}}$ 
is a consequence of the relations given in Proposition~\ref{prop52}.
\end{theorem}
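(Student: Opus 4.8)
The plan is to mimic the structure of the proof of Theorem~\ref{thm54}, but now the situation is genuinely harder because the monoid presented by $\mathbf{B}$ together with the relations in Proposition~\ref{prop52} is \emph{not} a Hecke-Kiselman monoid, so we cannot simply invoke the effectiveness result of \cite{Fo} as a black box. Let $\widetilde{S}$ denote the abstract monoid defined by the generating set $\mathbf{B}$ and the relations of Proposition~\ref{prop52}. Since these relations hold in $\mathcal{I}^{\mathrm{ind}}$ by Proposition~\ref{prop52}, there is a canonical surjection $\widetilde{\psi}:\widetilde{S}\tto \mathcal{I}^{\mathrm{ind}}$. The entire task reduces to proving that $\widetilde{\psi}$ is injective, i.e. that $|\widetilde{S}|\leq |\mathcal{I}^{\mathrm{ind}}|$.

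The key idea is to relate $\widetilde{S}$ to the already-understood monoid $S=\mathcal{I}$ from Theorem~\ref{thm54}. Concretely, I would set up a map going the \emph{other} way in spirit: using the decomposition $J_s\cong\bigoplus_{q=1}^{m_s}J_s^{(q)}$ of \eqref{eq452}, each generator $J_s$ of $\mathcal{I}$ splits into the indecomposable pieces $J_s^{(q)}$ which are generators of $\widetilde S$. The relations of Proposition~\ref{prop52} were in fact \emph{derived} from those of Proposition~\ref{prop51} precisely by this bracket-opening and support-matching procedure (see the proof of Proposition~\ref{prop52}), so one expects a tight correspondence between words in $\mathbf{Q}$ and words in $\mathbf{B}$. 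The plan is to use the support data: by Corollary~\ref{cor33-1}, for a nonzero product the support is the intersection of the supports of the factors, and by Lemma~\ref{lem41}\eqref{lem41.2} each fibre $\mathbf{B}(\Gamma)$ of $\mathcal{I}^{\mathrm{ind}}$ over a special subtree $\Gamma$ has a unique maximal element $B_\Gamma$. I would first show, working purely inside the abstract monoid $\widetilde S$, that every element can be rewritten, using relations \eqref{prop52.2}--\eqref{prop52.65} and the annihilation relations \eqref{prop52.3}, \eqref{prop52.6}, \eqref{prop52.65}, \eqref{prop52.95}, into a canonical normal form indexed by a special function together with the combinatorial data recording which connected component each generator selects at a branch vertex.

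The core counting step is then to show that distinct normal forms give distinct elements of $\mathcal{I}^{\mathrm{ind}}$, and this I would establish by decategorification exactly as in Theorem~\ref{thm54}: the action of each $J_s^{(q)}$ on the split Grothendieck group $[A\text{-}\mathrm{proj}]_\oplus$ is governed by formula \eqref{eqnow1}, which refines \eqref{eq71} by remembering the component index $q$. I would argue that the induced linear representation of $\widetilde S$ separates the normal forms, reducing to the effectiveness statement for $S=\mathcal{I}$ proved via \cite{Fo} once one tracks how the component-refined action on the $[P_r]$ basis records exactly the same information as the normal form. \textbf{The hard part will be} the combinatorics at the branch vertices $s\in\mathbf{K}'(Q)$: unlike the type~$A$ case, a single Hecke-Kiselman generator $J_s$ is now replaced by several commuting-up-to-annihilation idempotents $J_s^{(1)},\dots,J_s^{(m_s)}$, and one must verify that relations \eqref{prop52.3} and \eqref{prop52.6} (the ``orthogonality'' relations killing products of distinct components at a sink or source) are \emph{sufficient}, together with the rest, to force any two words representing the same element to be identical in normal form. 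In other words, the main obstacle is proving that no additional, non-obvious relation is needed to cut $\widetilde S$ down to the correct size, which is precisely where the support-intersection description of Corollary~\ref{cor33-1} and the uniqueness of maximal elements in Lemma~\ref{lem41}\eqref{lem41.2} must be leveraged to control the confluence of the rewriting system.
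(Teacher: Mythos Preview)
Your overall strategy matches the paper's: define the abstract monoid $\widetilde{S}$, establish the surjection onto $\mathcal{I}^{\mathrm{ind}}$, rewrite every word into a normal form governed by the support set $\Omega$ (the intersection of supports of all split symbols occurring in the word, exactly as Corollary~\ref{cor33-1} predicts), and then separate normal forms via the Grothendieck group action. The paper carries out precisely this programme, including the case analysis $\Omega=\varnothing$, $|\Omega|=1$, $|\Omega|\geq 2$ that forces $w=0$ in the first two cases and produces a clean normal form $u$ in the third.

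There is, however, a genuine gap in your final reduction step. You propose to conclude by ``reducing to the effectiveness statement for $S=\mathcal{I}$ proved via \cite{Fo}'', but this does not go through as stated. The faithfulness of the $\mathcal{I}$-action on $[A\text{-}\mathrm{proj}]_\oplus$ tells you only that the composite $\widetilde{S}\tto\mathcal{I}^{\mathrm{ind}}\hookrightarrow\mathrm{End}([A\text{-}\mathrm{proj}]_\oplus)$ has injective second arrow; it says nothing about the first. Moreover the linear operators attached to the split generators $J_s^{(q)}$ via \eqref{eqnow1} annihilate every $[P_r]$ with $r\notin\Gamma^{(q)}\cup\{s\}$, so they are not among the operators produced by the $J_i$'s in the representation of \cite{Fo}; you cannot embed $\widetilde{S}$ into the tree Hecke-Kiselman monoid this way.

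The paper closes this gap by an extra combinatorial reduction \emph{after} the normal form is reached. Once $u$ has been normalized so that every remaining split symbol $J_s^{(q)}$ has $\deg_\Omega(s)=1$ and appears exactly once, and every ordinary symbol $J_t$ has $t\in\Omega$, one passes to $\Omega':=\Omega\setminus\mathbf{K}'(Q)$. Admissibility of $Q$ forces $\Omega'$ to be a disjoint union of type~$A$ quivers $\Gamma_1,\dots,\Gamma_m$, and the normalized word factors as $u=u_1\cdots u_m$ with pairwise commuting factors, where each $u_r$ lies in the submonoid $S_r$ generated by the $J_i$ and $J_s^{(q)}$ with $i,s\in\Gamma_r$. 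The relations \eqref{prop52.1}, \eqref{prop52.2}, \eqref{prop52.45}, \eqref{prop52.55}, \eqref{prop52.56} show that $S_r$ satisfies the defining relations of a type~$A$ Hecke-Kiselman monoid (the two split symbols at the ends of $\Gamma_r$ playing the role of the extremal generators). The paper then invokes the type~$A$ effectiveness from \cite[Subsection~3.2]{GM2} separately on each $S_r$, which is exactly what is needed: two words $u_r,u'_r$ with the same Grothendieck group action are equivalent in $S_r$, hence in $\widetilde{S}$. So the correct target for the final effectiveness appeal is a product of type~$A$ Hecke-Kiselman monoids indexed by the components of $\Omega'$, not the single tree monoid $\mathcal{I}$.
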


\begin{proof}
Let $S$ be the abstract monoid given by generators $\mathbf{X}:=\mathbf{B}\cup\{0\}$ and relations 
from Proposition~\ref{prop52} together with the obvious relations defining $0$ as the zero element.
As usual, we denote by $\mathbf{X}^+$ the set of all non-empty words in the alphabet $\mathbf{X}$.
Let $\psi:S\to \mathcal{I}^{\mathrm{ind}}$ be the obvious canonical surjection.
Our aim is to prove injectivity of $\psi$. For this we will describe certain 
representatives in the fibers of the canonical map $\tau:\mathbf{X}^+\to S$.

For simplicity, we call all elements in $\mathbf{X}$ of the form $J_s^{(q)}$, for $s\in\mathbf{K}'(Q)$
and $q\in\{1,2,\dots,m_s\}$, the {\em split symbols}. For $w\in \mathbf{X}^+$, let 
$J_{s_1}^{(q_1)},J_{s_2}^{(q_2)},\dots,J_{s_k}^{(q_k)}$ be the list of all split symbols 
which appear in $w$. If $w$ has no split symbols, we set $\Omega=Q$. If $w$ contains $0$, we set
$\Omega=\varnothing$. Otherwise, set
\begin{displaymath}
\Omega:=\bigcap_{i=1}^k\mathrm{supp}(J_{s_i}^{(q_i)}).
\end{displaymath}
Note that $\Omega$ is not an invariant of a fiber of $\tau$ in general. 

If $\Omega=\varnothing$, then either $w$ contains $0$ or 
the fact that $Q$ is a tree implies existence of $i,j\in \{1,2,\dots,k\}$ such that
$\mathrm{supp}(J_{s_i}^{(q_i)})\cap \mathrm{supp}(J_{s_j}^{(q_j)})=\varnothing$. We claim that in the latter 
case $w=0$ in $S$. Without loss of generality we may assume that the indices $i$ and $j$ and the word $w$ 
(in its equivalence class) are chosen such that $w=xJ_{s_i}^{(q_i)}yJ_{s_j}^{(q_j)}z$ 
with $y$ shortest possible. Assume that $y\neq 0$ and that
there is a $J_r$ with $r\in \mathrm{supp}(J_{s_j}^{(q_j)})$ 
in $y$. Take the leftmost occurrence of such element in $y$.
Now we may use relations in Proposition~\ref{prop51}\eqref{prop51.2} to move it
past all $J_q$ with $q\not\in \mathrm{supp}(J_{s_j}^{(q_j)})$.  Note that, by the minimality of $y$ and 
relations in Proposition~\ref{prop52}\eqref{prop52.4}, \eqref{prop52.45} and \eqref{prop52.9},
there is no $J_{s_a}^{(q_a)}$ between $J_{s_i}^{(q_i)}$ and $J_r$, such that $r$ and $s_a$
are connected by an arrow. So $J_r$ commutes with any split symbol between $J_{s_i}^{(q_i)}$ and
$J_r$ and so we can move it past $J_{s_i}^{(q_i)}$ making $y$ shorter. Therefore $y$
cannot contain any $J_r$ with $r\in \mathrm{supp}(J_{s_j}^{(q_j)})$.

Similarly, $y$ does not contain any $J_r$ with $r\in \mathrm{supp}(J_{s_i}^{(q_i)})$. Analogously 
(using also Proposition~\ref{prop52}\eqref{prop52.3}) one shows that $y$ does not contain any
split symbol $J_r^{(f)}$ for $r\in \mathrm{supp}(J_{s_i}^{(q_i)})\cup \mathrm{supp}(J_{s_j}^{(q_j)})$. 
Using similar arguments, 
it follows that $y$ may contain only elements $J_r$ 
where $r$ belong to the unique (unoriented) path between $s_i$ and $s_j$. Moreover, to avoid application of
a similar argument, all vertices from this path must occur. But then one can use, if necessary, 
relations in Proposition~\ref{prop52}\eqref{prop52.97} to make $y$ shorter. Hence $y$ is empty and we may use
relations in Proposition~\ref{prop52}\eqref{prop52.95} to conclude that $w=0$.

The case when $\Omega$ has only one vertex is dealt with similarly using 
relations in Proposition~\ref{prop52}\eqref{prop52.3}, \eqref{prop52.6} and \eqref{prop52.65}
and also results in $w=0$.

If $\Omega$ has at least two vertices, note that, for any split symbol $J_{s_i}^{(q_i)}$ in $w$, we have
$\mathrm{deg}_{\Omega}(s_i)\leq \mathrm{deg}_{\mathrm{supp}(J_{s_i}^{(q_i)})}(s_i)=1$.
Then a similar commutation and deleting procedure as above combined with the relations in
Proposition~\ref{prop52}\eqref{prop52.9} shows that $w$ can be changed to an equivalent word $u$ with the property
that the only split symbols in $u$ are those $J_{s_i}^{(q_i)}$ for which $s_i\in\mathbf{K}'(Q)$ and
$\deg_{\Omega}(s_i)=1$, moreover, each of them occurs exactly once. Furthermore, one can use
relations in Proposition~\ref{prop52}\eqref{prop52.97} and \ref{prop52}\eqref{prop52.45} 
to ensure that $u$ contains only $J_t$ for $t\in\Omega$. 

Let $\Omega'$ be the full subgraph of $\Omega$ with vertex set $\Omega\setminus \mathbf{K}'(Q)$. 
If $\Omega'$ is empty, then the above implies that $u$ is a product of split symbols
$J_{s_i}^{(q_i)}$ for which $s_i\in\mathbf{K}'(Q)$ and $\deg_{\Omega}(s_i)=1$. 
If $\Omega$ has two vertices, they are necessarily connected and  
we can use relation Proposition~\ref{prop52}\eqref{prop52.5} to see that there are exactly two possibilities for 
$u$, namely $J_{s_1}^{(q_1)}J_{s_2}^{(q_2)}$ and $J_{s_2}^{(q_2)}J_{s_1}^{(q_1)}$. These two elements are
different in $\mathcal{I}^{\mathrm{ind}}$ since from \eqref{eq71} it follows that their actions on 
$P_{s_1}\oplus P_{s_2}$ are different. Using these actions, we also see that these two elements 
differ from $0$ in $S$.  If $\Omega$ has more than two vertices, then all factors in 
$u$ commute and thus define $u$ uniquely.

It remains to consider the case when $\Omega'$ is non-empty. 
Since $Q$ is admissible, $\Omega'$ is a disjoint union of graphs of the form \eqref{eq2}. Let
$\Gamma_1,\Gamma_2,\dots,\Gamma_m$ be the connected components of $\Omega'$. Using relations given by 
Proposition~\ref{prop52}\eqref{prop52.4} and \eqref{prop52.45},  we can write
$u=u_1u_2\dots u_m$, where each element $u_r$, for $r=1,2,\dots,m$, 
is a product of $J_i$ or $J_s^{(q)}$ with $i,s\in \Gamma_r$
such that $\mathrm{deg}_{\Gamma_r}(i)=2$ and  $\mathrm{deg}_{\Omega}(s)=1$.
We also have that all factors $u_r$ commute with each other. It remains to show that, if
$u'=u'_1u'_2\dots u'_m$ is another word with similar properties which defines the same element
in $\mathcal{I}^{\mathrm{ind}}$, then, up to permutation of factors, we have that 
$u_r$ is equivalent to $u'_r$ in $S$ for each $r$.

For a fixed $r$, we are in the situation of the quiver \eqref{eq2}.
The relations from Proposition~\ref{prop52}\eqref{prop52.1}, \eqref{prop52.2}, \eqref{prop52.45},
\eqref{prop52.55} and \eqref{prop52.56} guarantee that the $J_i$'s and $J_s^{(q)}$'s,
where $i,s\in \Gamma_r$, satisfy all relations for the corresponding Hecke-Kiselman monoid 
of type $A$ as defined in \cite{GM2}. Let $S_r$ be the submonoid of $S$ generated by 
$J_i$ and $J_s^{(q)}$, where $i,s\in \Gamma_r$. 
Now we can proceed similarly to the proof of Theorem~\ref{thm54}. 

Consider the standard basis $\{[P_i]\,:\,i=1,2,\dots,n\}$ in $[A\text{-}\mathrm{proj}]_{\oplus}$.
The action of $\mathcal{I}^{\mathrm{ind}}$ on $A\text{-}\mathrm{proj}$ induces a homomorphism 
from $\mathcal{I}^{\mathrm{ind}}$ to the monoid $\mathrm{End}([A\text{-}\mathrm{proj}]_{\oplus})$.
This induces a representation of $S_r$ in $\mathrm{End}([A\text{-}\mathrm{proj}]_{\oplus})$.
From \cite[Subsection~3.2]{GM2} it follows that the latter representation map is injective. 
This means that, in the situation above,  $u_r$ is equivalent to $u'_r$ in $S$
and hence the statement of the theorem follows.
\end{proof}

We do not know whether the systems of relations described in Theorems~\ref{thm54} and \ref{thm55} 
are minimal or not.

\vspace{1mm}

\noindent
A.-L.~G.: Faculty of Mathematics, Bielefeld University
PO Box 100 131, D-33501, Bielefeld, Germany, 
{\tt apaasch\symbol{64}math.uni-bielefeld.de}
\vspace{0.1cm}

\noindent
V.~M.: Department of Mathematics, Uppsala University,
Box 480, SE-75106, Uppsala, SWEDEN,  {\tt mazor\symbol{64}math.uu.se}; 
http://www.math.uu.se/{\small$\sim$}mazor/
\vspace{0.1cm}

\end{document}